\theoremstyle{definition}
\newtheorem{defi}{Definition}[section]
\newtheorem{thm}[defi]{Theorem}
\newtheorem{lem}[defi]{Lemma}
\newtheorem{cor}[defi]{Corollary}
\newtheorem{cla}[defi]{Claim}
\newtheorem{rem}[defi]{Remark}
\newtheorem{prob}[defi]{Problem}
\newcommand{\BL}{\mathsf{BL}}
\newcommand{\CW}{\mathsf{CW}}
\newcommand{\FI}{\mathsf{FI}}
\newcommand{\FH}{\mathsf{FH}}
\newcommand{\FIFD}{\mathsf{FIFD}}
\newcommand{\GL}{\mathbf{GL}}
\newcommand{\PA}{\mathbf{PA}}
\newcommand{\NQGL}{\mathbf{NQGL}}
\newcommand{\QK}{\mathbf{QK}}
\newcommand{\QKf}{\mathbf{QK4}}
\newcommand{\QGL}{\mathbf{QGL}}
\newcommand{\prove}{\vdash}
\newcommand{\Lob}{L\"{o}b}
\newcommand{\Smor}{Smory\'{n}ski}
\title{Fixed-point properties for predicate modal logics}
\author{Sohei Iwata and Taishi Kurahashi}
\date{}
\begin{document}

\maketitle

\begin{abstract}
It is well known that the propositional modal logic $\GL$ of provability satisfies the de Jongh-Sambin fixed-point property. On the other hand, Montagna showed that the predicate modal system $\QGL$, which is the natural variant of $\GL$, loses the fixed-point property. 
In this paper, we discuss some versions of the fixed-point property for predicate modal logics. 
First, we prove that several extensions of $\QGL$ including $\NQGL$ do not have the fixed-point property. 
Secondly, we prove the fixed-point theorem for the logic $\QK + \Box^{n+1} \bot$. 
As a consequence, we obtain that the class $\FH$ of Kripke frames which are transitive and finite height satisfies the fixed-point property locally. 
We also show the failure of the Craig interpolation property for $\NQGL$.
Finally, we give a sufficient condition for formulas to have a fixed-point in $\QGL$.
\end{abstract}

\section{Introduction}

The propositional modal system $\GL$ is obtained from the smallest normal modal logic $\mathbf{K}$ by adding the axiom schema $\Box ( \Box A \to A) \to \Box A$. The modal system $\GL$ is well known as \textit{the logic of provability}, since it has the connection with arithmetical theories, for instance, Peano Arithmetic $\PA$ (cf. Solovay \cite{Sol76}).

One of the fundamental results about the logic of provability is the de Jongh-Sambin fixed-point theorem which is a natural counterpart of the fixed-point lemma in arithmetic (cf.~\cite{Sam76}). 
Let $A(p)$ be a propositional modal formula. We say $A(p)$ is modalized in $p$ if all occurrences of the propositional variable $p$ in $A(p)$ are within the scope of the modal operator. The de Jongh-Sambin fixed-point theorem states that if $A(p)$ is modalized in $p$, then there is a propositional modal formula $B$ containing only propositional variables occurring in $A(p)$, not containing $p$, and such that $\GL \prove B \leftrightarrow A(B)$.
The fixed-point theorem also holds for the logic $\mathbf{K} + \Box^{n+1} \bot$ which is due to Sacchetti \cite{Sac99}.

It is natural to extend these studies to predicate modal logic. 
However, the situation of the predicate logic of provability is quite complex and most of the properties for $\GL$ do not hold for the predicate modal system $\QGL$ which is the natural extension of $\GL$. In particular, Montagna \cite{Mon84} showed that $\QGL$ does not satisfy any of the Kripke completeness, the arithmetical completeness, and the de Jongh-Sambin fixed-point property. 

On the other hand, there is a room for investigations of the fixed-point property in predicate modal logic. 
Although the logic $\QGL$ is a natural candidate of an extension of $\GL$, it is not the only one. 
For example, recently Tanaka \cite{Tana18} introduced a new predicate modal logic $\NQGL$ which is strictly stronger than $\QGL$.
The fixed-point theorem may hold in one of these natural extensions of $\QGL$. 
Also it has not been known whether the fixed-point theorem for $\mathbf{K}+\Box^{n+1} \bot$ can be extended to predicate modal logic. 
In this paper, we investigate some versions of the fixed-point property for predicate modal logics. 

In Section 2, we introduce predicate modal logics and Kripke semantics, and define the following five classes of Kripke frames in which all theorems of $\QGL$ are valid: 
$\CW$ (the class of transitive and conversely well-founded frames), $\BL$ (the class of transitive frames of which is bounded length), $\FH$ (the class of transitive frames with finite height), $\FI$ (the class of finite transitive irreflexive frames), and $\FIFD$ (the class of finite transitive irreflexive frames of which domains are finite).
The class $\BL$ is introduced by Tanaka \cite{Tana18}, and he showed that $\NQGL$ is Kripke complete with respect to $\BL$.
The class $\FIFD$ was investigated by Artemov and Japaridze \cite{AJ90}. 

We investigate two semantical fixed-point properties for classes of frames, that is, the fixed-point property and the local fixed-point property. It follows that, by Montagna's proof, the classes $\CW$ and $\BL$ do not enjoy neither the local fixed-point property nor the fixed-point property. In Sections 3 and 4, we discuss whether the classes $\FH$, $\FI$ and $\FIFD$ enjoy these two properties. 
In Section 3, we prove that the classes $\FH$, $\FI$ and $\FIFD$ do not enjoy the fixed-point property. 
In Section 4, we prove the fixed-point theorem for the predicate modal logic $\QK + \Box^{n+1} \bot$. 
We stress that our proof provides an algorithm for calculating fixed-points in these logics. 
As a consequence, we show that the classes $\FH$, $\FI$ and $\FIFD$ enjoy the local fixed-point property. 
This shows that the logics determined by these classes are consistent with the fixed-point property (cf. Sacchetti \cite{Sac99}).
Table 1 summarizes the situation of these semantical fixed-point properties. 

In Section 5, we prove that $\NQGL$ does not enjoy the Craig interpolation property. 
This is a consequence of our result proved in Section 4. 
As mentioned above, the de Jongh-Sambin fixed-point theorem does not hold for $\QGL$. 
Although there is a possibility that the fixed-point theorem holds for some classes of formulas, it has not been known sufficient (and necessary) conditions for a formula to have a fixed-point in $\QGL$. 
In Section 6, we argue a sufficient condition for a formula $A(p)$ to have a fixed-point in $\QGL$. We prove that, if $A(p)$ is a Boolean combination of $\Sigma$-formulas, then $A(p)$ has a fixed-point in $\QGL$.

\begin{table}[ht]
	\begin{center}
		\caption{Five classes and the fixed-point properties}
		\begin{tabular}{ccc}
		\toprule class  & FPP & localFPP \\ \midrule
		$\FIFD$ & No & Yes \\
		$\FI$ & No & Yes \\
		$\FH$ & No & Yes \\ 
		$\BL$ & No & No \\
		$\CW$ & No & No \\ \bottomrule
		\end{tabular}
	\end{center}
\end{table}

\section{Preliminaries}

\subsection{Predicate modal logic and its Kripke semantics}\label{Sec2-1}

The language of predicate modal logic $\mathcal{L}$ consists of countably many variables $u, v, \ldots$, etc., Boolean constants $\top, \bot$, Boolean connectives $\neg, \to$, quantifier $\forall$, and countably many predicate symbols for each arity (denoted by $P, Q, \ldots$ etc.). An $\mathcal{L}$-formula $A$ is constructed as the following manner:
\begin{equation*}
A ::= \top \mid \bot \mid P(u_1, \ldots, u_n) \mid \neg A \mid A \to A \mid \forall u A \mid \Box A
\end{equation*}
where $P$ is an $n$-ary predicate symbol, and $u_1, \ldots, u_n, u$ are variables.
Let $\Box^{n} A :\equiv \overbrace{\Box \cdots \Box}^{n} A$, and $\boxdot A :\equiv \Box A \land A$.

Boolean constants $\top$ and $\bot$, and $\mathcal{L}$-formulas of the form $P(u_1, \ldots, u_n)$ are called \textit{atomic formulas}. We put 
\begin{gather*}
A \lor B :\equiv \neg A \to B, \quad A \land B :\equiv \neg ( A \to \neg B), \quad A \leftrightarrow B :\equiv (A \to B) \land (B \to A), \\
\exists u A :\equiv \neg \forall u \neg A, \quad \Diamond A :\equiv \neg \Box \neg A.
\end{gather*}

Free variables and bound variables are naturally defined. We say $A$ is an $\mathcal{L}$-sentence if $A$ is an $\mathcal{L}$-formula with no free variables.

The predicate modal system $\QK$ consists of the following axioms and rules:
\begin{description}
\item[Ax1] All instances of axioms of predicate logic in the language $\mathcal{L}$;
\item[Ax2] $\Box ( A \to B ) \to ( \Box A \to \Box B )$;
\item[R1] $A,\ A \to B / B$ (\textit{modus ponens});
\item[R2] $A / \Box A$ (\textit{necessitation}).
\end{description}

The predicate modal systems $\QKf$ and $\QGL$ are obtained from $\QK$ by adding the following axioms $\mathbf{4}$, and \textbf{\Lob}, respectively.
\begin{description}
\item[4] $\Box A \to \Box \Box A$;
\item[\Lob] $\Box ( \Box A \to A ) \to \Box A$. 
\end{description}

Recall that $\QK \subseteq \QKf \subseteq \QGL$.

\begin{defi}[Kripke frames]
A \textit{Kripke frame} $\mathcal{F}$ is a triple $\langle W, \prec, {\{D_w \}}_{w \in W} \rangle$ where:
	\begin{itemize}
	\item $W$ is a non-empty set;
	\item $\prec$ is a binary relation on $W$;
	\item Each $D_w$ is a non-empty set, and if $w \prec w'$, then $D_{w} \subseteq D_{w'}$.
	\end{itemize}
\end{defi}

\begin{defi}[Interpretations and Kripke models]
Let $\mathcal{F} = \langle W, \prec , {\{ D_w \}}_{w \in W} \rangle$ be a Kripke frame. \textit{An interpretation of $\mathcal{F}$} is a mapping $\Vdash$ which assigns each pair $\langle w, P \rangle$, where $w \in W$ and $P$ is an $n$-ary predicate symbol, into an $n$-ary relation on $D_w$. We write $w \Vdash P(a_1, \ldots, a_n)$ if $( a_1, \ldots, a_n )$ is a member of $\Vdash \langle w, P \rangle$. \textit{A Kripke model} $\mathcal{M}$ is a pair $\langle \mathcal{F},\Vdash \rangle$ where $\mathcal{F}$ is a Kripke frame and $\Vdash$ is an interpretation of $\mathcal{F}$.
\end{defi}

\begin{defi}[Truth value]
Let $\mathcal{M} = \langle W, \prec, {\{ D_w \}}_{w \in W}, \Vdash \rangle$ be a Kripke model, and $A$ be an $\mathcal{L}$-sentence with parameters from $D_w$ for some $w \in W$. \textit{The truth value of $A$ in $w$} (We write $\mathcal{M}, w \models A$ if $A$ is true in $w$) is inductively defined as follows:
	\begin{itemize}
	\item $\mathcal{M}, w \models \top$ and $\mathcal{M}, w \not \models \bot$, for every $w \in W$;
	\item $\mathcal{M}, w \models P(a_1, \ldots a_n)$ iff $w \Vdash P(a_1, \ldots a_n)$;
	\item $\mathcal{M}, w \models \neg A$ iff $\mathcal{M}, w \not \models A$;
	\item $\mathcal{M}, w \models A \to B$ iff $\mathcal{M}, w \not\models A$ or $\mathcal{M}, w \models B$;
	\item $\mathcal{M}, w \models \forall u A(u)$ iff $\mathcal{M}, w \models A(a)$ for every $a \in D_w$;
	\item $\mathcal{M}, w \models \Box A$ iff for any $v \in W$, if $w \prec v$, then $\mathcal{M}, v \models A$.
	\end{itemize}
\end{defi}

\begin{defi}[Validity]
Let $\mathcal{M}$ be a Kripke model and $A$ be an $\mathcal{L}$-sentence. We say \textit{$A$ is valid in $\mathcal{M}$} (write $\mathcal{M} \models A$) if for every $w \in W$, $\mathcal{M}, w \models A$.

Let $\mathcal{F}$ be a Kripke frame and $A$ be an $\mathcal{L}$-sentence. We say \textit{$A$ is valid in $\mathcal{F}$} (write $\mathcal{F} \models A$) if for any interpretation $\Vdash$ of $\mathcal{F}$, $A$ is valid in $\mathcal{M} = \langle \mathcal{F}, \Vdash \rangle$.
\end{defi}

Validity of an $\mathcal{L}$-formula $A$ is defined by the validity of the universal closure of $A$.

Next we specify several classes of Kripke frames. Let $\mathcal{F} = \langle W, \prec , \{ D_w \}_{w \in W} \rangle$ be a Kripke frame.
We say \textit{$\mathcal{F}$ is finite} if $W$ is finite. A Kripke frame $\mathcal{F}$ is \textit{conversely well-founded} if there is no countably infinite sequence $(w_i)_{i < \omega}$ of worlds of $W$ satisfying $w_i \prec w_{i+1}$ for each $i < \omega$.

Suppose that $\mathcal{F}$ is conversely well-founded. For each $w \in W$, the \textit{height of $w$} (write $h(w)$) is defined inductively by: 
$$h(w) := \sup \{ h(v) +1 : w \prec v \} .$$
(In particular, $\sup \emptyset = 0$.) 
A Kripke frame $\mathcal{F}$ is \textit{of bounded length} if for any $w \in W$, $h(w)$ is finite.
For a Kripke frame $\mathcal{F}$, \textit{the height of $\mathcal{F}$} is defined by $\sup \{ h(w) : w \in W \}$, and $\mathcal{F}$ is said to be \textit{finite height} if $h(\mathcal{F})$ is finite. 

We define the following five classes of Kripke frames:
\begin{enumerate}
\item $\CW := \{ \mathcal{F} \mid \mathcal{F}$ is transitive and conversely well-founded$\}$;
\item $\BL := \{ \mathcal{F} \mid \mathcal{F}$ is transitive and of bounded length$\}$;
\item $\FH := \{ \mathcal{F} \mid \mathcal{F}$ is transitive and finite height$\}$;
\item $\FI := \{\mathcal{F} \mid \mathcal{F}$ is finite, transitive and irreflexive$\}$;
\item $\FIFD := \{ \mathcal{F} \mid \mathcal{F}$ is finite, transitive and irreflexive, and for every $w \in  W$, $D_{w}$ is finite$\}$.
\end{enumerate}

For a class $\mathsf{C}$ of Kripke frames, $\mathbf{MQ}(\mathsf{C})$ denotes the set of all $\mathcal{L}$-formulas which are valid in any $\mathcal{F}$ in $\mathsf{C}$. It is easy to show that $\QGL \subseteq \mathbf{MQ}(\mathsf{CW})$. Since $\mathsf{FIFD} \subseteq \FI \subseteq \FH \subseteq \BL \subseteq \CW$, we obtain
\[
	\QGL \subseteq \mathbf{MQ}(\CW) \subseteq \mathbf{MQ}(\BL) \subseteq \mathbf{MQ}(\FH) \subseteq \mathbf{MQ}(\FI) \subseteq \mathbf{MQ}(\FIFD).
\] 
The class $\BL$ is introduced by Tanaka \cite{Tana18}. 

It is easy to show $\mathbf{MQ}(\BL) = \mathbf{MQ}(\FH)$. For, if $A \not\in \mathbf{MQ}(\BL)$, then there exsist a model $\mathcal{M} = \langle W, \prec, {\{ D_w \}}_{w \in W}, \Vdash \rangle$ and $w \in W$ such that $\langle W, \prec, {\{ D_w \}}_{w \in W} \rangle \in \BL$ and $\mathcal{M} ,w \not\models A$. Let $\mathcal{M}^\ast$ be the generated submodel of $\mathcal{M}$ by $w$. Then the frame of $\mathcal{M}^\ast$ is finite height, and $\mathcal{M}^\ast, w \not\models A$. Hence, $A \not\in \mathbf{MQ}(\FH)$.

Tanaka also introduced the modal proof system $\NQGL$ which has an infinitary inference rule, and showed that $\NQGL$ is Kripke complete with respect to $\BL$.

\begin{defi}[The system $\NQGL$, \cite{Tana18}]
The system $\NQGL$ is obtained from $\QKf$ by adding the following rule:
	\begin{description}
	\item[BL] If $\prove \Box^{n+1} \bot \to A$ for all natural numbers $n$, then $\prove A$. 
	\end{description}
\end{defi}
\begin{thm}[Tanaka \cite{Tana18}]\label{Tana}
$\NQGL = \mathbf{MQ}(\BL) \left(= \mathbf{MQ}(\FH) \right)$.
\end{thm}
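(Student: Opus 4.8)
The plan is to establish the two directions of the identity $\NQGL = \mathbf{MQ}(\BL)$ separately, treating soundness and completeness. Since the excerpt already reduces $\mathbf{MQ}(\BL)$ to $\mathbf{MQ}(\FH)$, I may freely work with finite-height frames when that is more convenient, but the cleanest argument keeps $\BL$ on the semantic side throughout.

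For soundness, i.e.\ $\NQGL \subseteq \mathbf{MQ}(\BL)$, I would argue by induction on the length of derivations in $\NQGL$. Since $\NQGL$ extends $\QKf$ by the single rule \textbf{BL}, I first check that every axiom of $\QKf$ is valid in each frame of $\BL$ and that the rules \textbf{R1} and \textbf{R2} preserve validity; the only nonroutine points are that axiom \textbf{4} holds because each frame in $\BL$ is transitive, and that validity is preserved under necessitation, which is immediate from the truth clause for $\Box$. The genuinely new case is the rule \textbf{BL}: I must show that if $\Box^{n+1}\bot \to A$ is valid in every $\BL$-frame for all $n$, then $A$ itself is valid. Here I would fix a model $\mathcal{M}$ on a $\BL$-frame and a world $w$; since the frame is of bounded length, $h(w)$ is some finite number $n$, and a short induction on height shows $\mathcal{M}, w \models \Box^{n+1}\bot$. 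Combined with the hypothesis instantiated at this $n$, this yields $\mathcal{M}, w \models A$, so $A$ is valid.

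For completeness, i.e.\ $\mathbf{MQ}(\BL) \subseteq \NQGL$, I would prove the contrapositive: if $\NQGL \not\vdash A$, then $A$ fails in some $\BL$-frame. The natural route is a canonical-model style construction. First I would observe that $\NQGL \not\vdash A$ forces, via the rule \textbf{BL} read contrapositively, the existence of some $n$ with $\NQGL \not\vdash \Box^{n+1}\bot \to A$; this is the pivotal use of the infinitary rule, converting an unprovability fact about $A$ into unprovability of a bounded-height approximation. I would then build a suitable $\QKf$-consistent set containing $\neg(\Box^{n+1}\bot \to A)$, hence containing $\Box^{n+1}\bot$ and $\neg A$, and construct a Kripke model refuting $A$ from maximal consistent sets in the usual Henkin style, adding witnesses for existential quantifiers to secure the truth lemma. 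The presence of $\Box^{n+1}\bot$ in the root forces every $\prec$-chain from the root to have length at most $n$, so the generated submodel is of bounded length and transitivity comes from axiom \textbf{4}; thus the refuting model lies in $\BL$.

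The main obstacle will be the completeness direction, and specifically the canonical-model construction for a \emph{predicate} modal logic. Two issues demand care. First, the Kripke frames here require nested domains ($w \prec w'$ implies $D_w \subseteq D_{w'}$), so the Henkin construction must arrange the domains coherently along the accessibility relation and verify the quantifier clause of the truth lemma with the correct domain at each world. Second, conversely well-foundedness (indeed bounded length) is not a first-order frame condition captured by a single modal axiom, so I cannot simply invoke a standard canonical model; instead the bound must be injected by hand through the formula $\Box^{n+1}\bot$ sitting at the root, and I would need to verify carefully that this genuinely caps the height of the generated submodel. This is exactly where Tanaka's original argument does the real work, and I would follow that structure, citing \cite{Tana18} for the detailed verification of the truth lemma.
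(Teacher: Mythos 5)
The paper does not prove this statement: Theorem \ref{Tana} is imported wholesale from Tanaka \cite{Tana18} (whose own argument goes through a cut-free sequent calculus), and the only piece the paper argues itself is the parenthetical identity $\mathbf{MQ}(\BL) = \mathbf{MQ}(\FH)$, via generated submodels, in the paragraph preceding the theorem. So there is no in-paper proof to compare against; judged on its own terms, your plan is sound and is the standard canonical-model route rather than Tanaka's proof-theoretic one. The soundness half is complete as written: the height induction showing $\mathcal{M},w\models\Box^{h(w)+1}\bot$ on any $\BL$-frame is exactly the right way to validate the rule \textbf{BL}, and the remaining axioms and rules are routine. In the completeness half you correctly isolate the two load-bearing moves: reading \textbf{BL} contrapositively to extract a single $n$ with $\NQGL\nvdash\Box^{n+1}\bot\to A$, and then passing to \emph{$\QKf$-consistency} of $\{\Box^{n+1}\bot,\neg A\}$ (legitimate since $\QKf\subseteq\NQGL$). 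The second move is more important than you let on: it is what lets you run a Lindenbaum/Henkin construction over a finitary system, sidestepping the fact that maximal consistent sets for a logic with an infinitary rule are not produced by the usual chain argument. You also correctly observe that $\Box^{n+1}\bot$ at the root, together with transitivity from axiom \textbf{4}, caps the height of the generated submodel, so the countermodel lands in $\FH\subseteq\BL$. What remains undone is the truth lemma for the predicate canonical model with expanding domains; you flag this and defer to \cite{Tana18}, which is acceptable given that the paper itself defers the entire theorem, but be aware that this deferred verification is where most of the actual work of the completeness direction lives.
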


By Theorem \ref{Tana}, we obtain $\QGL \subseteq \NQGL$.

\subsection{Fixed point properties}\label{Sec22}

The fixed-point theorem was originally proved by de Jongh and Sambin \cite{Sam76} for the propositional logic $\mathbf{GL}$ independently. 
In \cite{Sac99} Sacchetti proved the fixed-point theorem for the logic $\mathbf{K} + \Box^{n+1} \bot$. 
Let $A(p)$ be a propositional modal formula containing occurrences of $p$. We say $A(p)$ is \textit{modalized in $p$} if every occurrence of $p$ in $A(p)$ is in the scope of modal operators. For a propositional modal formula $B$, $A(B)$ denotes the one obtained from $A$ by substituting $B$ for all occurrences $p$ in $A$. To summarize the results, the fixed-point theorems are described as follows.

\begin{thm}[Fixed-point theorem (de Jongh, Sambin \cite{Sam76}, and Sacchetti \cite{Sac99})]\label{FPT}
Suppose that $\mathbf{L}$ is either $\GL$ or $\mathbf{K} + \Box^{n+1} \bot$. If $A(p)$ is modalized in $p$, then there is a formula $B$ containing only propositional variables occurring in $A(p)$, not containing $p$, and such that $\mathbf{L} \prove B \leftrightarrow A(B)$.
\end{thm}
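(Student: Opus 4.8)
The plan is to prove the fixed-point theorem by induction on the modal structure of the formula $A(p)$, following the classical strategy for $\GL$ but adapting it to handle the weaker box operators present in $\mathbf{K} + \Box^{n+1} \bot$. The key observation is that since $A(p)$ is modalized in $p$, every occurrence of $p$ sits inside some $\Box$, so we can write $A(p)$ as a formula built up from boxed subformulas $\Box C_1(p), \ldots, \Box C_k(p)$ together with propositional variables other than $p$ and Boolean connectives. The base case handles formulas in which $p$ does not occur at all (the fixed-point is $A$ itself), and the inductive step reduces the depth or number of boxed subformulas containing $p$.

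First I would establish the crucial semantic/syntactic lemma that plays the role of the uniqueness-and-existence machinery in Sambin's proof: in $\mathbf{K} + \Box^{n+1}\bot$ one has $\prove \Box^{n+1}\bot$, so the "depth" of nesting of boxes is effectively bounded by $n$, and any $\Box$-context can only see strictly fewer layers of itself. Concretely, I would show that if $A(p)$ is modalized in $p$, then $\mathbf{L} \prove \Box(p \leftrightarrow q) \to (A(p) \leftrightarrow A(q))$ — a substitutivity principle under the box — which lets us treat each boxed occurrence of $p$ as depending only on the value of $p$ "one step out." Combined with the bounded-length axiom, this means we can solve for the fixed-point layer by layer, starting from the deepest box (where $\Box^{n+1}\bot$ forces triviality) and working outward. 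At each layer the equation to solve is of the form $\Box X \leftrightarrow \Box D(X)$, which has an explicit solution.

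The hard part, and the main obstacle, is organizing the induction so that the constructed $B$ genuinely contains no occurrence of $p$ and only the variables already in $A(p)$, while verifying the fixed-point equation $\mathbf{L} \prove B \leftrightarrow A(B)$ provably in $\mathbf{L}$ rather than merely semantically. The challenge specific to $\mathbf{K} + \Box^{n+1}\bot$ (as opposed to $\GL$) is that we lack the \Lob\ axiom, so we cannot appeal directly to the usual derivation that uses $\Box(\Box p \to p) \to \Box p$; instead the termination of the recursion must come entirely from the bound $\Box^{n+1}\bot$. I would therefore make the induction primary on $n$ (the height bound) and secondary on the syntactic complexity of $A$, showing that substituting the approximate fixed-point obtained at stage $n$ into the remaining boxed context increments the box-depth and hence eventually hits $\Box^{n+1}\bot$, collapsing the remaining dependence on $p$.

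Since the theorem is quoted from de Jongh, Sambin, and Sacchetti and serves here only as background, I would in the actual paper cite \cite{Sam76} and \cite{Sac99} for the detailed constructions and emphasize that what matters for us is the \emph{algorithmic} character of the fixed-point: the proof yields an effective procedure computing $B$ from $A(p)$, a feature we will exploit when lifting the result to the predicate setting for $\QK + \Box^{n+1}\bot$ in Section~4.
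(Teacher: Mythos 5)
The paper does not prove this theorem at all: it is quoted as background, with the proofs deferred to de Jongh--Sambin \cite{Sam76} and Sacchetti \cite{Sac99}. The closest the paper comes to a proof is Theorem \ref{T2} in Section 4, whose propositional restriction yields the $\mathbf{K}+\Box^{n+1}\bot$ case together with an explicit algorithm (see the Remark following that theorem). Your layer-by-layer plan for $\mathbf{K}+\Box^{n+1}\bot$ is in the same spirit as that construction --- truncate at depth $n$, where $\Box^{n+1}\bot$ forces boxed subformulas to collapse to $\top$, and substitute approximations outward --- so the overall strategy is sound. But your proposal is only a plan, and its one concretely stated key lemma is false.

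Specifically, you claim that $\mathbf{L}\prove \Box(p\leftrightarrow q)\to\bigl(A(p)\leftrightarrow A(q)\bigr)$ for $A(p)$ modalized in $p$, where $\mathbf{L}$ may be $\mathbf{K}+\Box^{n+1}\bot$. This substitutivity principle requires transitivity (it is exactly Lemma \ref{L16} of the paper, proved in $\QKf$, using $\Box E\to\Box\boxdot E$), and $\mathbf{K}+\Box^{n+1}\bot$ does not prove the axiom $\mathbf{4}$. Counterexample: take $A(p)\equiv\Box\Box p$ and a three-world chain $w\prec v\prec u$ (which validates $\Box^{n+1}\bot$ for $n\ge 2$) with $p\leftrightarrow q$ true at $v$, $p$ true and $q$ false at $u$; then $\Box(p\leftrightarrow q)$ and $\Box\Box p$ hold at $w$ while $\Box\Box q$ fails. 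The antecedent must instead track depth, which is precisely what the paper's Lemma \ref{L5} does with the graded hypothesis $\Box^{n+1}\bot\land\bigwedge_{i\le n}\Box^{n-i}\bigl(\Box^{i+1}\bot\to(C_i\leftrightarrow D_i)\bigr)$; without some such device your ``solve each layer using substitution one step out'' step does not go through in $\mathbf{K}+\Box^{n+1}\bot$. (For the $\GL$ half you defer entirely to the classical argument, which is acceptable for a quoted background theorem but is not a proof.)
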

We call such a $B$ \textit{a fixed-point of $A(p)$ in $\mathbf{L}$}.

To describe the fixed-point properties for predicate modal logic, we need an auxiliary propositional variable to specify where to substitute fixed-points in predicate modal formulas. 
For this purpose, we define the following language $\mathcal{L}'$.
The language $\mathcal{L}'$ consists of $\mathcal{L}$ and one certain fixed propositional variable $p$. An $\mathcal{L}'$-formula $A$ is constructed as the following manner:
\begin{equation*}
 A ::= \top \mid \bot \mid p \mid P(u_1, \ldots, u_n) \mid \neg A \mid A \to A \mid \forall u A \mid \Box A
\end{equation*}

Montagna \cite{Mon84} showed that the predicate version of Theorem \ref{FPT} does not hold in $\mathbf{QGL}$.

\begin{thm}[Montagna \cite{Mon84}]\label{QFPT}
Let $A(p)$ be the $\mathcal{L}'$-sentence $\forall u \exists v \Box \left( p \to P(u, v) \right)$. Then $A(p)$ has no fixed-points in $\QGL$, that is, for any $\mathcal{L}$-sentence $B$ containing only the predicate symbol $P$, $\mathbf{QGL} \nvdash B \leftrightarrow A(B)$.
\end{thm}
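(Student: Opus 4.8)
The plan is to argue semantically, using the soundness inclusion $\QGL \subseteq \mathbf{MQ}(\CW)$ established in the preliminaries: it suffices to show that for every $\mathcal{L}$-sentence $B$ built only from $P$ there is a model in $\CW$ refuting $B \leftrightarrow A(B)$. The first thing I would record is that the fixed-point equation pins down the behaviour of a putative fixed point completely. Since every frame in $\CW$ is conversely well-founded, hence irreflexive, and since in $A(B) = \forall u\exists v\,\Box(B \to P(u,v))$ the variable $p$ (i.e.\ $B$) occurs only inside $\Box$, the truth value of $A(B)$ at a world $w$ depends only on the truth values of $B$ at the \emph{strict} successors of $w$. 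Arguing by $\prec$-induction, the demand $\mathcal{M},w \models B \leftrightarrow A(B)$ therefore forces a uniquely determined value $V(w) \in \{0,1\}$ at every world of every $\CW$-model, independently of the syntactic shape of $B$. A fixed point would thus have to be a single $\mathcal{L}$-formula whose value at each world of each $\CW$-model is exactly $V$.

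Next I would compute $V$ on simple models. At an endpoint the box is vacuous, so $A(B)$ is true and hence $V = 1$ there; consequently $B$ is forced true at all endpoints. At a world $w$ all of whose successors are endpoints one obtains $V(w) = 1$ iff the relation $\bigcap_{w \prec v} P^{v}$ is serial, i.e.\ iff $\forall a\,\exists b\,\forall v\,(w \prec v \Rightarrow \mathcal{M},v \models P(a,b))$. This reveals the mechanism: $V$ encodes seriality of intersections of the interpretation of $P$ across the successor worlds. It is instructive to note that on models of height at most $1$ the sentence $\forall u \exists v\,\Box P(u,v)$ already satisfies the fixed-point equation, so no contradiction can be produced at small height; the impossibility must be located at height $\ge 2$, where $B$ reappears inside the box over height-$1$ worlds and the seriality conditions become genuinely nested. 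This also explains why no naive modal-depth argument can work.

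The contradiction I would aim for is then as follows. Fix a candidate $B$ and let $c$ bound its complexity (modal depth together with quantifier rank). I would construct two models $\mathcal{M},\mathcal{M}' \in \CW$ with distinguished worlds $w, w'$ that satisfy exactly the same $\mathcal{L}$-sentences of complexity $\le c$, while $V(w) \ne V(w')$. Since $B$ has complexity $\le c$, it would then take the same value at $w$ and at $w'$, contradicting the requirement that the value of $B$ equal $V$ everywhere. To build such a pair one uses height-$2$ models in which the data governing $V$---whether certain nested intersections of $P$ are serial---is toggled deep inside the model, while the two models are kept indistinguishable to bounded formulas by interpreting $P$ on highly homogeneous domains (respecting the increasing-domain condition $D_w \subseteq D_v$) and matching the isomorphism types of all successor worlds.

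The hard part is exactly this last indistinguishability. Transitivity of $\CW$-frames means that $\Box$ already reaches every later world in a single step, so there is no modal-depth horizon behind which to hide the toggled data; moreover the quantifiers of $B$ may cross the modal operator, as they do in $A(p)$ itself, so a plain bisimulation will not suffice. The construction therefore requires a combined Ehrenfeucht--Fra\"iss\'e / bisimulation game for first-order modal formulas over increasing domains, arranged so that the players stay in step for $c$ rounds while the seriality of the deep intersection is flipped. An alternative route, avoiding the game, is arithmetical: $\QGL$ is arithmetically sound, so a fixed point would yield, for every arithmetical interpretation $\rho$ of $P$, a $\PA$-provable equivalence $B^{*} \leftrightarrow \forall x\exists y\,\mathrm{Pr}_{\PA}(\ulcorner B^{*} \to \rho(\dot{x},\dot{y})\urcorner)$, and one would then choose $\rho$ by a G\"{o}del-style diagonal construction so that this equivalence becomes refutable in $\PA$. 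Here the obstacle is the coupling that $B^{*}$ itself depends on $\rho$, which must be resolved by fixing $\rho$ simultaneously with the diagonalization.
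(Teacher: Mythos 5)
Your opening observation is correct and is the standard starting point: on any transitive, conversely well-founded model the equation $B \leftrightarrow A(B)$ determines the truth value of $B$ at every world by $\prec$-induction, so a fixed point would have to be a single $P$-only sentence computing that value $V$ uniformly on every $\CW$-model. But the plan you build on this -- defeating a candidate $B$ of complexity $c$ by a pair of height-$2$ models that agree on all $P$-only sentences of complexity $\le c$ yet differ on $V$ -- cannot succeed, and the paper itself shows why. By Theorem \ref{T2} and Corollary \ref{C1}, for each $n$ there is a single $P$-only sentence $A_n$ such that $A_n \leftrightarrow A(A_n)$ is valid on every frame of height $\le n$; hence on height-$2$ models the function $V$ is itself defined by the fixed $P$-only sentence $A_2$. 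Once $c$ exceeds the complexity of $A_2$, any two worlds of height-$\le 2$ models agreeing on all $P$-only sentences of complexity $\le c$ automatically agree on $V$, so the indistinguishable pair you want to construct does not exist. The same objection applies at every fixed finite height: the failure of the fixed-point property is invisible on bounded-height frames, so no Ehrenfeucht--Fra\"iss\'e game played on them can produce it. Your arithmetical alternative is only gestured at, with its central difficulty (the circular dependence of $B^{*}$ on $\rho$) explicitly left unresolved, so it does not close the gap either.

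The actual argument, which the paper attributes to Montagna and carries out in full for Smory\'nski's unary variant in Section 3, uses one fixed model of unbounded height: an $\omega$-chain in $\BL$ (worlds $\mathbb{N}$ with $m \prec n$ iff $n < m$, increasing domains, and a tailored interpretation of $P$), as in $\mathcal{M}_S$. Two facts do all the work. First (the analogue of Claim \ref{Sm}), every $\mathcal{L}$-sentence containing only $P$ is true on a set of worlds that is finite or cofinite; this is proved by induction on the sentence and is where the real content lies. Second (the analogue of Lemma \ref{L3}), the equation $B \leftrightarrow A(B)$ forces $B$ to hold exactly at the even worlds, a set that is neither finite nor cofinite. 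Hence no $P$-only $B$ satisfies the equation in that model, and since the model's frame lies in $\BL \subseteq \CW$ and $\QGL \subseteq \mathbf{MQ}(\CW)$, no $B$ is a fixed point in $\QGL$. To salvage your approach you should replace the bounded-height indistinguishable pair by this single unbounded-height model and prove the finite-or-cofinite lemma; as it stands, the proposal has a genuine and unfixable gap at its central step.
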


Here we define two semantical fixed-point properties for classes of frames. 

\begin{defi}
Let $\mathsf{C}$ be a class of Kripke frames.
	\begin{enumerate}
	\item The class $\mathsf{C}$ has \textit{the fixed-point property} if for any $\mathcal{L}'$-formula $A(p)$ which is modalized in $p$,
		there exists an $\mathcal{L}$-formula $B$ such that:
		\begin{enumerate}
		\item The formula $B$ contains only predicate symbols occurring in $A$;
		\item For any Kripke frame $\mathcal{F}$ in $\mathsf{C}$, $\mathcal{F} \models B \leftrightarrow A(B)$.
		\end{enumerate}
	\item The class $\mathsf{C}$ has \textit{the local fixed-point property} if for any $\mathcal{L}'$-formula $A(p)$ which is modalized in $p$, and for any Kripke frame $\mathcal{F}$ in $\mathsf{C}$, there exists an $\mathcal{L}$-formula $B$ such that:
		\begin{enumerate}
		\item The formula $B$ contains only predicate symbols occurring in $A$;
		\item $\mathcal{F} \models B \leftrightarrow A(B)$.
		\end{enumerate}
	\end{enumerate}
\end{defi}

Clearly if $\mathsf{C}$ has the fixed-point property, then $\mathsf{C}$ has the local fixed-point property. 
Montagna proved Theorem \ref{QFPT} by constructing a Kripke model $\mathcal{M}$ in $\BL$ such that for any $\mathcal{L}$-sentence $B$ containing only $P$, the formula $B \leftrightarrow A(B)$ is not valid in $\mathcal{M}$.
Thus we obtain the following corollary:

\begin{cor}\label{C2}\leavevmode
	\begin{enumerate}
	\item The classes $\CW$ and $\BL$ have neither the local fixed-point property, nor the fixed-point property.
	\item The fixed-point theorem for $\NQGL$ does not hold.
	\end{enumerate} 
\end{cor}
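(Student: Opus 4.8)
The plan is to derive both parts directly from Montagna's Theorem~\ref{QFPT}, the containment $\BL \subseteq \CW$, and Tanaka's completeness Theorem~\ref{Tana}; no fresh construction is needed. Throughout I would fix the $\mathcal{L}'$-sentence $A(p) :\equiv \forall u \exists v \Box(p \to P(u,v))$, which is modalized in $p$ and whose only predicate symbol is $P$, and I would reuse the Kripke model $\mathcal{M} = \langle \mathcal{F}, \Vdash \rangle$ with $\mathcal{F} \in \BL$ that Montagna builds, for which $\mathcal{M} \not\models B \leftrightarrow A(B)$ holds for every $\mathcal{L}$-sentence $B$ containing only $P$.

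For part 1, I would argue that the single frame $\mathcal{F}$ underlying $\mathcal{M}$ already refutes the local fixed-point property of $\BL$. The crucial point is that frame-validity $\mathcal{F} \models B \leftrightarrow A(B)$ quantifies universally over all interpretations of $\mathcal{F}$, so to conclude $\mathcal{F} \not\models B \leftrightarrow A(B)$ it suffices to exhibit one interpretation falsifying the biconditional; Montagna's $\Vdash$ is exactly such an interpretation, and it works uniformly in $B$. Hence for this $A(p)$ and this $\mathcal{F} \in \BL$ there is no $\mathcal{L}$-formula $B$ whose predicate symbols are among those of $A$ (i.e.\ containing only $P$) with $\mathcal{F} \models B \leftrightarrow A(B)$, which is precisely the negation of the local fixed-point property for $\BL$. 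Since $\mathcal{F} \in \BL \subseteq \CW$, the same witness shows that $\CW$ too lacks the local fixed-point property. The failure of the (global) fixed-point property for both classes is then immediate from the earlier remark that the fixed-point property implies the local one: contrapositively, failure of the local property forces failure of the global one.

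For part 2, I would refute the fixed-point theorem for $\NQGL$ using the same $A(p)$. Suppose toward a contradiction that some $\mathcal{L}$-formula $B$ containing only $P$ satisfies $\NQGL \prove B \leftrightarrow A(B)$. By Theorem~\ref{Tana}, $\NQGL = \mathbf{MQ}(\BL)$, so $B \leftrightarrow A(B)$ is valid in every frame of $\BL$, in particular in $\mathcal{F}$; this contradicts the $\mathcal{F} \not\models B \leftrightarrow A(B)$ established above. Thus $A(p)$ has no fixed-point in $\NQGL$ and the analogue of Theorem~\ref{FPT} fails. There is no genuine obstacle in this corollary; the only points requiring care are the quantifier direction in the definition of frame-validity—that one interpretation suffices to refute $\mathcal{F} \models (\cdot)$—and the observation that Montagna's model, lying in $\BL$, simultaneously serves as the witness for $\CW$ via $\BL \subseteq \CW$ and, through Tanaka's completeness, transfers the semantic failure into syntactic non-derivability in $\NQGL$.
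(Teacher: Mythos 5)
Your proposal is correct and follows essentially the same route as the paper: the authors likewise observe that Montagna's model lies in $\BL$ (hence in $\CW$), that a single falsifying interpretation refutes frame-validity of $B \leftrightarrow A(B)$ for every candidate $B$ containing only $P$, and that clause 2 then follows from clause 1 together with Theorem \ref{Tana}. No differences worth noting.
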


The second clause of Corollary \ref{C2} immediately follows from the first clause and Theorem \ref{Tana}. 

\subsection{The substitution lemma}

The following substitution lemma will be used in Sections 5 and 6.

\begin{lem}[Substitution lemma]\label{L16}
Let $A(p)$ be any $\mathcal{L}'$-formula.
Let $F$ and $G$ be $\mathcal{L}$-formulas containing no free variables which are bounded in $A(p)$.
Then $\QKf \prove \boxdot (F \leftrightarrow G) \to \left( A(F) \leftrightarrow A(G) \right)$. Moreover,
if $A(p)$ is modalized in $p$, then
$\QKf \prove \Box ( F \leftrightarrow G) \to \left( A(F) \leftrightarrow A(G) \right)$.
\end{lem}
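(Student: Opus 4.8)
The plan is to prove both assertions simultaneously by induction on the construction of $A(p)$: the $\boxdot$-statement for every $\mathcal{L}'$-formula $A(p)$, and the $\Box$-statement for those $A(p)$ that are modalized in $p$. Throughout, write $X :\equiv F \leftrightarrow G$. The atomic cases are immediate: if $A(p)$ contains no occurrence of $p$ (in particular $A(p) \equiv \top$, $\bot$, or $P(u_1,\dots,u_n)$), then $A(F)$ and $A(G)$ are literally the same formula, so $A(F) \leftrightarrow A(G)$ is provable outright; and if $A(p) \equiv p$, then $A(F) \leftrightarrow A(G)$ is just $X$, which $\boxdot X$ yields by conjunction elimination (note $A(p) \equiv p$ is not modalized, so only the $\boxdot$ claim is needed here). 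For $A(p) \equiv \neg B(p)$ and $A(p) \equiv B(p) \to C(p)$ the conclusion follows from the induction hypothesis for the immediate subformulas by pure propositional reasoning, and modalization is inherited by $B$ and $C$, so both claims pass through.

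The quantifier case $A(p) \equiv \forall u\, B(p)$ is where the hypothesis on $F$ and $G$ enters. Since $u$ is bound in $A(p)$, by assumption $u$ is not free in $F$ or $G$, hence not free in $\boxdot X$ (resp. $\Box X$). The induction hypothesis gives $\QKf \prove \boxdot X \to (B(F) \leftrightarrow B(G))$; generalizing on $u$, which is legitimate precisely because $u$ does not occur free in the antecedent, and then using the predicate-logic theorem $\forall u (Y \leftrightarrow Z) \to (\forall u\, Y \leftrightarrow \forall u\, Z)$ yields $\QKf \prove \boxdot X \to (\forall u\, B(F) \leftrightarrow \forall u\, B(G))$. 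The modalized version is identical, replacing $\boxdot X$ by $\Box X$.

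The modal case $A(p) \equiv \Box B(p)$ is the crux, and it is exactly here that axiom $\mathbf{4}$ is used, which is why the lemma is stated for $\QKf$ rather than $\QK$. From the $\boxdot$ induction hypothesis $\QKf \prove \boxdot X \to (B(F) \leftrightarrow B(G))$, necessitation together with the distribution axiom gives $\QKf \prove \Box \boxdot X \to \Box(B(F) \leftrightarrow B(G))$, and the congruence $\Box(Y \leftrightarrow Z) \to (\Box Y \leftrightarrow \Box Z)$, derivable from the distribution axiom, then gives $\QKf \prove \Box \boxdot X \to (\Box B(F) \leftrightarrow \Box B(G))$. It therefore suffices to prove $\boxdot X \to \Box \boxdot X$; since $\Box \boxdot X \leftrightarrow (\Box \Box X \land \Box X)$ by distribution of $\Box$ over $\land$, this reduces to the instance $\Box X \to \Box \Box X$ of axiom $\mathbf{4}$. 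For the modalized claim, observe that $\Box B(p)$ is modalized in $p$ for arbitrary $B(p)$, so we may again invoke the $\boxdot$ hypothesis for $B$; the only change is that the single outer box lets us use $\Box X \to \Box \boxdot X$ in place of $\boxdot X \to \Box \boxdot X$, and this weaker implication is likewise an instance of axiom $\mathbf{4}$ together with distribution. The main point to watch is the bookkeeping between the two claims: the $\boxdot$ statement must be available for all subformulas in order to discharge the $\Box$ case of the $\Box$ statement. Beyond this interplay and the essential appeal to $\mathbf{4}$, no genuine difficulty arises.
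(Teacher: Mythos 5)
Your proposal is correct and follows essentially the same route as the paper's own proof: induction on the construction of $A(p)$, with the quantifier case exploiting that bound variables of $A(p)$ are not free in $F$ and $G$ so that generalization is legitimate, and the box case applying the $\boxdot$-induction hypothesis under a box and then discharging the antecedent via axiom $\mathbf{4}$ (the paper packages this as $\QKf \prove \Box E \to \Box \boxdot E$, which yields both claims at once since $\boxdot X \to \Box X$; your separate uses of $\boxdot X \to \Box\boxdot X$ and $\Box X \to \Box\boxdot X$ amount to the same fact).
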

	
\begin{proof}
Induction on the construction of $A(p)$.
	\begin{itemize}
	\item If $A(p)$ does not contain $p$, then Lemma trivially holds.
	\item Assume $A(p) \equiv p$. Then $A(F) \equiv F$ and $A(G) \equiv G$, and thus Lemma holds.
	\item The cases $A(p) \equiv \neg B(p)$ and $A(p) \equiv B(p) \to C(p)$ are clear.
	\item Assume $A(p) \equiv \forall u B(p)$ and Lemma holds for $B(p)$. If $F$ and $G$ contain no free
	variables which are bounded in $A(p)$, then every free variable of $F$ and $G$ is not equal to $u$, and 
	hence is not bounded in $B(p)$. By the induction hypothesis, 
	$\QKf \prove \boxdot (F \leftrightarrow G) \to \left( B(F) \leftrightarrow B(G) \right)$. Since $u$
	does not occur freely in $F$ and $G$, we have
	$\QKf \prove \boxdot (F \leftrightarrow G) \to \forall u \left( B(F) \leftrightarrow B(G) \right)$.
	Distributing $\forall$, we conclude
	$\QKf \prove \boxdot (F \leftrightarrow G) \to
	\left( \forall u B(F) \leftrightarrow \forall u B(G) \right)$.
	(If $A(p)$ is modalized in $p$, then so is $B(p)$.
	By the induction hypothesis,
	$\QKf \prove \Box (F \leftrightarrow G) \to \left( B(F) \leftrightarrow B(G) \right)$.
	Applying a similar argument, we conclude $\QKf \prove \Box (F \leftrightarrow G) \to \left( \forall u B(F)
	\leftrightarrow \forall u B(G) \right)$.)
	\item Assume $A(p) \equiv \Box B(p)$ and Lemma holds for $B(p)$. By the induction hypothesis,
	$\QKf \prove \boxdot (F \leftrightarrow G) \to \left( B(F) \leftrightarrow B(G) \right)$.
	By the derivation of $\QK$,
	$\QKf \prove \Box \boxdot (F \leftrightarrow G) \to
	\left( \Box B(F) \leftrightarrow \Box B(G) \right)$.
	Recall that $\QKf \prove \Box E \to \Box \boxdot E$ for any $E$. Thus we conclude
	$\QKf \prove \Box (F \leftrightarrow G) \to
	\left( \Box B(F) \leftrightarrow \Box B(G) \right)$.
	\end{itemize}
\end{proof}

\section{Failure of the fixed-point property for $\FIFD$}

In this section, we prove that the class $\FIFD$ dos not enjoy the fixed-point property. 
As a consequence, we obtain that the classes $\FH$ and $\FI$ also do not have the fixed-point property. 

In our proof, we borrow an idea from the following Smory\'nski's improvement of Montagna's theorem (Theorem \ref{QFPT}). 

\begin{thm}[\Smor\ \cite{Smo87}]\leavevmode \label{SmT}
The $\mathcal{L}'$-formula $\forall u \Box ( p \to P ( u ) )$ has no fixed-points in $\QGL$. 
\end{thm}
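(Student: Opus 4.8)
The plan is to argue semantically, exploiting $\QGL \subseteq \mathbf{MQ}(\CW)$. If $A(p) :\equiv \forall u \Box(p \to P(u))$ had a fixed-point $B$, then $\QGL \prove B \leftrightarrow A(B)$, so for every model $\mathcal{M}$ whose frame lies in $\CW$ we would have $\mathcal{M} \models B \leftrightarrow A(B)$, i.e.\ the biconditional would be true at \emph{every} world of $\mathcal{M}$. Following the method by which Montagna proved Theorem \ref{QFPT} (recalled just after it) and its simplification by \Smor, I would therefore construct a single model $\mathcal{M}$ whose frame lies in $\BL \subseteq \CW$ and show that for \emph{every} $\mathcal{L}$-sentence $B$ built only from $P$, the formula $B \leftrightarrow A(B)$ fails at some world of $\mathcal{M}$. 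This refutes all candidate fixed-points simultaneously and yields Theorem \ref{SmT}.

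The first concrete step is \emph{terminal-world forcing}: at any $\prec$-maximal world $w$ the box $\Box(\cdots)$ is vacuously satisfied, so $\mathcal{M}, w \models A(B)$, and hence any fixed-point satisfies $\mathcal{M}, w \models B$ regardless of how $P$ is interpreted at $w$. Propagating this one step, at a world $w$ all of whose successors are maximal one computes that $\mathcal{M}, w \models A(B)$ holds iff every element of $D_w$ satisfies $P$ at every successor of $w$; that is, $A(B)$ collapses there to $\forall u \Box P(u)$. The engine of the counterexample is that the leading $\forall u$, coupled to $\Box$, inspects every domain element across the successors, so that $A$ effectively raises the first-order quantifier rank by one: $A(B)$ has quantifier rank $\mathrm{qr}(B)+1$. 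The design goal is thus to include in $\mathcal{M}$, for each $k$, a pair of worlds $w_1, w_2$ that satisfy the same $\mathcal{L}$-sentences over $P$ of quantifier rank at most $k$ (hence agree on any fixed $B$ with $\mathrm{qr}(B)=k$), yet at which the rank-$(k+1)$ sentence $A(B)$ takes opposite truth values; the equivalence $B \leftrightarrow A(B)$ then fails at one of them. Concretely I would use an infinite domain growing along $\prec$ and a homogeneous family of successor configurations whose $P$-patterns place a ``$P$-defect'' (an element omitted from $P$) at sufficient modal depth, so that the defect is invisible to rank-$k$ formulas but is exactly what $A(B)$, through $\forall u \Box$, detects.

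The main obstacle is precisely establishing this rank-$k$ indistinguishability while keeping $A(B)$ separating, i.e.\ the expressibility core of the argument. Naive configurations are far too weak: a single $P$-defect on a root element is already separated at rank $1$ by $\exists u \Diamond \neg P(u)$, and small models even admit accidental solutions such as $B :\equiv \Box\bot$, so the defect must be hidden below a level at which $B$'s own truth value is substantive (forcing the relevant worlds to have height at least $2$) and the configurations must be symmetric enough that a duplicator wins the corresponding Ehrenfeucht--Fra\"iss\'e game. Carrying out this back-and-forth argument in the predicate modal setting---where the domains $D_w$ increase along $\prec$ and the first-order quantifier $\forall u$ interacts with the modal transitions---is the delicate part. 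Once the model is built and the indistinguishability verified, comparing the value forced on $B$ by the fixed-point equation with the diagonal value of $A(B)$ produces the contradiction.
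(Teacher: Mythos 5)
Your overall strategy is the right one and is essentially Smory\'nski's: exhibit a single model whose frame lies in $\BL$ and show that $B \leftrightarrow A(B)$ fails at some world for \emph{every} $\mathcal{L}$-sentence $B$ built from $P$; the terminal-world observation is also correct. But there is a genuine gap exactly where you locate ``the main obstacle'': neither the model nor the indistinguishability fact is actually produced, and the design principle you propose for obtaining them --- hide a $P$-defect deep enough that it is \emph{invisible} to all sentences of quantifier rank $\leq \mathrm{qr}(B)$ --- is not what makes the argument work and is not achievable as stated. Since $A(B)$ is itself a sentence over $P$, a defect that $A(B)$ detects cannot be invisible to all sentences over $P$; moreover ``rank $\leq k$'' does not reduce the sentences over $P$ to a finite set, because modal depth is unbounded, so ``agreeing on all rank-$\leq k$ sentences'' is already as strong as the dichotomy you are trying to avoid proving. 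What Smory\'nski actually exploits is not invisibility but \emph{uniformity}: in the model $\mathcal{M}_S$ (worlds $\mathbb{N}$, $m \prec n \iff n < m$, $D_n = \{ m : m \geq n \}$, and $n \Vdash P(m) \iff m \neq n+1$) the defect pattern is the same at every world up to shift, and the resulting homogeneity (Lemma~\ref{L1}) yields Claim~\ref{Sm}: the truth set of \emph{every} sentence over $P$ is finite or cofinite. That dichotomy replaces your per-$B$ choice of an indistinguishable pair of worlds.

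The separating mechanism also needs to be pinned down rather than inferred from rank-counting. For an arbitrary unknown $B$, the disagreement of $A(B)$ at two chosen worlds must be forced by a computation: in $\mathcal{M}_S$ one checks that for $n \geq 1$ the only instance of $\Box ( B \to P(a) )$ with $a \in D_n$ that can fail at $n$ is $a = n$ at the successor $n-1$, whence $\mathcal{M}_S, n \models A(B) \iff \mathcal{M}_S, n-1 \not\models B$. Combined with the vacuous truth of $A(B)$ at the terminal world $0$, validity of $B \leftrightarrow A(B)$ in $\mathcal{M}_S$ would force $B$ to hold exactly at the even worlds, a set that is neither finite nor cofinite --- contradiction. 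Without this explicit one-step reduction (or a substitute for it), your claim that the pair of worlds can be arranged to take ``opposite truth values'' on $A(B)$ does not follow from indistinguishability alone. So the skeleton of your argument is correct and coincides with the paper's, but the two load-bearing components --- the homogeneity lemma and the recursion for $A(B)$ --- are missing.
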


The details of the proof of Theorem \ref{SmT} is as follows. 
Let $\mathbb{N}$ be the set of all natural numbers, and $\mathcal{M}_S := \langle W, \prec, {\{ D_n \}}_{n \in W}, \Vdash \rangle$ where
	\begin{itemize}
	\item $W := \mathbb{N}$;
	\item $m \prec n : \Leftrightarrow n < m$;
	\item $D_n := \{ m \in \mathbb{N} \mid m \geq n \}$;
	\item $n \Vdash P(m) : \Leftrightarrow m \neq n+1$.
	\end{itemize}

The Kripke frame $\langle W, \prec, {\{ D_n \}}_{n \in W} \rangle$ is a member of $\BL$. The following claim holds for $\mathcal{M}_S$.

\begin{cla}[\Smor\ \cite{Smo87}]\label{Sm}
Let $A$ be an $\mathcal{L}$-sentence containing only the predicate symbol $P$. Then the set $\{ n \in \mathbb{N} \mid \mathcal{M}_S, n \models A \}$ is either finite or co-finite.
\end{cla}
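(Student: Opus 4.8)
The plan is to recast the claim as an eventual-constancy statement. A subset of $\mathbb{N}$ is finite or co-finite exactly when its membership is eventually constant, so it suffices to prove that for every $\mathcal{L}$-sentence $A$ containing only $P$ there is an $N$ with $\mathcal{M}_S, n \models A \iff \mathcal{M}_S, n+1 \models A$ for all $n \geq N$; iterating then pins the truth value of $A$ to a single value for all large $n$. The engine driving this is the self-similarity of $\mathcal{M}_S$ under the shift $\sigma(m) = m+1$, acting simultaneously on worlds and on domain elements. A direct check shows $\sigma$ preserves the accessibility relation (since $n < m \iff n+1 < m+1$), the domain inclusions (it maps $D_n$ bijectively onto $D_{n+1}$), and the interpretation (since $n \Vdash P(m)$ and $(n+1) \Vdash P(m+1)$ both assert $m \neq n+1$). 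Thus $\sigma$ is an isomorphism of $\mathcal{M}_S$ onto its restriction to the worlds $\geq 1$; the only obstruction to its being an automorphism is the shared bottom world $0$.

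To exploit this I would run a bounded bisimulation (Ehrenfeucht--Fra\"iss\'e) argument comparing the pointed models $(\mathcal{M}_S, n)$ and $(\mathcal{M}_S, n+1)$. Assign to each formula a depth $d$ combining its modal and quantifier nesting, and show that for $n \geq d$ the duplicator wins the $d$-round game, so the two pointed models agree on all sentences of depth $\leq d$. On the purely modal part this is clean: from $n$ the accessible worlds are $\{0, \ldots, n-1\}$ and from $n+1$ they are $\{0, \ldots, n\}$; since the submodel generated by a world $m$ has height $m$, and two such submodels are $e$-bisimilar as soon as both heights are $\geq e$, the two lists of children realize exactly the same depth-$(d-1)$ types once $n \geq d$. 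The duplicator therefore matches children of equal small height by the identity near the bottom, and matches the high children (all equivalent at depth $d-1$) by the shift $m \leftrightarrow m+1$.

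The main obstacle is the first-order layer: the quantifier $\forall u$ at world $n$ ranges over the infinite domain $D_n$, so I must show these elements collapse into finitely many depth-bounded types and that the duplicator can match parameters while respecting $P$ at every matched world at once. The decisive structural fact is that at each world $w$ the predicate $P$ marks the single element $w+1$ and leaves all others satisfying $P$; hence an element $a \geq n$ is determined, up to depth $d$, by how far its unique ``failure world'' $a-1$ sits below the current world, and elements whose failure world lies deeper than $d$ become mutually indistinguishable. This yields a finite, shift-invariant classification of parameters, so that $\forall u$ reduces to a finite conjunction over representatives and the element-matching can be carried coherently alongside the world-matching established above.

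Putting the pieces together is the technical heart of the argument: one must verify that matching worlds by shift-on-top and identity-at-bottom, together with the corresponding parameter matching, simultaneously preserves $\prec$, the domain memberships $a \in D_w$, and every instance of $P$ encountered during the game. Granting this, the duplicator wins $d$ rounds whenever $n \geq d$, so $(\mathcal{M}_S, n)$ and $(\mathcal{M}_S, n+1)$ satisfy the same $\mathcal{L}$-sentences of depth $\leq d$ for all such $n$. Taking $N$ to be the depth of the given sentence $A$ yields the required eventual constancy, and hence the set $\{ n \in \mathbb{N} \mid \mathcal{M}_S, n \models A \}$ is finite or co-finite.
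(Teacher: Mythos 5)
First, a point of order: the paper does not prove Claim \ref{Sm} at all --- it is quoted from \Smor\ \cite{Smo87} --- so there is no in-paper proof to measure yours against. On its own terms, your reduction of ``finite or co-finite'' to eventual constancy of the truth value of $A$ along $n$ is correct, and your structural observations are sound: the shift $\sigma(m)=m+1$ really is an isomorphism of $\mathcal{M}_S$ onto its restriction to the worlds $\geq 1$, and the collapse of parameters whose ``failure world'' lies out of reach is exactly the paper's Lemma \ref{L1}. So the skeleton is reasonable and the right ingredients are on the table.

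The gap is that the decisive step is asserted rather than proved. You write ``Granting this, the duplicator wins $d$ rounds,'' but what is granted --- that the shift-on-top/identity-at-bottom world matching together with the corresponding element matching stays consistent with $\prec$, the domains, and $P$ through all $d$ rounds --- is where the entire difficulty of the claim is concentrated. Two concrete obligations a finished proof must discharge: (i) since $\prec$ is transitive (it is $>$), the spoiler can jump from the root directly to any lower world, including $0$ and $1$, so the cutoff between ``shift'' and ``identity'' must be made explicit and one must check that no matched element pair $(a,a+1)$ ever lands at an identity-matched world $k$ with $k+1\in\{a,a+1\}$; (ii) the world matching and the element matching constrain each other: if the element $n\in D_n$ has been matched with $n+1\in D_{n+1}$, then when the spoiler later moves to world $n-1$ (where $P(n)$ fails) the duplicator is forced to answer with world $n$ (the unique world where $P(n+1)$ fails), so the shift propagates downward exactly as far as marked parameters force it, and one must verify this never collides with the bottom world. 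None of this is fatal --- the statement is true and a bounded-bisimulation proof can be completed --- but as written the proposal defers precisely the verification that constitutes the proof. A more elementary route, closer to \Smor's own, is a direct induction on $A$ (allowing parameters of the form $n+i$) showing each truth set is finite or co-finite: the $\Box$ case is immediate because $\{n : \mathcal{M}_S, n\models \Box B\}$ is always an initial segment of $\mathbb{N}$ or all of $\mathbb{N}$, and the $\forall$ case reduces to a finite conjunction by Lemma \ref{L1}.
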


Using this fact, \Smor \ showed that for any $\mathcal{L}$-sentence $B$ containing only $P$, the formula $B \leftrightarrow A(B)$ is not valid in $\mathcal{M}_S$, and hence $\QGL \nvdash B \leftrightarrow A(B)$.

First, we prove the following lemma concerning Smory\'nski's model $\mathcal{M}_S$.

\begin{lem}\label{L1}
Let $n \in \mathbb{N}$ and $A(u)$ be an $\mathcal{L}$-formula with parameters from $D_n$ containing only the predicate symbol $P$. Then for any $m_1, m_2 \geq n+2$, 
	\begin{equation*}
	\mathcal{M}_S, n \models A(m_1) \leftrightarrow A(m_2).
	\end{equation*}
\end{lem}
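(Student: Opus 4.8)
The plan is to prove Lemma~\ref{L1} by induction on the construction of the $\mathcal{L}$-formula $A(u)$, where the crucial point is to establish a symmetry of the model $\mathcal{M}_S$ from the vantage point of the world $n$. The key observation is this: the parameters $m_1, m_2 \geq n+2$ are, from the point of view of $n$ and every world $v$ with $n \prec v$ (i.e.\ every $v < n$), completely interchangeable. Indeed, the only way the predicate $P$ distinguishes elements is through the clause $v \Vdash P(m) \Leftrightarrow m \neq v+1$; so $m_1$ and $m_2$ behave differently at a world $v$ only if one of them equals $v+1$. Since we are looking at truth at $n$, the relevant worlds $v$ are those with $n \prec v$, which by definition of $\prec$ means $v < n$, hence $v + 1 \leq n < n+2 \leq m_1, m_2$. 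Thus at every world accessible from $n$ (and at $n$ itself, where $n+1 < n+2 \leq m_i$), neither $m_1$ nor $m_2$ is ever the ``forbidden'' value $v+1$, so $P$ cannot tell them apart.

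First I would make this precise by proving a slightly stronger statement tailored to the induction: for every world $v$ with $v \leq n$ and every $m_1, m_2 \geq n+2$ (note $m_1,m_2 \in D_v$ since $m_i \geq n \geq v$), one has $\mathcal{M}_S, v \models A(m_1) \leftrightarrow A(m_2)$. The atomic case is exactly the computation above: $v \Vdash P(m_i) \Leftrightarrow m_i \neq v+1$, and since $v \leq n$ gives $v+1 \leq n+1 < n+2 \leq m_i$, both $P(m_1)$ and $P(m_2)$ are true at $v$. The Boolean cases $\neg B$ and $B \to C$ are immediate from the induction hypothesis applied at the same world $v$.

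The two cases requiring care are the quantifier and the modality, and these are where the strengthened statement pays off. For $A(u) \equiv \forall w\, B(u,w)$, truth at $v$ quantifies over $a \in D_v$; applying the induction hypothesis to the formula $B(u, a)$ (with parameter $a$) at the same world $v$ gives $\mathcal{M}_S, v \models B(m_1, a) \leftrightarrow B(m_2, a)$ for each $a$, from which the biconditional for the universal formula follows. For $A(u) \equiv \Box B(u)$, truth at $v$ concerns all $v'$ with $v \prec v'$, that is, all $v' < v \leq n$; each such $v'$ satisfies $v' \leq n$, so the induction hypothesis applies at $v'$ to yield $\mathcal{M}_S, v' \models B(m_1) \leftrightarrow B(m_2)$, and hence $\Box B(m_1) \leftrightarrow \Box B(m_2)$ holds at $v$.

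The main obstacle I anticipate is bookkeeping rather than conceptual depth: one must verify that the bound $v \leq n$ is genuinely preserved when passing to accessible worlds $v' < v$, and that the threshold $n+2$ is uniform along the whole induction (it must not shrink as we descend the accessibility relation). The choice to prove the strengthened ``for all $v \leq n$'' version, rather than only the $v = n$ case stated in the lemma, is precisely what makes the $\Box$-step go through, since a naive induction fixing $v = n$ would break when the modality forces us to evaluate at worlds below $n$. The lemma as stated then follows by taking $v = n$.
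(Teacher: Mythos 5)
Your proposal is correct and follows essentially the same route as the paper: an induction on the structure of $A(u)$ whose key point is that $P$ cannot distinguish any two elements $\geq n+2$ at the world $n$ or at any world accessible from it. Your ``strengthened'' statement (fixing the threshold $n+2$ and letting the world $v \leq n$ vary) is just a reparametrization of the paper's formulation, which instead quantifies the lemma over all $n$ and invokes the induction hypothesis at $k<n$ using $m_1,m_2 \geq n+2 > k+2$; the two are equivalent and handle the $\Box$-case in the same way.
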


\begin{proof}
Induction on the construction of $A(u)$.
	\begin{itemize}
	\item The cases $A(u) \equiv \top$ and $A(u) \equiv \bot$ are trivial.
	\item Assume $A(u) \equiv P(u)$. Then by the definition of $\Vdash$, for any $m_1, m_2 \geq n+2$,
	$\mathcal{M}_S, n \models P(m_1)$ and  $\mathcal{M}_S, n \models P(m_2)$.
	\item The cases $A(u) \equiv \neg B(u)$ and $A(u) \equiv B(u) \to C(u)$ are clear by the induction hypothesis.
	\item Assume $A(u) \equiv \forall v B(u, v)$. Then
		\begin{align*}
		\mathcal{M}_S, n \models \forall v B(m_1, v) & \iff \mathcal{M}_S, n \models B(m_1, m')
		\text{ for any } m' \in D_n, \\
		\iff & \mathcal{M}_S, n \models B(m_2, m') \text{ for any } m' \in D_n, \tag{I.H.}\\
		\iff & \mathcal{M}_S, n \models \forall v B(m_2, v). 
		\end{align*}
	\item Assume $A(u) \equiv \Box B(u)$. Then
		\begin{align*}
		\mathcal{M}_S, n \models \Box B(m_1) & \iff \mathcal{M}_S, k \models B(m_1) \text{ for any } k < n.
		\end{align*}
	By $D_n \subseteq D_k$ for any $k < n$, $B(u)$ is an $\mathcal{L}$-formula with parameters from $D_k$. By the induction hypothesis (note that $k+2 < n+2 \leq m_1, m_2$),
		\begin{align*}
		\mathcal{M}_S, k \models B(m_1) \text{ for any } k < n & \iff 
		\mathcal{M}_S, k \models B(m_2) \text{ for any } k < n, \\
		 & \iff \mathcal{M}_S, n \models \Box B (m_2).
		\end{align*}
	\end{itemize}
\end{proof}

Next, we define Kripke models which are finitizations of Smory\'nski's model $\mathcal{M}_S$. 
For each $k \in \mathbb{N}$, we define $\mathcal{M}_k: = \langle W_k, \prec_k, {\{ D_n^k \}}_{n \in W_k}, \Vdash_k \rangle$ where
\begin{itemize}
\item $W_k := \{ 0, 1, \ldots, k \}$;
\item $m \prec_k n :\Leftrightarrow m \prec n (\Leftrightarrow n < m)$;
\item $D_n^k := \{ n, n+1, \ldots, k+2 \}$;
\item $n \Vdash_k P(m) :\Leftrightarrow n \Vdash P(m) (\Leftrightarrow m \neq n+1)$.
\end{itemize}

For each $k \in \mathbb{N}$, the frame $\langle W_k, \prec_k, {\{ D_n^k \}}_{n \in W_k} \rangle$ belongs to $\FIFD$.  

\begin{lem}\label{L2}
Fix $k \in \mathbb{N}$. For any $n \leq k$ and $\mathcal{L}$-sentence $A$ with parameters from $D_n^k$ containing only $P$,
	\begin{equation*}
	\mathcal{M}_S, n \models A \iff \mathcal{M}_k, n \models_k A.
	\end{equation*}
\end{lem}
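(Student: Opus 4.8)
The plan is to argue by induction on the construction of $A$, establishing the equivalence simultaneously for all worlds $n \le k$ and all assignments of parameters drawn from $D_n^k$. The atomic case $A \equiv P(m)$ with $m \in D_n^k$ is immediate, since $\Vdash_k$ agrees with $\Vdash$ by definition, so both sides reduce to the condition $m \ne n+1$; the Boolean cases $\neg B$ and $B \to C$ follow at once from the induction hypothesis. The real content lies in the quantifier and modal cases, precisely because these are where $\mathcal{M}_k$ and $\mathcal{M}_S$ genuinely diverge: $\mathcal{M}_k$ has strictly smaller domains ($D_n^k$ is finite) and fewer worlds ($W_k$ is finite) than $\mathcal{M}_S$.

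For the modal case $A \equiv \Box B$, I would first note that the worlds accessible from $n$ coincide in the two models. Since $n \prec v$ and $n \prec_k v$ both amount to $v < n$, and every such $v$ lies in $W_k$ because $v < n \le k$, the two accessible sets are identical, namely $\{0, 1, \dots, n-1\}$. For each accessible $v < n$ one has $D_n^k \subseteq D_v^k$, so the parameters of $B$, which come from $D_n^k$, also lie in $D_v^k$; the induction hypothesis applied at $v$ then yields $\mathcal{M}_S, v \models B \iff \mathcal{M}_k, v \models_k B$ for every such $v$, and the two readings of $\Box B$ agree.

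The hard part will be the quantifier case $A \equiv \forall u B(u)$, and it is here that Lemma \ref{L1} is indispensable. In $\mathcal{M}_S$ the universal quantifier ranges over the infinite domain $D_n = \{n, n+1, \dots\}$, whereas in $\mathcal{M}_k$ it ranges only over the finite set $D_n^k = \{n, \dots, k+2\}$, so I must show that the infinitely many extra instances contribute nothing new. By Lemma \ref{L1}, for all $m \ge n+2$ the sentences $B(m)$ share the same truth value at $n$ in $\mathcal{M}_S$; in particular every instance $B(m)$ with $m \ge k+3$ agrees with $B(k+2)$, which already lies in the finite range $D_n^k$ (here $k+2 \ge n+2$ because $n \le k$). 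Consequently $\mathcal{M}_S, n \models \forall u B(u)$ is equivalent to the conjunction of $\mathcal{M}_S, n \models B(m)$ over all $m \in D_n^k$. Applying the induction hypothesis to each such sentence $B(m)$, whose parameters again lie in $D_n^k$, converts this into $\mathcal{M}_k, n \models_k B(m)$ for all $m \in D_n^k$, which is exactly $\mathcal{M}_k, n \models_k \forall u B(u)$. The only further point demanding attention is the routine bookkeeping that lets Lemma \ref{L1} apply, namely that $B(u)$ contains only $P$ and has parameters in $D_n$, both of which are inherited from the hypotheses on $A$.
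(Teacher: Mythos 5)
Your proof is correct and follows essentially the same route as the paper's: induction on the construction of $A$, with the modal case handled by noting the accessible worlds and domains agree, and the quantifier case reduced to the finite domain $D_n^k$ by invoking Lemma \ref{L1} to collapse all instances $B(m)$ with $m \geq k+2$ to the single representative $B(k+2)$. No gaps.
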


\begin{proof}
Induction on the construction of $A$.
	\begin{itemize}
	\item The cases $A \equiv \top$ and $A \equiv \bot$ are trivial.
	\item Assume $A \equiv P(m)$ for some $m \in D_n^k$. By the definition of $\Vdash_k$, $\mathcal{M}_S, n \models P(m) \Leftrightarrow \mathcal{M}_k, n \models P(m)$.
	\item The cases for $A \equiv \neg B$, and $A \equiv B \lor C$ are clear by the induction hypothesis.
	\item Assume $A \equiv \forall u B(u)$. Then
		\begin{align*}
		\mathcal{M}_S, n \models \forall u B(u) &\iff \mathcal{M}_S, n \models B(m) \text{ for all } m \in D_n,  \\
		&\iff \mathcal{M}_S, n \models B(n), \ldots, \mathcal{M}_S, n \models B(k+1) \text{ and}  \\
		&\hspace{10.5mm} \mathcal{M}_S, n \models B(m) \text{ for all } m \geq k+2.  \tag{$\star$}
		\end{align*}
	By Lemma \ref{L1}, the statement $(\star)$ is equivalent to $\mathcal{M}_S, n \models B(k+2)$. Thus
		\begin{align*}
		\mathcal{M}_S, n \models \forall u B(u) &\iff \mathcal{M}_S, n \models B(n), \ldots, \mathcal{M}_S, n \models B(k+2), \\
		& \iff \mathcal{M}_k, n \models B(n), \ldots, \mathcal{M}_k, n \models B(k+2), \tag{I.H.} \\
		& \iff \mathcal{M}_k, n \models \forall u B(u). 
		\end{align*}
	\item If $A \equiv \Box B$, then
		\begin{equation*}
		\mathcal{M}_S, n \models \Box B  \iff \mathcal{M}_S, m \models B \text{ for all }m < n.
		\end{equation*}
	Since $D_n^k \subseteq D_m^k$ for any $m < n$, $B$ is an $\mathcal{L}$-sentence with parameters from
	$\bigcap_{m < n} D_m^k$, and hence
		\begin{align*}
		\mathcal{M}_S, m \models B \text{ for all }m < n & \iff \mathcal{M}_k, m \models B \text{ for all } m < n,  \tag{I.H.} \\
		& \iff  \mathcal{M}_k, n \models \Box B. 
		\end{align*}
	\end{itemize}
\end{proof}

\begin{lem}\label{L3}
Fix $k \in \mathbb{N}$. For any $\mathcal{L}$-sentence $A$, if $\mathcal{M}_k \models A \leftrightarrow \forall u \Box \left( A \to P(u) \right)$, then for any $n \leq k$,
	\begin{center}
	$\mathcal{M}_k , n \models A$ $\iff$ $n$ is even.
	\end{center}
\end{lem}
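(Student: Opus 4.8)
The plan is to argue by induction on $n$, exploiting the hypothesis that $A$ is a fixed-point of $\forall u \Box(p \to P(u))$ throughout $\mathcal{M}_k$. Since $\mathcal{M}_k \models A \leftrightarrow \forall u \Box(A \to P(u))$, at each world $n \leq k$ the truth of $A$ coincides with the truth of $\forall u \Box(A \to P(u))$. So it suffices to determine, world by world, exactly when this latter formula holds, and to show that this happens precisely at the even worlds.

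First I would unwind the semantics of $\forall u \Box(A \to P(u))$ at $n$. Because $m \prec_k v$ holds iff $v < m$, the worlds accessible from $n$ are exactly those $v$ with $v < n$, and the quantifier $\forall u$ ranges over $D_n^k = \{n, n+1, \ldots, k+2\}$. Hence $\mathcal{M}_k, n \models \forall u \Box(A \to P(u))$ \emph{fails} iff there exist $a \in D_n^k$ and $v < n$ with $\mathcal{M}_k, v \models A$ and $\mathcal{M}_k, v \not\models P(a)$. By the definition of $\Vdash_k$, the second condition says exactly $a = v+1$.

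The crux is a short domain computation isolating the only possible witness. Membership $a \in D_n^k$ forces $a \geq n$, while $v < n$ forces $a = v+1 \leq n$; together these give $a = n$ and $v = n-1$. Therefore $\forall u \Box(A \to P(u))$ fails at $n$ precisely when $n \geq 1$ and the world $n-1$ satisfies $A$. By the induction hypothesis $\mathcal{M}_k, n-1 \models A$ iff $n-1$ is even, so the formula fails at $n$ iff $n-1$ is even, i.e. iff $n$ is odd; the base case $n = 0$ has no accessible world and so the formula holds vacuously, consistent with $0$ being even. Combining these with the fixed-point hypothesis yields $\mathcal{M}_k, n \models A$ iff $n$ is even, completing the induction.

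I expect the main difficulty here to be bookkeeping rather than anything conceptual: one must keep straight that the accessibility relation sends $n$ downward to smaller worlds, that the domain $D_n^k$ contracts from below as $n$ grows, and that the valuation of $P$ at $v$ falsifies exactly $P(v+1)$, thereby encoding the successor relation. The single delicate point is the inequality $n \leq a = v+1 \leq n$, which simultaneously uses $a \in D_n^k$ and $v < n$ to pin down $v = n-1$ as the unique candidate witness; everything else is a routine evaluation driven by the induction hypothesis.
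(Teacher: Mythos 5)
Your proof is correct and follows essentially the same route as the paper's: induction on $n$, using the fixed-point hypothesis to reduce to evaluating $\forall u \Box(A \to P(u))$, with the key step being that $a \in D_n^k$ and $a = v+1 \leq n$ pin down $v = n-1$ as the only possible failure witness. The paper merely packages this as two separate implications with a case split on $l < n-1$ versus $l = n-1$, whereas you run it as a single biconditional chain; the content is identical.
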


\begin{proof}
Induction on $n$.

Assume $n = 0$. Since $\mathcal{M}_k, 0 \models \Box ( A \to P(m) )$ for any $m \in D_0^k$, we have $\mathcal{M}_k, 0 \models \forall u \Box ( A \to P(u) )$. By the assumption, $\mathcal{M}_k, 0 \models A$.
	
(Inductive case) Assume Lemma holds for $m < n$.
	\begin{itemize}
	\item[($\Rightarrow$)]
	Suppose that $n$ is odd. Since $\mathcal{M}_k, {n-1} \models A$ and
	$\mathcal{M}_k, {n-1} \not \models P(n)$, we have $\mathcal{M}_k, n \not \models \Box ( A \to P(n) )$.
	This implies $\mathcal{M}_k, n \not \models \forall u \Box (A \to P(u) )$. By the assumption,
	$\mathcal{M}_k, n \not \models A$.
	\item[($\Leftarrow$)]
	Suppose that $n\neq 0$ and $n$ is even. We claim that
	$\mathcal{M}_k, n \models \Box ( A \to P(m) )$ for any $m \in D_n^k$. Take an arbitrary $l < n$.
	If $l < n-1$, then for every $m \in D_n^k$, $l+1 < n \leq m$, and hence $m \neq l+1$. Therefore for every $m \in D_n^k$,
	$\mathcal{M}_k, l \models P(m)$. This implies that for every $l < n-1$ and $m \in D_n^k$, $\mathcal{M}_k, l \models A \to P(m)$.

	If $l = n-1$, then $l$ is odd. By the induction hypothesis, $\mathcal{M}_k, l \not \models A$, and hence
	for every $m \in D_n^k$, $\mathcal{M}_k, l \models A \to P(m)$.
	
	We obtain that for every $l < n$ and $m \in D_n^k$, $\mathcal{M}_k, l \models A \to P(m)$, and hence the claim is verified.
	Thus, $\mathcal{M}_k, n \models \forall u \Box (A \to P(u) )$. By the assumption,
	$\mathcal{M}_k, n \models A$.
	\end{itemize}
\end{proof}

Conforming to \Smor 's argument, we prove the following theorem.

\begin{thm}\label{T1}
The class $\FIFD$ does not have the fixed-point property.
\end{thm}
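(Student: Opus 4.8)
The plan is to argue by contradiction, using Smory\'nski's formula $A(p) \equiv \forall u \Box(p \to P(u))$ from Theorem \ref{SmT}, which is modalized in $p$ and contains only the predicate symbol $P$. Suppose $\FIFD$ had the fixed-point property. Then there would be a single $\mathcal{L}$-sentence $B$, containing only $P$, such that $\mathcal{F} \models B \leftrightarrow \forall u \Box(B \to P(u))$ for every frame $\mathcal{F}$ in $\FIFD$. The goal is to show that such a uniform $B$ cannot exist, because it would be forced to define exactly the even worlds of Smory\'nski's model $\mathcal{M}_S$, contradicting Claim \ref{Sm}.

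First I would fix an arbitrary $k \in \mathbb{N}$ and recall that the frame underlying $\mathcal{M}_k$ lies in $\FIFD$. Since frame validity means validity under every interpretation, it licenses passing to the particular interpretation $\Vdash_k$, so $\mathcal{M}_k \models B \leftrightarrow \forall u \Box(B \to P(u))$. This is precisely the hypothesis of Lemma \ref{L3}, whence for every $n \le k$ one has $\mathcal{M}_k, n \models B$ exactly when $n$ is even.

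Next I would transfer this information back to $\mathcal{M}_S$. Since $B$ is an $\mathcal{L}$-sentence containing only $P$ and, being a sentence, carries no parameters, Lemma \ref{L2} applies and yields $\mathcal{M}_S, n \models B \iff \mathcal{M}_k, n \models B$ for every $n \le k$. Combining with the previous step gives $\mathcal{M}_S, n \models B$ iff $n$ is even, for all $n \le k$. As $k$ was arbitrary, this equivalence holds for every $n \in \mathbb{N}$, so the set $\{ n \in \mathbb{N} \mid \mathcal{M}_S, n \models B \}$ equals the set of even natural numbers, which is neither finite nor co-finite. This contradicts Claim \ref{Sm}, and the theorem follows.

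I expect the only delicate points to be bookkeeping rather than genuine obstacles: verifying that frame validity in $\FIFD$ indeed licenses passing to the concrete model $\mathcal{M}_k$, and checking that $B$ satisfies the parameter and signature hypotheses of Lemmas \ref{L2} and \ref{L3}. The conceptual heart is the observation that a single candidate $B$ would have to behave like the even-world predicate simultaneously across all the finitizations $\mathcal{M}_k$, which is exactly what the infinite model $\mathcal{M}_S$ forbids; this is the mechanism by which the failure on an infinite frame propagates to the failure of a \emph{uniform} fixed-point on the finite frames of $\FIFD$.
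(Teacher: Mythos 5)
Your proposal is correct and follows essentially the same route as the paper: it combines Claim \ref{Sm}, the transfer Lemma \ref{L2}, and the parity Lemma \ref{L3} on the finitized models $\mathcal{M}_k$. The only difference is presentational — you argue by contradiction, concluding that a uniform fixed-point $B$ would define exactly the even worlds of $\mathcal{M}_S$, whereas the paper argues directly by using Claim \ref{Sm} to pick a single witness $k$ at which the equivalence fails in $\mathcal{M}_k$; the two are logically interchangeable.
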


\begin{proof}
Let $A$ be any $\mathcal{L}$-sentence containing only $P$. 
It suffices to show that there is $k \in \mathbb{N}$ such that $\mathcal{M}_k \not \models A \leftrightarrow \forall u \Box ( A \to P(u) )$. By Claim \ref{Sm}, the set $\{ n \in \mathbb{N} \mid \mathcal{M}_S, n \models A \}$ is either finite or co-finite. Then for some $k \in \mathbb{N}$, either
\begin{center}
$k$ is odd and $\mathcal{M}_S, k \models A$ \quad or \quad $k$ is even and $\mathcal{M}_S, k \not \models A$. 
\end{center}
%Let $\mathcal{M}_k$ be the Kripke model described as above. 
By Lemma \ref{L2}, $\mathcal{M}_S, k \models A \Leftrightarrow \mathcal{M}_k, k \models A$. Therefore we have either
\begin{center}
$k$ is odd and $\mathcal{M}_k, k \models A$ \quad or \quad $k$ is even and $\mathcal{M}_k, k \not \models A$. 
\end{center}
By Lemma \ref{L3}, we conclude
$\mathcal{M}_k \not \models A \leftrightarrow \forall u \Box ( A \to P(u) )$. 
\end{proof}

\begin{cor}
The classes $\FH$ and $\FI$ do not have the fixed-point property.
\end{cor}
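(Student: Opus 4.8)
The plan is to exploit the simple observation that the fixed-point property is inherited by subclasses, so that its \emph{failure} propagates upward to larger classes. Concretely, suppose $\mathsf{C}' \subseteq \mathsf{C}$ and that $\mathsf{C}$ has the fixed-point property. Given a modalized $\mathcal{L}'$-formula $A(p)$, the fixed-point property for $\mathsf{C}$ furnishes an $\mathcal{L}$-formula $B$, containing only predicate symbols occurring in $A$, with $\mathcal{F} \models B \leftrightarrow A(B)$ for every $\mathcal{F} \in \mathsf{C}$; since $\mathsf{C}' \subseteq \mathsf{C}$, the very same $B$ witnesses the fixed-point property for $\mathsf{C}'$. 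Taking the contrapositive: if $\mathsf{C}'$ lacks the fixed-point property and $\mathsf{C}' \subseteq \mathsf{C}$, then $\mathsf{C}$ lacks it as well.

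I would then apply this to the chain of inclusions $\FIFD \subseteq \FI \subseteq \FH$ established in Section 2. By Theorem \ref{T1} the class $\FIFD$ does not have the fixed-point property, so neither do the larger classes $\FI$ and $\FH$, which is exactly the assertion of the corollary.

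More explicitly, the same formula $A(p) \equiv \forall u \Box ( p \to P(u) )$ used in the proof of Theorem \ref{T1} witnesses the failure for $\FI$ and $\FH$ directly: the finitizations $\mathcal{M}_k$ all have frames lying in $\FIFD$, hence in $\FI$ and in $\FH$, and for any candidate $\mathcal{L}$-sentence $B$ containing only $P$ the proof produced some $k$ with $\mathcal{M}_k \not\models B \leftrightarrow A(B)$. This contradicts the fixed-point property for either larger class, since such a $B$ would be required to satisfy $\mathcal{F} \models B \leftrightarrow A(B)$ for \emph{every} frame $\mathcal{F}$ in the class.

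Since the argument reduces to a one-line monotonicity remark combined with the inclusions already recorded, there is essentially no obstacle to overcome; the only point requiring care is to confirm that the witnessing frames genuinely lie in the larger classes, which is immediate from $\FIFD \subseteq \FI \subseteq \FH$.
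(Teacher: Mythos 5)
Your proposal is correct and is exactly the argument the paper intends: the corollary is stated without proof precisely because the fixed-point property passes down to subclasses, so its failure for $\FIFD$ (Theorem \ref{T1}) propagates up along $\FIFD \subseteq \FI \subseteq \FH$. Your extra remark that the witnessing models $\mathcal{M}_k$ lie in all three classes is a harmless elaboration of the same point.
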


\section{The fixed-point theorem for $\QK + \Box^{n+1} \bot$ and the local fixed-point property for $\FH$}\label{QK}

In this section, we prove the fixed-point theorem for $\mathbf{QK} + \Box^{n+1} \bot$. 
Consequently, we show the class $\FH$ has the local fixed-point property.

\begin{thm}\label{T2}
Let $n \in \mathbb{N}$, and suppose that an $\mathcal{L}'$-formula $A(p)$ is modalized in $p$. Then there is an $\mathcal{L}$-formula $B$ such that $B$ contains only predicate symbols and free variables occurring in $A(p)$, and
	\begin{equation*}
	\QK \prove \Box^{n+1} \bot \to ( B \leftrightarrow A(B) ).
	\end{equation*}
Moreover, such a formula $B$ is effectively calculable from $A(p)$. 
\end{thm}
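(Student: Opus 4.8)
The plan is to prove this by induction on the modal depth of the logic, exploiting the key fact that in $\QK + \Box^{n+1}\bot$, boxes effectively "bottom out" after $n+1$ nestings. The crucial observation is that since $A(p)$ is modalized in $p$, every occurrence of $p$ sits under at least one box. Under the hypothesis $\Box^{n+1}\bot$, a formula of the form $\Box C$ behaves, relative to what lies below, as though we are working one level down in a shorter chain, i.e.\ under $\Box^{n}\bot$. This suggests an induction on $n$ where the base case $n=0$ has $\Box\bot$ true, so every boxed subformula is vacuously equivalent to $\top$ at the relevant worlds, and the fixed-point can be computed directly.

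I would proceed as follows. First, treat the base case: when $n=0$ we have $\Box\bot$, which forces every formula $\Box C$ to be equivalent (under the hypothesis) to $\top$. Since $A(p)$ is modalized in $p$, replacing each maximal boxed subformula of $A(p)$ by a $p$-free formula eliminates $p$ entirely, and the resulting $p$-free formula $B$ (which simply reads off the Boolean/quantifier structure with all boxes collapsed) satisfies $\QK \prove \Box\bot \to (B \leftrightarrow A(B))$ trivially, since $A(B)$ and $A$ agree up to the boxed parts that are all $\top$. For the inductive step, suppose the theorem holds for $n$. Given $A(p)$ modalized in $p$ and working under $\Box^{n+2}\bot$, I would decompose $A(p)$ by looking at its maximal boxed subformulas $\Box C_1(p), \ldots, \Box C_m(p)$; inside each box, the formula $C_i(p)$ is governed by $\Box^{n+1}\bot$ at successor worlds, so by the induction hypothesis each $C_i(p)$ admits a fixed-point, and these can be assembled. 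The substitution lemma (Lemma \ref{L16}) is the engine that lets me replace provably equivalent subformulas under a box while preserving the overall equivalence, and the modalized-in-$p$ refinement of that lemma (needing only $\Box(F\leftrightarrow G)$ rather than $\boxdot(F\leftrightarrow G)$) is exactly what makes the nesting work.

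The main technical obstacle will be handling the quantifiers and free variables correctly during substitution. Because we are in predicate logic, the fixed-point $B$ must respect the side condition that it contains only predicate symbols and free variables occurring in $A(p)$, and substitution of $B$ for $p$ must not capture or release variables improperly — this is precisely why the substitution lemma carries the hypothesis that $F,G$ contain no free variables bounded in $A(p)$. I expect the delicate part to be ensuring that when I descend through a $\forall u$ or into a box and invoke the induction hypothesis, the resulting fixed-point components have the right free variables so that the substitution lemma applies at each stage. Tracking this bookkeeping through the inductive assembly, and verifying that the effective construction indeed only introduces symbols already present in $A(p)$, is where the care lies.

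Finally, I would record the effectivity claim: the construction above is explicitly recursive — at each stage one identifies the maximal boxed subformulas, recursively computes their fixed-points under the shorter chain via the induction hypothesis, and substitutes — so the whole procedure is an algorithm that outputs $B$ from $A(p)$ and $n$. The consequence for $\FH$ then follows by combining this theorem with the fact that a frame of finite height $h$ validates $\Box^{h+1}\bot$, so that the $p$-free formula $B$ produced for the appropriate $n$ yields $\mathcal{F} \models B \leftrightarrow A(B)$ for each such frame, giving the local fixed-point property.
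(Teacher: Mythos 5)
Your base case is exactly right and coincides with the paper's: since $A(p)$ is modalized in $p$, replacing every boxed subformula of depth $0$ by $\top$ yields a $p$-free formula $A_0$, and $\QK \prove \Box\bot \to (A(D) \leftrightarrow A_0)$ for any $D$, so $A_0$ serves under $\Box\bot$. The inductive step, however, has a genuine gap. You propose to descend into the maximal boxed subformulas $\Box C_1(p),\dots,\Box C_m(p)$ and apply the induction hypothesis to each $C_i(p)$; but the bodies $C_i(p)$ are in general \emph{not} modalized in $p$ (for $A(p)\equiv \Box p$ the body is $p$ itself), so the induction hypothesis does not apply to them. Moreover, even granting each $C_i$ its own fixed point $D_i$ with $C_i(D_i)\leftrightarrow D_i$, it is unclear what ``assembling'' these means: the target equation $B\leftrightarrow A(B)$ requires one and the same formula $B$ to be substituted uniformly for all occurrences of $p$ across all the $C_i$, and separate per-box fixed points give you no such $B$. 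The object to iterate is $A$ itself, not its boxed parts: take $A_0$ as above and, roughly, $A_{k+1}:\equiv A(A_k)$ (the paper additionally truncates $A_{k+1}$ at depth $k+1$ so that the bookkeeping of Lemma \ref{L15} goes through), then prove the stability lemma $\QK\prove\Box^{k+1}\bot\to(A_m\leftrightarrow A_k)$ for $m\geq k$ (Lemma \ref{L6}), from which $\QK\prove\Box^{n+1}\bot\to(A_n\leftrightarrow A(A_n))$ follows (Lemma \ref{L7}).

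A second, smaller problem: you lean on the substitution lemma (Lemma \ref{L16}) as ``the engine,'' but that lemma is proved in $\QKf$ --- its box case uses $\Box E\to\Box\boxdot E$, i.e.\ axiom $\mathbf{4}$ --- whereas the theorem asserts derivability in $\QK$. This is precisely why the paper proves a separate, depth-indexed substitution lemma (Lemma \ref{L5}) whose hypotheses $\Box^{n-i}\left(\Box^{i+1}\bot\to(C_i\leftrightarrow D_i)\right)$ supply at each modal depth exactly the equivalence needed there, with no appeal to transitivity. Your free-variable worries are legitimate but are the easy part; the real work lies in the two points above. The effectivity claim and the deduction of the local fixed-point property for $\FH$ are fine as you state them.
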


Before proving Theorem \ref{T2}, we give some definitions, and prove several lemmas. 

\begin{defi}\leavevmode \label{D1}
	\begin{enumerate}
	\item Let $A$ be an $\mathcal{L}'$-formula, and $B$ be a subformula of $A$. The \textit{depth} of an occurrence of $B$ in $A$ is the total number of subformulas $\Box C$ of $A$, containing the occurrence of $B$, not $B$ itself.
	\item For an $\mathcal{L}'$-formula $A$, $A^{\top (n)}$ denotes the formula obtained from $A$
	by replacing every occurrence of the form $\Box B$ of depth $n$ by $\top$.
	\item For an $\mathcal{L}'$-formula $A(p)$, $A(p) [ B_0, \ldots, B_n]$ denotes the formula obtained from $A(p)$ by substituting $B_i$ for all occurrences of $p$ of depth $i$ for each $i \leq n$, respectively.
	\end{enumerate}
\end{defi}

For instance, put $A(p) : \equiv \Box \left( p \to  \forall u ( Q(u) \to \Box p) \right)$. Then the depth of $A$ is $0$, and the depth of $\Box p$ is $1$.
By Definition \ref{D1}.2, 
\begin{gather*}
A^{\top(0)} \equiv \top, \quad A^{\top(1)} \equiv \Box \left( p \to \forall u \left( Q(u) \to \top \right) \right), \text{ and } A^{\top(2)} \equiv A.
\end{gather*}
The depth of the left $p$ is $1$, and the depth of the right $p$ is $2$. By Definition \ref{D1}.3, 
\begin{gather*}
A(p) [ B_0, B_1, B_2] \equiv \Box \left( B_1 \to \forall u \left( Q(u) \to \Box B_2 \right) \right).
\end{gather*}

The following lemma immediately follows from Definition \ref{D1}.

\begin{lem}\label{L15}
Let $m, n \in \mathbb{N}$ with $m \geq n$. Let $A(p)$ be any $\mathcal{L}'$-formula,  and $B_0, \ldots B_m$ be any $\mathcal{L}$-formulas.
Then the followings hold:
	\begin{enumerate}
	\item $A^{\top(n)}$ contains only occurrences of $p$ of depth $\leq n$.
	Thus $A^{\top(n)} (p) \left[ B_0, \ldots, B_n \right]$ is an $\mathcal{L}$-formula;
	\item ${\left( A^{\top(m)} \right)}^{\top(n)} \equiv A^{\top(n)}$;
	\item ${\left( A(p) \left[ B_0, \ldots, B_m\right]\right)}^{\top(n)} \equiv A^{\top(n)}(p) \left[ B_0, \ldots, B_n \right]$.
	\end{enumerate} 
\end{lem}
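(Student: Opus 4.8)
The plan is to prove all three assertions by a single induction on the construction of the $\mathcal{L}'$-formula $A(p)$, exploiting the fact that each of the operations $(\cdot)^{\top(n)}$ and $(\cdot)[B_0,\ldots,B_n]$ from Definition \ref{D1} is governed entirely by modal depth, and that depth is changed only by the modal clause $A \equiv \Box B$. I would establish the first assertion at the outset, since it guarantees that the substitution appearing on the right-hand side of the third assertion removes every surviving occurrence of $p$ and hence lands in $\mathcal{L}$, so that $A^{\top(n)}(p)[B_0,\ldots,B_n]$ is well-formed. The driving observation is that passing under one box decrements the relevant index: a subformula sits at depth $d$ in $\Box B$ exactly when it sits at depth $d-1$ in $B$. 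Consequently, for $n \geq 1$ one has $(\Box B)^{\top(n)} \equiv \Box(B^{\top(n-1)})$, while for $n = 0$ the outermost box, being the unique box of depth $0$, is erased and $(\Box B)^{\top(0)} \equiv \top$; likewise, substituting $[B_0,\ldots,B_m]$ into $\Box B$ amounts to prefixing $\Box$ to the result of substituting the shifted list $[B_1,\ldots,B_m]$ into $B$. These two decompositions, fed the induction hypothesis for $B$ at the shifted parameters $n-1 \leq m-1$, are what make the induction close.

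For the base cases, when $A$ is $\top$, $\bot$, or atomic there is neither an occurrence of $p$ nor a box, so all three assertions are immediate, and when $A \equiv p$ the occurrence has depth $0$ and the claims reduce to unwinding the definitions. The Boolean cases $A \equiv \neg B$ and $A \equiv B \to C$ and the quantifier case $A \equiv \forall u B$ are routine: none of these connectives alters modal depth, so each of the three operations distributes over the connective and the identities follow directly from the induction hypothesis. Since $p$ is a propositional variable, the substitution of an $\mathcal{L}$-formula $B_i$ involves no predicate arguments, and any variable capture by $\forall u$ occurs identically on both sides, so the syntactic identities $\equiv$ are unaffected.

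The essential case is $A \equiv \Box B$, and this is where the work lies. For the third assertion I would split on whether $n = 0$ or $n \geq 1$. When $n = 0$ both sides collapse to $\top$, as the single depth-$0$ box is deleted before any surviving substitution can matter. When $n \geq 1$, I apply the two decompositions above to the left-hand side to rewrite it as $\Box$ applied to $\left(B(p)[B_1,\ldots,B_m]\right)^{\top(n-1)}$, invoke the induction hypothesis for $B$ at parameters $n-1$ and $m-1$, and reassemble; the right-hand side $A^{\top(n)}(p)[B_0,\ldots,B_n]$ unfolds, via the first assertion and the same decompositions, to $\Box$ applied to $B^{\top(n-1)}(p)[B_1,\ldots,B_n]$, and the two coincide. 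The second assertion is handled by the identical $n = 0$ / $n \geq 1$ split, with no substitution present and the induction hypothesis $\left(B^{\top(m-1)}\right)^{\top(n-1)} \equiv B^{\top(n-1)}$ supplying the inner step.

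I expect the main obstacle to be organizational rather than conceptual: keeping the two shifting indices synchronized in the modal case of the third assertion, simultaneously lowering the truncation level from $n$ to $n-1$ and dropping the head $B_0$ to pass from the list $[B_0,\ldots,B_m]$ to $[B_1,\ldots,B_m]$, while verifying that truncation at depth $n$ annihilates precisely the substitutions $B_{n+1},\ldots,B_m$ planted at depth greater than $n$ and leaves $B_0,\ldots,B_n$ untouched, so that only these match the shortened list on the right. The first assertion is exactly what is needed here, since it certifies that after truncation every remaining occurrence of $p$ has depth $\leq n$, making the list $[B_0,\ldots,B_n]$ sufficient and confirming that the right-hand side is a genuine $\mathcal{L}$-formula.
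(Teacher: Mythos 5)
The paper offers no proof of this lemma at all (it is asserted to follow immediately from Definition \ref{D1}), so your induction on the construction of $A(p)$ is the natural way to substantiate it, and for parts 1 and 2 your argument is correct and complete in outline: the decompositions $(\Box B)^{\top(0)} \equiv \top$ and $(\Box B)^{\top(n)} \equiv \Box\left(B^{\top(n-1)}\right)$ for $n \geq 1$ are exactly the right engine, and the Boolean and quantifier cases are indeed transparent.

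Part 3, however, hides a genuine gap, and it sits precisely in the base case $A(p) \equiv p$ that you dismiss as ``unwinding the definitions.'' There the left-hand side is $\left(p[B_0,\ldots,B_m]\right)^{\top(n)} \equiv B_0^{\top(n)}$ while the right-hand side is $p^{\top(n)}[B_0,\ldots,B_n] \equiv B_0$, and these coincide syntactically only when $B_0$ contains no boxed subformula of depth $n$. Since the $B_i$ are arbitrary $\mathcal{L}$-formulas, the identity can fail: take $A(p) \equiv \Box p$, $m = n = 1$, $B_0 \equiv \top$, $B_1 \equiv \Box\top$; then the left side is $(\Box\Box\top)^{\top(1)} \equiv \Box\top$ while the right side is $\Box\Box\top$. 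The root cause is that the outer truncation $(\cdot)^{\top(n)}$, applied after the substitution, also sees the boxes contributed by the $B_i$, whereas the right-hand side leaves each $B_i$ intact. So part 3 is not literally an identity of formulas under Definition \ref{D1} as written; it holds only under the convention that the truncation acts on the boxes of $A$ alone, treating the substituted $B_i$ as opaque (which is evidently the intended reading, since the lemma is later applied with $B_i \equiv A_i$, formulas that do contain boxes). Your proof inherits exactly this failure, and your modal-case bookkeeping, while correct, pushes the problem down to this base case rather than resolving it. To close the gap you must either impose a hypothesis on the $B_i$, state the opaque-substitution convention explicitly, or weaken part 3 to a provable equivalence under $\Box^{n+1}\bot$ --- note that Lemma \ref{L4} already gives $\QK \prove \Box^{n+1}\bot \to \left(C \leftrightarrow C^{\top(n)}\right)$ for any $C$, so the discrepancy is provably harmless in every application, but that is a statement about equivalence in $\QK$, not about $\equiv$.
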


\begin{lem}\label{L4}
For any $n\in \mathbb{N}$ and $\mathcal{L}$-formula $A$,
	\begin{equation*}
	\QK \prove \Box^{n+1} \bot \to \left( A \leftrightarrow A^{\top (n)} \right).
	\end{equation*}
\end{lem}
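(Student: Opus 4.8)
The plan is to prove the statement by induction on the construction of $A$, establishing the displayed equivalence uniformly for all $n \in \mathbb{N}$ at once; the key point is that passing underneath a $\Box$ decreases the relevant depth by one, so the box case will invoke the induction hypothesis at parameter $n-1$.

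For the base case, if $A$ is atomic then $A$ contains no subformula of the form $\Box B$, so $A^{\top(n)} \equiv A$ and the equivalence $\Box^{n+1}\bot \to (A \leftrightarrow A)$ is provable in pure predicate logic. The propositional cases $A \equiv \neg B$ and $A \equiv B \to C$ are immediate from the induction hypothesis and propositional reasoning, since $A^{\top(n)}$ is formed by applying the same connective to $B^{\top(n)}$ (and $C^{\top(n)}$). For $A \equiv \forall u B$ we have $A^{\top(n)} \equiv \forall u B^{\top(n)}$; here I would use that $\Box^{n+1}\bot$ is a sentence, so that from the induction hypothesis $\QK \prove \Box^{n+1}\bot \to (B \leftrightarrow B^{\top(n)})$ one generalizes on $u$ and distributes $\forall$ over the implication to obtain $\QK \prove \Box^{n+1}\bot \to (\forall u B \leftrightarrow \forall u B^{\top(n)})$, exactly as in the quantifier case of Lemma \ref{L16}.

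The main case, and the only real obstacle, is $A \equiv \Box B$, where I split on $n$. If $n = 0$, then the outermost box has depth $0$, so by Definition \ref{D1}.2 we have $A^{\top(0)} \equiv \top$, and it suffices to show $\QK \prove \Box\bot \to \Box B$; this follows by necessitating the tautology $\bot \to B$ and applying the distribution axiom $\mathbf{Ax2}$, namely $\Box(\bot \to B) \to (\Box\bot \to \Box B)$. If $n \geq 1$, then the front box sits at depth $0$ and is untouched, while a box at depth $n$ in $A$ is exactly a box at depth $n-1$ in $B$; hence $A^{\top(n)} \equiv \Box\left( B^{\top(n-1)} \right)$. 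I would then apply the induction hypothesis to $B$ at parameter $n-1$, giving $\QK \prove \Box^{n}\bot \to (B \leftrightarrow B^{\top(n-1)})$, necessitate, and use $\mathbf{Ax2}$ to get $\QK \prove \Box^{n+1}\bot \to \Box(B \leftrightarrow B^{\top(n-1)})$. Finally the standard normal-modal derivation $\QK \prove \Box(E \leftrightarrow F) \to (\Box E \leftrightarrow \Box F)$ yields $\QK \prove \Box^{n+1}\bot \to (\Box B \leftrightarrow \Box B^{\top(n-1)})$, as required.

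The only delicate bookkeeping is making sure the depth accounting in the $\Box$ case matches the definition of $A^{\top(n)}$ (that depth counts the boxes strictly containing an occurrence), and that the induction hypothesis is phrased for all $n$ so that it can legitimately be instantiated at $n-1$; everything else reduces to necessitation together with the $\mathbf{K}$-axiom $\mathbf{Ax2}$ and propositional logic.
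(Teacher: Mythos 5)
Your proposal is correct and follows essentially the same route as the paper's proof: induction on the construction of $A$ with the statement quantified over all $n$, the quantifier case handled by generalization and distribution of $\forall$, and the $\Box$ case split into $n=0$ (where $A^{\top(0)}\equiv\top$ and $\QK\prove\Box\bot\to\Box B$) and $n>0$ (where $A^{\top(n)}\equiv\Box\bigl(B^{\top(n-1)}\bigr)$ and one applies the induction hypothesis at $n-1$, necessitation, and $\mathbf{Ax2}$). The depth-bookkeeping remark you flag is exactly the point the paper also makes explicit.
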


\begin{proof}
By the induction on the construction of $A$, we show that for any $n \in \mathbb{N}$, $\QK \prove \Box^{n+1} \bot \to \left( A \leftrightarrow A^{\top (n)} \right)$.
	\begin{itemize}
	\item  If $A$ is an atomic formula, then for any $n \in \mathbb{N}$, $A^{\top(n)} \equiv A$.
	Clearly $\QK \prove A \leftrightarrow A^{\top(n)}$, and hence $\QK \prove \Box^{n+1} \bot \to \left( A \leftrightarrow A^{\top (n)} \right)$.
	\item The cases for $A \equiv \neg B$ and $A \equiv B \to C$, Lemma clearly follows from
	the definition of $A^{\top(n)}$ and the induction hypothesis.
	\item  Suppose that $A \equiv \forall u B$, and Lemma holds for $B$. 
	In this case for any $n \in \mathbb{N}$, $A^{\top(n)} \equiv \forall u \left( B^{\top(n)} \right)$. 
	By the induction hypothesis, $\QK \prove \Box^{n+1} \bot \to \left( B \leftrightarrow B^{\top(n)} \right)$ and hence
	$\QK \prove \Box^{n+1} \bot \to \left( \forall u B \leftrightarrow \forall u \left( B^{\top(n)} \right) \right)$.
	Therefore $\QK \prove \Box^{n+1} \bot \to \left( A \leftrightarrow A^{\top(n)} \right)$.
	\item Suppose that $A \equiv \Box B$ and Lemma holds for $B$.
	We distinguish the following two cases. 
		\begin{itemize}
		\item If $n=0$, then $A^{\top(0)} \equiv \top$. Since $\QK \prove \Box \bot \to (\Box B \leftrightarrow \top)$
		for any $\mathcal{L}$-formula $B$, 
		$\QK \prove \Box \bot \to \left( A \leftrightarrow A^{\top(0)} \right)$.

		\item Suppose that $n >0$. By the inductive hypothesis for $B$, 
		$\QK \prove \Box^{n} \bot \to \left( B \leftrightarrow B^{\top(n-1)} \right)$.
		By the derivation of $\QK$, we have
		$\QK \prove \Box^{n+1} \bot \to \left( \Box B \leftrightarrow \Box \left(B^{\top(n-1)} \right) \right)$.
		Note that each occurrence of $\Box C$ in $\Box B$ of depth $\geq n$ is the one in $B$
		of depth $\geq n-1$. Therefore
		$A^{\top(n)} \equiv \left( \Box B \right)^{\top(n)} \equiv \Box \left(B^{\top(n-1)} \right)$.
		Thus, $\QK \prove \Box^{n+1} \bot \to \left( A \leftrightarrow A^{\top(n)}\right)$.
		\end{itemize}
	\end{itemize}
\end{proof}

\begin{lem}\label{L5}
Suppose that $A(p)$ is an $\mathcal{L}'$-formula containing only occurrences of $p$ of depth $\leq n$, and $\mathcal{L}$-formulas $C_0, \ldots, C_n$ and $D_0, \ldots, D_n$ contain no free variables which are bounded in $A(p)$. Then
	\begin{align*}
	\QK & \prove \Box^{n+1} \bot \land \bigwedge_{i \leq n} \Box^{n-i} \left( \Box^{i+1} \bot \to \left( C_i \leftrightarrow D_i \right) \right) \\
	& \hspace{30mm} \to \left( A(p) \left[ C_n, \ldots, C_0 \right]  \leftrightarrow A(p) \left[ D_n, \ldots, D_0 \right] \right).
	\end{align*}
\end{lem}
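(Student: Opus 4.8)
The plan is to argue by induction on the construction of $A(p)$, taking the induction hypothesis for \emph{all} $n$ at once (together with all admissible $\mathcal{L}$-formulas $C_0,\ldots,C_n,D_0,\ldots,D_n$), since the modal step will invoke it at the smaller value $n-1$. Throughout I abbreviate the antecedent as
\[
H_n :\equiv \Box^{n+1}\bot \land \bigwedge_{i \leq n}\Box^{n-i}\left(\Box^{i+1}\bot \to (C_i \leftrightarrow D_i)\right),
\]
and I write $A[\vec{C}] :\equiv A(p)[C_n,\ldots,C_0]$ and $A[\vec{D}] :\equiv A(p)[D_n,\ldots,D_0]$, so that by the substitution convention an occurrence of $p$ of depth $i$ is replaced by $C_{n-i}$ (resp. $D_{n-i}$). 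If $A(p)$ contains no occurrence of $p$, then $A[\vec{C}] \equiv A \equiv A[\vec{D}]$ and the claim is trivial, so I may assume $p$ occurs. The base case is $A(p) \equiv p$: here $p$ occurs at depth $0$, whence $A[\vec{C}] \equiv C_n$ and $A[\vec{D}] \equiv D_n$; the conjunct of $H_n$ of index $i=n$ is exactly $\Box^{n+1}\bot \to (C_n \leftrightarrow D_n)$, which together with the conjunct $\Box^{n+1}\bot$ yields $C_n \leftrightarrow D_n$. The cases $A(p) \equiv \neg B(p)$ and $A(p) \equiv B(p) \to B'(p)$ are immediate from the induction hypothesis by propositional reasoning in $\QK$, the depths of occurrences of $p$ (and hence the substituted formulas) being unchanged.

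For $A(p) \equiv \forall u B(p)$ the argument mirrors the quantifier step of Lemma \ref{L16}. The depths are unchanged, so the induction hypothesis gives $\QK \prove H_n \to (B[\vec{C}] \leftrightarrow B[\vec{D}])$. By assumption every free variable of the $C_i$ and $D_i$ is not bound in $A(p)$, so $u$ occurs free in none of them and hence $u$ is not free in $H_n$; generalization on $u$ followed by distribution of $\forall$ over $\leftrightarrow$ then yields $\QK \prove H_n \to (\forall u\,B[\vec{C}] \leftrightarrow \forall u\,B[\vec{D}])$.

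The modal case $A(p) \equiv \Box B(p)$ is the heart of the proof and the main obstacle, because the depth of every occurrence of $p$ drops by one on passing from $A$ to $B$. Thus $B(p)$ contains occurrences of $p$ only of depth $\leq n-1$ (and $n \geq 1$), and the depth-$i$ occurrence in $A$, substituted by $C_{n-i}$, becomes the depth-$(i-1)$ occurrence in $B$; reindexing gives $A[\vec{C}] \equiv \Box\left(B(p)[C_{n-1},\ldots,C_0]\right)$ and likewise for $D$. Applying the induction hypothesis at level $n-1$ produces $\QK \prove H_{n-1} \to \left(B[C_{n-1},\ldots,C_0] \leftrightarrow B[D_{n-1},\ldots,D_0]\right)$, and then necessitation together with the $\QK$-derivable principle $\Box(X \leftrightarrow Y) \to (\Box X \leftrightarrow \Box Y)$ yields $\QK \prove \Box H_{n-1} \to (A[\vec{C}] \leftrightarrow A[\vec{D}])$.

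To close the case it remains to verify the purely modal fact $\QK \prove H_n \to \Box H_{n-1}$. Distributing $\Box$ over the conjunction defining $H_{n-1}$ (using $\QK \prove \Box(X \land Y) \leftrightarrow \Box X \land \Box Y$) turns $\Box H_{n-1}$ into $\Box^{n+1}\bot \land \bigwedge_{i \leq n-1}\Box^{n-i}\left(\Box^{i+1}\bot \to (C_i \leftrightarrow D_i)\right)$, which is precisely $H_n$ with its $i=n$ conjunct deleted; hence the implication is mere conjunction weakening. Composing this with the previous implication completes the induction. I expect the only genuine delicacy to be the careful bookkeeping of the depth shift and the reindexing of the substitution lists in the $\Box$ case, and checking that the box-distribution identity lines up the indices of $\Box H_{n-1}$ exactly with those of $H_n$.
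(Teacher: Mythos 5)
Your proposal is correct and follows essentially the same route as the paper: induction on the structure of $A(p)$, with the base case read off from the $i=n$ conjunct, the Boolean and quantifier cases handled by the induction hypothesis (using that the bound variable is not free in the $C_i, D_i$), and the modal case done by applying the hypothesis at level $n-1$, necessitating, and observing that the boxed antecedent is exactly the original antecedent minus its $i=n$ conjunct. Your explicit isolation of the step $\QK \prove H_n \to \Box H_{n-1}$ is just a reorganization of the paper's ``by the derivation of $\QK$'' step, and the depth/index bookkeeping matches the paper's.
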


\begin{proof}
Induction on the construction of $A(p)$.
	\begin{itemize}
	\item Assume $A(p) \equiv p$. Then for any $n \in \mathbb{N}$, the depth of each occurrence of $p$ is
	$\leq n$, and $A(p)$ contains no free variables. 
	For any $\mathcal{L}$-formula $C_0, \ldots, C_n$ and $D_0, \ldots, D_n$,
	$\QK \prove \left( C_n \leftrightarrow D_n \right) \leftrightarrow  \left( C_n \leftrightarrow D_n \right)$,
	and hence
		\begin{equation*}
		\QK \prove \Box^{n+1} \bot
		\land \left( \Box^{n+1} \bot \to \left( C_n \leftrightarrow D_n \right) \right)
		\to \left( C_n \leftrightarrow D_n \right).
		\end{equation*}
	Adding the assumptions, we obtain
		\begin{equation*}
		\QK \prove \Box^{n+1} \bot \land
		\bigwedge_{i \leq n} \Box^{n-i} \left( \Box^{i+1} \bot \to \left( C_i \leftrightarrow D_i \right) \right)
		\to ( C_n \leftrightarrow D_n ).
		\end{equation*}
	Since $A(p) [ C_n, \ldots, C_0] \equiv C_n$ and $A(p)[ D_n, \ldots, D_0 ] \equiv D_n$,
	Lemma holds for $A(p)$.
	\item Suppose that $A(p)$ is one of the form $\neg B(p)$, $B(p) \to C(p)$ or $\forall u B(p)$.
	If $A(p)$ contains only the occurrences of $p$ of depth $\leq n$, then so does $B(p)$ and $C(p)$.
	Moreover, for any $\mathcal{L}$-formula $F$, if all free variables occurring in $F$ are not bounded in $A(p)$,
	then they are not bounded in $B(p)$ and $C(p)$, too.
	By the induction hypothesis and the derivation of predicate logic, Lemma holds for $A(p)$.
	\item Assume $A(p) \equiv \Box B(p)$. If $A(p)$ contains only the occurrences of $p$ of depth $\leq n$,
	$B(p)$ contains only the occurrence of $p$ of depth $\leq n-1$.
	Let $C_0, \ldots, C_n$ and $D_0, \ldots, D_n$ be $\mathcal{L}$-formulas satisfying the assumption of Lemma.
	Every free variables occurring freely in $C_i$ or $D_i$ occur freely in $B(p)$. By the induction hypothesis,
		\begin{align*}
		\QK & \prove \Box^n \bot 
		\land \bigwedge_{i \leq n-1} \Box^{n-1-i} \left( \Box^{i+1} \bot
		\to \left( C_i \leftrightarrow D_i \right) \right) \\
		& \hspace{30mm} \to \left( B(p) [ C_{n-1}, \ldots, C_0 ]
		\leftrightarrow B(p) [ D_{n-1}, \ldots, D_0 ] \right).
		\end{align*}
	By the derivation of $\QK$,
		\begin{align*}
		\QK & \prove \Box^{n+1} \bot 
		\land \bigwedge_{i \leq n-1} \Box^{n-i} \left( \Box^{i+1} \bot \to ( C_i \leftrightarrow D_i ) \right) \\
		& \hspace{18mm} \to \left( \Box \left( B(p) [ C_{n-1}, \ldots, C_0 ] \right)
		\leftrightarrow \Box \left( B(p) [ D_{n-1}, \ldots, D_0 ] \right) \right).
		\end{align*}
	Since $A(p)$ does not contain the occurrence of $p$ of depth $0$,
		\begin{align*}
		\Box \left( B(p) [ C_{n-1}, \ldots, C_0 ] \right) & \equiv A(p) [ C_n, \ldots, C_0 ], \text{ and } \\
		\Box \left( B(p) [ D_{n-1}, \ldots, D_0 ] \right) & \equiv A(p) [ D_n, \ldots, D_0 ].
		\end{align*}
	Therefore
		\begin{align*}
		\QK & \prove \Box^{n+1} \bot \quad \land \bigwedge_{i \leq n-1} \Box^{n-i} \left( \Box^{i+1} \bot \to ( C_i \leftrightarrow D_i ) \right) \\
		& \hspace{40mm} \to \left( A(p) [ C_n, \ldots, C_0 ] \leftrightarrow A(p) [ D_n, \ldots, D_0 ] \right).
		\end{align*}
	Adding the assumptions, we obtain
		\begin{align*}
		\QK & \prove \Box^{n+1} \bot 
		\land \bigwedge_{i \leq n} \Box^{n-i} \left( \Box^{i+1} \bot \to \left( C_i \leftrightarrow D_i \right) \right)
		\\
		& \hspace{40mm} \to \left( A(p) [ C_n, \ldots, C_0 ]  \leftrightarrow A(p) [ D_n, \ldots, D_0 ] \right).
		\end{align*}
	\end{itemize}
\end{proof}

In the remainder of this section, we fix an $\mathcal{L}'$-formula $A(p)$ which is modalized in $p$, i.e., $A(p)$ contains no occurrences of $p$ of depth $0$. By replacing variables appropriately, we assume that every free variable occurring in $A(p)$ does not occur in $A(p)$ as a bound variable. We define the sequence $\{ A_n \}_{n < \omega}$ of $\mathcal{L}$-formulas recursively as follows:
\begin{enumerate}
\item $A_0 :\equiv A^{\top(0)}(p) \left[ \top \right] \left(\equiv A^{\top(0)}(p) \right)$;
\item $A_{n+1} :\equiv A^{\top(n+1)}(p)\left[\top, A_n, \ldots, A_0\right]$.
\end{enumerate}

By the definition and Lemma \ref{L15}.1, every $A_n$ is an $\mathcal{L}$-formula and contains only predicate symbols and free variables occurring in $A(p)$.

\begin{lem}\label{L6}
For any $m, n \in \mathbb{N}$, if $m \geq n$, then $\QK \prove \Box^{n+1} \bot \to (A_m \leftrightarrow A_n)$.
\end{lem}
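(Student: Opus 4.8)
The plan is to prove the full statement by induction on $n$, establishing for each $n$ the claim ``for all $m \geq n$, $\QK \prove \Box^{n+1}\bot \to (A_m \leftrightarrow A_n)$'' in one go. The case $m = n$ is trivial, so I would fix $m > n$ and assume the claim for all smaller indices. The engine is to pass from $A_m$ to its depth-$n$ truncation $(A_m)^{\top(n)}$ via Lemma \ref{L4}, to recognize this truncation as a substitution instance of the single $\mathcal{L}'$-formula $A^{\top(n)}(p)$ that differs from $A_n$ only in the recursion indices, and then to match those indices by feeding the induction hypothesis into the substitution Lemma \ref{L5}.

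Concretely, I would first invoke Lemma \ref{L4} on the $\mathcal{L}$-formula $A_m$ to obtain $\QK \prove \Box^{n+1}\bot \to (A_m \leftrightarrow (A_m)^{\top(n)})$. Next, using Lemma \ref{L15}.2 and \ref{L15}.3, I would compute the truncation explicitly as $(A_m)^{\top(n)} \equiv A^{\top(n)}(p)[\top, A_{m-1}, \ldots, A_{m-n}]$, while by definition $A_n \equiv A^{\top(n)}(p)[\top, A_{n-1}, \ldots, A_0]$. Thus both are substitution instances of $A^{\top(n)}(p)$, which by Lemma \ref{L15}.1 contains $p$ only at depth $\leq n$; at depth $i$ (for $1 \leq i \leq n$) the first instance carries $A_{m-i}$ and the second carries $A_{n-i}$, while depth $0$ carries $\top$ in both.

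The central step is then to compare these two instances with Lemma \ref{L5}. In its notation I would take $C_n = D_n = \top$ and, for $i \leq n-1$, $C_i = A_{m-n+i}$ and $D_i = A_i$, so that $[C_n, \ldots, C_0]$ and $[D_n, \ldots, D_0]$ reproduce the two lists above; the variable side-condition holds because every free variable of each $A_j$ occurs in $A(p)$ and hence, by our standing assumption, is not bound in $A^{\top(n)}(p)$. The hypotheses $\Box^{n-i}(\Box^{i+1}\bot \to (C_i \leftrightarrow D_i))$ are exactly what the induction supplies: for $i = n$ the biconditional $\top \leftrightarrow \top$ is trivial, and for $i \leq n-1$ the induction hypothesis at level $i$ gives $\QK \prove \Box^{i+1}\bot \to (A_{m-n+i} \leftrightarrow A_i)$ (legitimate since $m-n+i \geq i$ and $i < n$), whence $n-i$ applications of necessitation yield the boxed form. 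Discharging these provable antecedents in Lemma \ref{L5} gives $\QK \prove \Box^{n+1}\bot \to ((A_m)^{\top(n)} \leftrightarrow A_n)$, and combining with the Lemma \ref{L4} step by propositional logic closes the induction.

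I expect the only delicate point to be the index bookkeeping: keeping straight that the depth-$i$ slot of $(A_m)^{\top(n)}$ holds $A_{m-i}$, an index shifted by $m-n$ relative to $A_n$'s slot $A_{n-i}$, and checking that this shift is precisely the one absorbed by invoking the induction hypothesis at level $i$ with parameter $m-n+i \geq i$. One must also confirm that $\Box^{n-i}$ is the correct number of necessitations, matching the depth at which $C_i$ and $D_i$ are substituted, so that the boxed hypotheses demanded by Lemma \ref{L5} line up exactly with the theorems produced by the induction.
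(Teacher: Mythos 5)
Your proposal is correct and follows essentially the same route as the paper: induction on $n$, truncating $A_m$ to depth $n$ via Lemma \ref{L4}, identifying $(A_m)^{\top(n)}$ and $A_n$ as two substitution instances of $A^{\top(n)}(p)$ via Lemma \ref{L15}, and matching the slots $A_{m-n+i}$ against $A_i$ by feeding the (strong) induction hypothesis into Lemma \ref{L5}. The only cosmetic difference is that the paper states the inductive step for $n+1$ and handles $n=0$ as a separate base case by direct computation, whereas your formulation absorbs it uniformly; the index bookkeeping in your version checks out.
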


\begin{proof}
Induction on $n$.
	\begin{itemize}
	\item Assume $n=0$, and take $m \geq 0$ arbitrarily. Then
		\begin{align*}
		A_m^{\top(0)} & \equiv {\left(A^{\top(m)}(p)[ \top, A_{m-1}, \ldots, A_0 ] \right)}^{\top(0)}, \\
		 & \equiv {\left( A^{\top(m)}\right)}^{\top(0)} (p) [ \top ], \tag{by Lemma \ref{L15}.3}\\
		 & \equiv A^{\top(0)} (p) [ \top ], \tag{by Lemma \ref{L15}.2}\\
		 & \equiv A_0.
		\end{align*}
	By Lemma \ref{L4}, $\QK \prove \Box \bot \to \left( A_m \leftrightarrow A_m^{\top(0)} \right)$.
	Thus we have $\QK \prove \Box \bot \to ( A_m \leftrightarrow A_0 )$.
	\item Suppose that Lemma holds for $\leq n$. Take $m+1 \geq n+1$ arbitrarily.
	Then by the induction hypothesis,
		\begin{equation*}
		\QK \prove \bigwedge_{i < n+1} \Box^{i+1} \bot \to ( A_{i+(m-n)} \leftrightarrow A_i ),
		\end{equation*}
	and hence
		\begin{align*}
		\QK \prove \bigwedge_{i < n+1} \Box^{n+1-i} ( \Box^{i+1} \bot \to (A_{i+(m-n)} \leftrightarrow A_i)).
		\end{align*}
	Note that $\QK \prove \Box^0 ( \Box^{n+2} \bot \to (\top \leftrightarrow \top))$,\footnote{Here $\Box^0 A \equiv A$.} and
	$A^{\top(n+1)}(p)$ contains no free variables which is bounded in each $A_i$. From them and
	by Lemma \ref{L5}, we obtain
		\begin{align}
		\QK &\prove \Box^{n+2} \bot \nonumber\\
		& \to \left( A^{\top(n+1)}(p) [ \top, A_m, \ldots, A_{m-n} ] 
		\leftrightarrow A^{\top(n+1)}(p) [ \top, A_n, \ldots, A_0] \right). \label{E9}
		\end{align}
	On the other hand, by Lemma \ref{L4}, $\QK \prove \Box^{n+2}\bot \to \left( A_{m+1} \leftrightarrow A_{m+1}^{\top(n+1)} \right)$.	
	Recall that
		\begin{align*}
		A_{m+1}^{\top(n+1)} & \equiv \left( A^{\top(m+1)}(p) [ \top, A_m, \ldots, A_0 ] \right)^{\top(n+1)}, \\
		& \equiv {\left( A^{\top(m+1)} \right)}^{\top(n+1)} (p) [ \top, A_m, \ldots, A_{m-n} ], \tag{by Lemma \ref{L15}.3} \\
		& \equiv A^{\top(n+1)} (p) [ \top, A_m, \ldots, A_{m-n} ], & \tag{by Lemma \ref{L15}.2}
		\end{align*}
	Thus
		\begin{align}
		\QK \prove \Box^{n+2}\bot \to \left( A_{m+1} \leftrightarrow A^{\top(n+1)} (p) [ \top, A_m, \ldots, A_{m-n} ] \right).
		\label{E10}
		\end{align}
	From (\ref{E9}) and (\ref{E10}), we conclude $\QK \prove \Box^{n+2} \bot \to ( A_{m+1} \leftrightarrow A_{n+1})$.
	\end{itemize}
\end{proof}

Let $B(p)$ be an $\mathcal{L}'$-formula. For $n \in \mathbb{N}$, we define
	\begin{equation*}
	B^n :\equiv B^{\top(n)}(p) [ A_n, \ldots, A_0 ].
	\end{equation*}
By Lemma \ref{L15}.1, the formula $B^n$ is an $\mathcal{L}$-formula. 
Since $A(p)$ is modalized in $p$, we obtain 
	\begin{align*}
	A^n & \equiv A^{\top(n)}(p) [A_n, A_{n-1}, \ldots, A_0], \\
	 & \equiv A^{\top(n)}(p) [ \top, A_{n-1}, \ldots, A_0 ], \\
	 & \equiv A_n.
	\end{align*}

\begin{lem}\label{L7}
For any $\mathcal{L}'$-formula $B(p)$ and $m,n \in \mathbb{N}$, if $m \geq n$, then
	\begin{equation*}
	\QK \prove \Box^{n+1} \bot \to \left( B^n \leftrightarrow B(A_m) \right).
	\end{equation*}
\end{lem}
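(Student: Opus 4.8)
The plan is to prove this directly from the three lemmas already in hand---Lemma \ref{L4}, Lemma \ref{L5}, and Lemma \ref{L6}---rather than by a fresh induction on $B(p)$, since Lemma \ref{L5} already packages exactly the substitution bookkeeping that is needed. Fix $m \geq n$. The idea is to route both $B^n$ and $B(A_m)$ through their common $\top(n)$-truncation. Regarding $B(A_m)$ as the bracket substitution $B(p)[A_m, \ldots, A_m]$ (the single formula $A_m$ placed at every depth), Lemma \ref{L15}.3 gives the syntactic identity
	\begin{equation*}
	\left( B(A_m) \right)^{\top(n)} \equiv B^{\top(n)}(p)[A_m, \ldots, A_m],
	\end{equation*}
and Lemma \ref{L4} yields $\QK \prove \Box^{n+1}\bot \to \left( B(A_m) \leftrightarrow \left( B(A_m) \right)^{\top(n)} \right)$. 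Hence it remains only to prove
	\begin{equation*}
	\QK \prove \Box^{n+1}\bot \to \left( B^n \leftrightarrow B^{\top(n)}(p)[A_m, \ldots, A_m] \right),
	\end{equation*}
that is, to compare the two substitution instances of $B^{\top(n)}(p)$ in which depth $i$ carries $A_{n-i}$ (this is $B^n$) versus $A_m$.

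For that comparison I would invoke Lemma \ref{L5} applied to the formula $B^{\top(n)}(p)$, which by Lemma \ref{L15}.1 contains only occurrences of $p$ of depth $\leq n$, taking $C_i :\equiv A_i$ and $D_i :\equiv A_m$ for each $i \leq n$. The antecedent of Lemma \ref{L5} then demands the conjuncts $\Box^{n-i}\left( \Box^{i+1}\bot \to ( A_i \leftrightarrow A_m ) \right)$ for $i \leq n$, and each of these is provable: since $m \geq n \geq i$, Lemma \ref{L6} gives $\QK \prove \Box^{i+1}\bot \to ( A_m \leftrightarrow A_i )$, and $n-i$ applications of necessitation lift this to the boxed form required. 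Lemma \ref{L5} therefore delivers $\QK \prove \Box^{n+1}\bot \to \left( B^{\top(n)}(p)[A_n, \ldots, A_0] \leftrightarrow B^{\top(n)}(p)[A_m, \ldots, A_m] \right)$, whose left-hand side is precisely $B^n$. Chaining this with the equivalence from Lemma \ref{L4} then closes the argument.

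The one point demanding care is the free-variable hypothesis of Lemma \ref{L5}: it must be checked that neither the $A_i$ nor $A_m$ contains a free variable that is bound in $B^{\top(n)}(p)$. Since every $A_i$ contains only free variables already occurring in the fixed formula $A(p)$, this holds whenever the bound variables of $B(p)$ avoid the free variables of $A(p)$---which is automatic in the intended application $B \equiv A$ by our standing convention that no free variable of $A(p)$ occurs bound in it, and for general $B(p)$ can be arranged by renaming the bound variables of $B(p)$, a renaming that preserves both $B^n$ and $B(A_m)$ up to provable equivalence. I expect this variable bookkeeping, together with keeping the depth-indexing of the bracket notation consistent between Lemma \ref{L15}.3 and Lemma \ref{L5}, to be the only genuinely delicate part; the logical core is just the two-step equivalence handed to us by Lemmas \ref{L4}, \ref{L5}, and \ref{L6}.
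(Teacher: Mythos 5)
Your argument is correct in substance but takes a genuinely different route from the paper. The paper proves Lemma \ref{L7} by a fresh induction on the construction of $B(p)$, parallel to the inductions in Lemmas \ref{L4} and \ref{L5}: Lemma \ref{L6} is invoked only in the base case $B(p)\equiv p$, and the $\Box$-case unwinds $B^n$ via the literal identity $\Box(C^{n-1})\equiv(\Box C)^n$. You instead derive the lemma non-inductively by routing both sides through the truncated skeleton $B^{\top(n)}(p)$: Lemma \ref{L4} handles $B(A_m)$ versus $B^{\top(n)}(p)[A_m,\ldots,A_m]$, and Lemma \ref{L5} with $C_i:\equiv A_i$, $D_i:\equiv A_m$ (its hypotheses discharged by Lemma \ref{L6} plus necessitation) handles the comparison with $B^n$; the indexing and the free-variable bookkeeping are handled correctly. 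What your approach buys is economy and transparency -- it exposes Lemma \ref{L5} as doing all the real work, whereas the paper uses Lemma \ref{L5} only inside the proof of Lemma \ref{L6} and then repeats a similar induction. What it costs is one delicate point you state too quickly: the claimed syntactic identity $\left(B(A_m)\right)^{\top(n)}\equiv B^{\top(n)}(p)[A_m,\ldots,A_m]$ is not literal, because the literal $\top(n)$-truncation of the $\mathcal{L}$-formula $B(A_m)$ also replaces boxed subformulas lying \emph{inside} the substituted copies of $A_m$ at global depth $n$, which the right-hand side leaves intact. (The paper's Lemma \ref{L15}.3 has the same issue and is best read as a convention about truncating the skeleton; but you additionally apply Lemma \ref{L4}, which is stated for the literal truncation, so the mismatch is live in your chain.) The gap is easily closed -- for instance, apply Lemma \ref{L4} to both $B(A_m)$ and $B^{\top(n)}(A_m)$ and observe that their literal $\top(n)$-truncations coincide, or reprove Lemma \ref{L4} for $\mathcal{L}'$-formulas with a substituted parameter -- but as written this step needs that extra justification. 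The paper's induction avoids the issue entirely since it only ever substitutes into already-truncated skeletons.
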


\begin{proof}
Induction on the construction of $B(p)$. Assume $m \geq n$.
	\begin{itemize}
	\item Assume $B(p) \equiv p$. In this case, $B^n \equiv B^{\top(n)}(p)[ A_n, \ldots, A_0 ] \equiv A_n$, and
	$B(A_m) \equiv A_m$. By Lemma \ref{L6}, $\QK \prove \Box^{n+1} \bot \to ( A_m \leftrightarrow A_n )$.
	Therefore $\QK \prove \Box^{n+1} \bot \to ( B^n \leftrightarrow B(A_m) )$.
	\item The cases for $B(p) \equiv \neg C(p)$ and $B(p) \equiv C(p) \to D(p)$ are clear.
	\item Assume $B(p) \equiv \forall u C(p)$ and Lemma holds for $C(p)$. By the induction hypothesis, 
	$\QK \prove \Box^{n+1} \bot \to ( C^n \leftrightarrow C(A_m))$. Recall that
	$\forall u (C^n) \equiv (\forall u C)^n$. By the generalization, we have
	$\QK \prove \Box^{n+1} \bot \to ( (\forall u C)^n \leftrightarrow \forall u C(A_m) )$, 
	i.e., $\QK \prove \Box^{n+1} \bot \to ( B^n \leftrightarrow B(A_m))$.
	\item Assume $B(p) \equiv \Box C(p)$ and Lemma holds for $C(p)$.
	We distinguish the following two cases. 
		\begin{itemize}
		\item If $n=0$, then we have $B^0 \equiv {(\Box C)}^0 \equiv {(\Box C)}^{\top(0)}(p) [ A_0] \equiv \top$.
		Since $\QK \prove \Box \bot \to \Box C(A_m)$, we obtain
		$\QK \prove \Box \bot \to ( B^0 \leftrightarrow B(A_m))$.
		\item Suppose that $n >0$. Take $m \geq n$ arbitrarily. Then $m > n-1$.
 		By the induction hypothesis for $C(p)$, $m$ and $n-1$,
		$\QK \prove \Box^n \bot \to \left( C^{n-1} \leftrightarrow C(A_m) \right)$. By the derivation of $\QK$, we have
		$\QK \prove \Box^{n+1} \bot \to \left( \Box (C^{n-1}) \leftrightarrow \Box C(A_m) \right)$. Since $B(p)$ contains 
		no occurrences of $p$ of depth $0$, we obtain
			\begin{align*}
			\Box (C^{n-1}) & \equiv \Box \left( C^{\top(n-1)}(p) [ A_{n-1},\ldots, A_0 ] \right) \\
			& \equiv {( \Box C)}^{\top(n)}(p)[ A_n, A_{n-1}, \ldots, A_0 ] \\
			& \equiv B^{n}.
			\end{align*}
		Thus, $\QK \prove \Box^{n+1}\bot \to \left( B^n \leftrightarrow B(A_m) \right)$.
		\end{itemize}
	\end{itemize}
\end{proof}

Here we are ready to prove Theorem \ref{T2}. 

\begin{proof}[Proof of Theorem \ref{T2}]
Let $A(p)$ be the fixed $\mathcal{L}'$-formula which is modalized in $p$, and it suffices to show that $A_n$ is a fixed-point of $A(p)$ in $\QK + \Box^{n+1}\bot$. By Lemma \ref{L7},  we obtain $\QK \prove \Box^{n+1} \bot \to \left(A^n \leftrightarrow A(A_n) \right)$.
Since $A^n \equiv A_n$, $\QK \prove \Box^{n+1} \bot \to ( A_n \leftrightarrow A(A_n))$. The formula $A_n$ contains only predicate symbols and free variables occurring in $A$. Thus, $A_n$ is a fixed-point of $A(p)$ in $\QK + \Box^{n+1}\bot$.
\end{proof}

\begin{rem}
In \cite{Sac99}, Sacchetti proved the fixed-point theorem for propositional modal logics $\mathbf{K} + \Box^{n+1} \bot$ without giving an algorithm for calculating fixed-points in these logics. 
Our proof of Theorem \ref{T2} provides such an algorithm even for the logics $\mathbf{K} + \Box^{n+1} \bot$. 
\end{rem}

\begin{cor}\label{C1}
The classes $\FH$, $\FI$ and $\FIFD$ have the local fixed-point properties.
\end{cor}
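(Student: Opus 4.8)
The plan is to derive the local fixed-point property for all three classes directly from Theorem \ref{T2}, by exploiting the fact that on a transitive frame of finite height the sentence $\Box^{n+1}\bot$ becomes valid for an appropriate $n$ determined by the frame.

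First I would isolate the key semantic fact linking the modal degree of the axiom to the geometry of the frame: for any model $\mathcal{M}$ over a frame $\mathcal{F} = \langle W, \prec, \{D_w\}_{w\in W}\rangle$ and any $w \in W$, one has $\mathcal{M}, w \models \Box^{n+1}\bot$ if and only if there is no $\prec$-chain $w \prec w_1 \prec \cdots \prec w_{n+1}$ of length $n+1$ starting at $w$ (the truth value is independent of the interpretation, since the formula is built from $\bot$ alone). By an easy induction on $n$ this is equivalent to $h(w) \leq n$ whenever the height is defined. Consequently, if $\mathcal{F} \in \FH$ and $n := h(\mathcal{F})$ is its finite height, then $h(w) \leq n$ for every $w \in W$, whence $\mathcal{F} \models \Box^{n+1}\bot$.

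Next I would observe that the local fixed-point property is inherited by subclasses: if $\mathsf{C}' \subseteq \mathsf{C}$ and $\mathsf{C}$ has the local fixed-point property, then so does $\mathsf{C}'$, since the fixed-point required for a frame $\mathcal{F} \in \mathsf{C}'$ is already furnished by $\mathcal{F}$ regarded as a member of $\mathsf{C}$. As $\FIFD \subseteq \FI \subseteq \FH$, it therefore suffices to treat the class $\FH$. So I would fix an $\mathcal{L}'$-formula $A(p)$ modalized in $p$ and a frame $\mathcal{F} \in \FH$, and set $n := h(\mathcal{F})$. By Theorem \ref{T2} there is an $\mathcal{L}$-formula $B$ (namely the formula $A_n$), containing only predicate symbols occurring in $A$, with $\QK \prove \Box^{n+1}\bot \to (B \leftrightarrow A(B))$. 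Since every theorem of $\QK$ is valid in every Kripke frame, $\mathcal{F} \models \Box^{n+1}\bot \to (B \leftrightarrow A(B))$ (reading validity, as in the paper, as validity of the universal closure). By the first step $\mathcal{F} \models \Box^{n+1}\bot$, and because $\Box^{n+1}\bot$ is a sentence true at every world, at each $w \in W$ and under every parameter assignment from $D_w$ we may detach it to obtain $w \models B \leftrightarrow A(B)$; hence $\mathcal{F} \models B \leftrightarrow A(B)$, as required.

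The argument is routine once Theorem \ref{T2} is available; the only point demanding care is the first step, namely matching the modal degree $n+1$ of the axiom $\Box^{n+1}\bot$ to the height of the frame and verifying $\mathcal{F} \models \Box^{n+1}\bot$ for $n = h(\mathcal{F})$. I would also stress the conceptual reason for the word \emph{local}: the witnessing fixed-point $B = A_n$ depends on $n = h(\mathcal{F})$, hence on the chosen frame, so this method yields only the local — and not the global — fixed-point property, in accordance with Theorem \ref{T1}.
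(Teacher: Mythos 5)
Your proposal is correct and follows essentially the same route as the paper's proof: reduce to $\FH$ by subclass inclusion, observe that $\mathcal{F}\models\Box^{n+1}\bot$ for $n=h(\mathcal{F})$, and then detach this antecedent from the $\QK$-theorem $\Box^{n+1}\bot\to(A_n\leftrightarrow A(A_n))$ supplied by Theorem \ref{T2} using soundness. The extra details you supply (the chain characterization of $\Box^{n+1}\bot$ and the explicit inheritance of the local property by subclasses) are points the paper leaves implicit, but the argument is the same.
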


\begin{proof}
It is sufficient to prove only the case for $\FH$. Let $\mathcal{F} = \langle W, \prec, {\{ D_w\}}_{w \in W} \rangle$ be a Kripke frame in the class $\FH$. Put $h(\mathcal{F})=n$. Then for any $w \in W$, $h(w) \leq n$, i.e., $\mathcal{F} \models \Box^{n+1} \bot$. Let $A(p)$ be any $\mathcal{L}'$-formula which is modalized in $p$. From Theorem \ref{T2}, we have $\QK \prove \Box^{n+1} \bot \to \left( A_n \leftrightarrow A(A_n) \right)$. Recall that $\QK \subseteq \QGL \subseteq \mathbf{MQ}(\FH)$. Thus we have $\mathcal{F} \models \Box^{n+1} \bot \to \left( A_n \leftrightarrow A(A_n) \right)$. From this and $\mathcal{F} \models \Box^{n+1} \bot$, we conclude $\mathcal{F} \models A_n \leftrightarrow A(A_n)$. The formula $A_n$ is indeed a local fixed-point of $A(p)$ in $\mathcal{F}$. 
\end{proof}

In Section 3, we proved that the class $\FIFD$ does not have the fixed-point property (Theorem \ref{T1}). Corollary \ref{C1} shows that $\mathbf{MQ}(\FIFD)$ is consistent with the fixed-point property, that is, there is a consistent extension of $\mathbf{MQ}(\FIFD)$ for which the fixed-point theorem holds.

In Section \ref{Sec2-1}, we mentioned that $\mathbf{MQ}(\BL)$ equals to $\mathbf{MQ}(\FH)$, and thus the classes $\BL$ and $\FH$ cannot be distinguished by the validity of formulas. On the other hand, $\BL$ does not have the local fixed-point property (Corollary \ref{C2}), and $\FH$ has the one (Corollary \ref{C1}). Hence we can capture some logical difference between the classes $\BL$ and $\FH$ through the local fixed-point property.

\section{Failure of the Craig interpolation property for $\NQGL$}\label{Sec5}

In this section, we prove that the logic $\NQGL$ does not enjoy the Craig interpolation property.

\begin{defi}
We say a logic $\mathbf{L}$ enjoys the Craig interpolation property if for any sentences $A$ and $B$, if $\mathbf{L}$ proves $A \to B$, then there exists a sentence $C$ containing only predicate symbols occurring in both $A$ and $B$ such that $\mathbf{L}$ proves $A \to C$ and $C \to B$. 
\end{defi}

\begin{thm}\label{T3}
The system $\NQGL$ does not have the Craig interpolation property.
\end{thm}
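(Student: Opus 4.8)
The plan is to derive the failure of interpolation from the failure of the (global) fixed-point property for $\FIFD$ established in Theorem \ref{T1}, via the standard route by which Craig interpolation yields Beth-style explicit definability. Throughout I would work with \Smor's formula $A(p) :\equiv \forall u \Box (p \to P(u))$ from Theorem \ref{SmT}, which is modalized in $p$ and contains only the predicate symbol $P$. Let $s$ and $s'$ be two fresh $0$-ary predicate symbols (propositional atoms) not occurring in $A$, and set $\Phi(x) :\equiv \boxdot (x \leftrightarrow A(x))$ for $x \in \{ s, s' \}$, so that $\Phi(s)$ uses only $s$ and $P$. The formula $\Phi(s)$ is the fixed-point equation for $s$; a fixed-point of $A$ is precisely an $\mathcal{L}$-sentence $C$ over $P$ with $\NQGL \prove C \leftrightarrow A(C)$, and Theorem \ref{T1} tells us that no such $C$ exists, since $\NQGL = \mathbf{MQ}(\FH) \subseteq \mathbf{MQ}(\FIFD)$ and $A$ is exactly the witness used in the proof of Theorem \ref{T1}.

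First I would establish implicit definability, i.e.\ uniqueness of fixed-points:
\[
\NQGL \prove \Phi(s) \land \Phi(s') \to (s \leftrightarrow s').
\]
Since $A(p)$ is modalized in $p$, the substitution lemma (Lemma \ref{L16}) gives $\QKf \prove \Box(s \leftrightarrow s') \to (A(s) \leftrightarrow A(s'))$, and the usual \Lob-style argument available in $\QGL \subseteq \NQGL$ (apply necessitation, axiom \textbf{4}, and \textbf{\Lob} to $X :\equiv s \leftrightarrow s'$) upgrades this to $\Box(s \leftrightarrow A(s)) \land \Box(s' \leftrightarrow A(s')) \to \Box(s \leftrightarrow s')$; combining $\Box(s \leftrightarrow s')$ with the non-modal conjuncts of $\Phi(s), \Phi(s')$ through the substitution lemma once more yields the displayed implicit definability. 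Now assume toward a contradiction that $\NQGL$ has the Craig interpolation property. Rewriting implicit definability as $\NQGL \prove (\Phi(s) \land s) \to (\Phi(s') \to s')$ and noting that the only predicate symbol common to antecedent and consequent is $P$, interpolation produces an $\mathcal{L}$-sentence $C$ over $P$ with $\NQGL \prove (\Phi(s) \land s) \to C$ and $\NQGL \prove C \to (\Phi(s') \to s')$. Renaming $s'$ to $s$ and combining the two implications yields the explicit definition
\[
\NQGL \prove \Phi(s) \to (s \leftrightarrow C).
\]

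The crucial remaining step is to promote this conditional definition of $C$ into a genuine fixed-point, and here I would use the local fixed-points supplied by Theorem \ref{T2} together with the infinitary rule \textbf{BL} of $\NQGL$. Fix $n$ and let $A_n$ be the $\mathcal{L}$-sentence over $P$ with $\QK \prove \Box^{n+1} \bot \to (A_n \leftrightarrow A(A_n))$ given by Theorem \ref{T2}. Since $\QK \prove \Box^{n+1} \bot \to \Box^{n+2} \bot$, this strengthens to $\QK \prove \Box^{n+1} \bot \to \Phi(A_n)$. As $s$ is a fresh $0$-ary predicate symbol, substituting the sentence $A_n$ for $s$ in the explicit definition is admissible, giving $\NQGL \prove \Phi(A_n) \to (A_n \leftrightarrow C)$, and hence $\NQGL \prove \Box^{n+1} \bot \to (A_n \leftrightarrow C)$. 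Feeding $A_n \leftrightarrow C$ back through the substitution lemma (Lemma \ref{L16}) to obtain $A(A_n) \leftrightarrow A(C)$, and chaining $C \leftrightarrow A_n \leftrightarrow A(A_n) \leftrightarrow A(C)$ under $\Box^{n+1}\bot$, I get $\NQGL \prove \Box^{n+1} \bot \to (C \leftrightarrow A(C))$ for every $n$. The rule \textbf{BL} then delivers $\NQGL \prove C \leftrightarrow A(C)$, so $C$ is a fixed-point of $A$ over $P$, contradicting Theorem \ref{T1} via $\NQGL \subseteq \mathbf{MQ}(\FIFD)$. This contradiction refutes the assumption and proves Theorem \ref{T3}.

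I expect the transfer step of the third paragraph to be the main obstacle. Beth's theorem only yields the conditional $\Phi(s) \to (s \leftrightarrow C)$, which by itself does not assert that $C$ solves the fixed-point equation, and indeed $A$ has no global fixed-point. What rescues the argument is that $A$ \emph{does} have the bounded-height fixed-points $A_n$ of Theorem \ref{T2}; substituting these into the conditional definition and then gluing the bounded-height conclusions by the rule \textbf{BL} is precisely what converts ``unique-if-it-exists'' into ``exists''. Care is needed in the bookkeeping of the $\boxdot$ versus $\Box$ occurrences in $\Phi$ while running the \Lob\ argument, and in verifying that substitution of a sentence for the fresh $0$-ary predicate symbol is admissible in $\NQGL$, including through the premises of the rule \textbf{BL}.
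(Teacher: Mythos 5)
Your proposal is correct in outline and pursues the same high-level strategy as the paper---apply interpolation to the provable uniqueness of fixed points of \Smor's formula $A(p) \equiv \forall u \Box (p \to P(u))$ so as to manufacture an explicit fixed point of $A$ in $\NQGL$, contradicting non-existence---but both key technical steps are executed differently. For uniqueness you run the classical \Lob-style argument directly in $\QGL$ (substitution lemma plus \textbf{\Lob} applied to $s \leftrightarrow s'$), which is simpler than the paper's Lemma \ref{L14}, where uniqueness is obtained by comparing each solution to the bounded-height fixed points $A_n$ of Theorem \ref{T2} and gluing with \textbf{BL}. For the transfer from the conditional explicit definition $\Phi(s) \to (s \leftrightarrow C)$ to an actual fixed point, you substitute the $A_n$ for $s$ and apply \textbf{BL} to the family $\Box^{n+1}\bot \to (C \leftrightarrow A(C))$; the paper instead substitutes $A(G)$ for the implicitly defined predicate and uses the \Lob\ rule to show $A(A(G)) \leftrightarrow A(G)$ is outright provable, whence $A(G) \leftrightarrow G$. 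Your route makes the roles of Theorem \ref{T2} and the infinitary rule transparent; the paper's is a self-contained self-referential trick. Working with $0$-ary atoms rather than the paper's unary predicates $Q(v_0), R(v_1)$ is an inessential difference.

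The one ingredient you flag but do not discharge is exactly the non-routine one: admissibility in $\NQGL$ of substituting a formula for the fresh atom (namely $A_n$ for $s$, and $s$ for $s'$). Since $\NQGL$ is generated by the infinitary rule \textbf{BL}, closure under substitution is not immediate from the schematic presentation; the paper isolates this as Lemma \ref{L13} and proves it semantically from Kripke completeness (Theorem \ref{Tana}) by reinterpreting the predicate pointwise. The same argument (indeed an easier, parameter-free version) covers your two substitutions, so the gap is fillable, but the proof is incomplete without some such lemma. Finally, for the terminal contradiction you should invoke the content of the proof of Theorem \ref{T1} (or Theorem \ref{SmT} together with the fact that $\mathcal{M}_S$ lies over a frame in $\BL$ and Theorem \ref{Tana}), not just its statement: what is needed is that no $\mathcal{L}$-sentence $C$ over $P$ has $C \leftrightarrow A(C)$ valid on all the models $\mathcal{M}_k$, which is precisely what that proof establishes.
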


Before proving Theorem \ref{T3}, we prepare several lemmas. 

\begin{lem}\label{L13}
Suppose that $A(p)$ is an $\mathcal{L}'$-formula not containing the unary predicate $P$,
and not containing occurrences of $u$ and $v$ as bound variables.
If $\NQGL \prove \forall u A\left(P(u)\right)$, then for any $\mathcal{L}'$-formula $B(v)$,
$\NQGL \prove \forall v A\left(B(v)\right)$.
\end{lem}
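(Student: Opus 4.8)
The plan is to establish that $\NQGL$-provability is invariant under replacing the fresh unary predicate $P$ by an arbitrary formula, and then to extract the conclusion from the hypothesis by one such replacement. For any $\mathcal{L}$- or $\mathcal{L}'$-formula $C$, write $C^{\ast}$ for the formula obtained by replacing every atomic subformula of the form $P(t)$ with $B[v:=t]$, i.e. $B(v)$ with the term $t$ substituted for $v$. The heart of the argument is the claim that $\NQGL \prove C$ implies $\NQGL \prove C^{\ast}$ for every $C$. Granting this and applying it to $C \equiv \forall u\, A(P(u))$: since $A(p)$ contains no $P$, the only occurrences of $P$ in $A(P(u))$ are those put in place of $p$, so $C^{\ast} \equiv \forall u\, A\bigl(B[v:=u]\bigr)$. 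Assuming (after renaming the quantified variable if necessary) that $u$ does not occur free in $B$, an $\alpha$-conversion of the bound $u$ to $v$ — admissible because $A$ has no bound occurrence of $v$ — turns this into $\forall v\, A(B(v))$, which is what we want.

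The claim is proved by induction on the $\NQGL$-proof of $C$, a well-founded (though possibly infinitely branching) tree. The translation $(\cdot)^{\ast}$ commutes with $\neg$, $\to$, $\forall$ and $\Box$, so instances of $\mathbf{Ax2}$ and $\mathbf{4}$ are carried to instances of the same schemata, and the finitary rules are preserved verbatim: from $D^{\ast}$ and $D^{\ast}\to C^{\ast}$ one obtains $C^{\ast}$ by $\mathbf{R1}$, and $\Box D^{\ast}$ from $D^{\ast}$ by $\mathbf{R2}$. The infinitary rule $\mathbf{BL}$ causes no trouble: as $\bot$ contains no $P$ we have $(\Box^{n+1}\bot)^{\ast}\equiv\Box^{n+1}\bot$, so the images of its premises are exactly the formulas $\Box^{n+1}\bot\to C^{\ast}$, which are provable for all $n$ by the induction hypothesis, and a single application of $\mathbf{BL}$ delivers $C^{\ast}$.

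The main obstacle is the axiom group $\mathbf{Ax1}$, where $(\cdot)^{\ast}$ must be made to commute with term substitution while avoiding variable capture. For a quantifier axiom of the shape $\forall x\, D \to D[x:=t]$, its image is again such an axiom only if $\bigl(D[x:=t]\bigr)^{\ast} \equiv D^{\ast}[x:=t]$ and the side condition ``$t$ is free for $x$'' is preserved; both can break if a free variable of $B$ is captured by a quantifier of $D$, or if a bound variable of $B$ collides with a variable of $t$. I would remove this difficulty by renaming bound variables throughout the proof and within $B$ before applying $(\cdot)^{\ast}$, so that every bound variable occurring in the proof is distinct from the free variables of $B$ and from $v$, and every bound variable of $B$ is distinct from all variables occurring in the proof. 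This is possible using countably many fresh variables and does not affect provability; under this convention $(\cdot)^{\ast}$ is capture-free and commutes with term substitution, so each predicate-logic axiom maps to a predicate-logic axiom and the induction goes through.
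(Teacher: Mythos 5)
Your proof is essentially correct, but it takes a genuinely different route from the paper. The paper argues semantically, via the contrapositive: if $\NQGL \nvdash \forall v A(B(v))$, then by Tanaka's completeness theorem ($\NQGL = \mathbf{MQ}(\FH)$, Theorem~\ref{Tana}) there is a model $\mathcal{M}$ over a frame in $\FH$ with $\mathcal{M},w \not\models A(B(c))$ for some $c$; one then reinterprets $P$ so that $P(a)$ holds at a world exactly when $B(c)$ does, proves by induction on $C(p)$ that $\mathcal{M},x\models C(B(c)) \iff \mathcal{M}^{\ast},x\models C(P(a))$, and concludes $\mathcal{M}^{\ast},w\not\models \forall u A(P(u))$, whence $\NQGL \nvdash \forall u A(P(u))$ by soundness. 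You instead prove a syntactic substitution theorem for the proof system by transfinite induction on the well-founded, infinitely branching derivation, checking each axiom schema and each rule, including the infinitary rule $\mathbf{BL}$ (where your observation that $(\Box^{n+1}\bot)^{\ast}\equiv\Box^{n+1}\bot$ is exactly the point that makes the rule commute with the translation). The trade-offs: the paper's argument is short and entirely avoids variable-capture bookkeeping, but it leans on the nontrivial completeness theorem; your argument is self-contained, needs only closure of the axiom schemata and rules under predicate substitution, and therefore generalizes immediately to $\QK$, $\QKf$ and $\QGL$. The price is the $\mathbf{Ax1}$ case, which is the one place where real care is needed: you correctly identify that $(\cdot)^{\ast}$ must commute with term substitution and preserve the ``free for'' side conditions, and your global renaming convention (bound variables of the derivation disjoint from the free variables of $B$ and from $v$; bound variables of $B$ disjoint from all variables of the derivation) does handle this, though a fully written-out version should also record that $\alpha$-renaming preserves $\NQGL$-provability (which needs the congruence of provable equivalence under $\Box$, available from $\mathbf{R2}$ and $\mathbf{Ax2}$), and that the final $\alpha$-conversion of $\forall u\, A(B[v:=u])$ into $\forall v\, A(B(v))$ tacitly assumes $v$ is not free in $A$ --- an assumption the paper's statement also makes implicitly and which holds in its applications.
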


\begin{proof}
Suppose that for some $B(v)$, $\NQGL \nvdash \forall v A \left( B(v) \right)$. By Theorem \ref{Tana}, there exists a Kripke model $\mathcal{M} = \langle \mathcal{F}, \Vdash \rangle = \langle W, \prec, {\{ D_w \}}_{w \in W}, \Vdash \rangle$ such that $\mathcal{F} \in \FH$, and for some $w \in W$ and $c \in D_w$, $\mathcal{M}, w \not\models A \left( B(c) \right)$. We may assume $w$ is the root of $\mathcal{F}$. Then for every $x \in W$, $c \in D_x$. We define an interpretation $\Vdash^\ast$ of $\mathcal{F}$ as follows:
	\begin{itemize}
	\item For any predicate symbol $Q$ other than $P$, 
	${\Vdash^\ast \langle w, Q \rangle} = {\Vdash \langle w, Q \rangle}$ for every $w \in W$;
	\item For every $x \in W$ and $a \in D_x$,
	$x \Vdash^\ast P(a) :\Leftrightarrow x \Vdash B(c)$.
	\end{itemize} 
Let $\mathcal{M}^\ast := \langle \mathcal{F}, \Vdash^\ast \rangle$. We claim that for any $\mathcal{L}'$-formula $C(p)$, $x \in W$ and $a \in D_x$, $\mathcal{M}, x \models C \left( B(c) \right) \iff \mathcal{M}^\ast, x \models C \left( P(a) \right)$. We prove the claim by induction on the construction of $C(p)$.
	\begin{itemize}
	\item If $C(p)$ contains no occurrences of $p$, then the claim trivially holds.
	\item Assume $C(p) \equiv p$. Then $ C \left( B(c) \right) \equiv B(c)$ and $C \left( P(a) \right) \equiv P(a)$.
	By the definition of $\Vdash^\ast$, we have
	$\mathcal{M}, x \models C \left( B(c) \right) \iff \mathcal{M}^\ast, x \models C \left( P(a) \right)$.
	\item The cases $C(p) \equiv \neg D(p)$ and $C(p) \equiv D(p) \to E(p)$ are clear by the induction hypothesis.
	\item Assume $C(p) \equiv \forall v D(p)$. Then
		\begin{align*}
		\mathcal{M}, x \models \forall v D \left( B(c) \right) &\iff \mathcal{M}, x \models D \left( B(c) \right)[v / b]
		\text{ for all } b \in D_x, \\
		&\iff  \mathcal{M}^\ast, x \models D \left( P(a) \right)[v / b]
		\text{ for all } b \in D_x,  \tag{I.H.} \\
		&\iff \mathcal{M}^\ast, x \models \forall v D \left( P(a) \right).
		\end{align*}
	\item Assume $C(p) \equiv \Box D(p)$. Then
		\begin{align*}
		\mathcal{M}, x \models \Box D\left(B(c)\right) &\iff \mathcal{M}, y \models D\left(B(c)\right) \text{ for any } y \succ x, \\
		&\iff \mathcal{M}^\ast, y \models D\left( P(a) \right) \text{ for any } y \succ x,  \tag{I.H.} \\
		&\iff \mathcal{M}^\ast, x \models \Box D \left( P(a) \right).
		\end{align*}
	\end{itemize}
The proof of the claim is completed. From $\mathcal{M}, w \not\models A \left( B(c) \right)$ and by the claim,
$\mathcal{M}^\ast, w \not\models A \left( P(a) \right)$, and hence $\mathcal{M}^\ast, w \not\models \forall u A \left( P(u) \right)$.
By Theorem \ref{Tana}, $\NQGL \nvdash \forall u A \left( P(u) \right)$.
\end{proof}

We prove the following uniqueness lemma of fixed-points in $\NQGL$.

\begin{lem}[Uniqueness of fixed-points in $\NQGL$] \label{L14}
Let $A(p)$ be any $\mathcal{L}'$-formula which is modalized in $p$. 
Let $F_0$ and $F_1$ be any $\mathcal{L}$-formulas which contain no bounded variables occurring freely in $A(p)$. 
Then 
	\[
	\NQGL \prove \boxdot \left( A \left(F_0 \right) \leftrightarrow F_0 \right)
	\land \boxdot \left( A \left(F_1\right) \leftrightarrow F_1 \right) \to \left( F_0 \leftrightarrow F_1 \right).
	\]
\end{lem}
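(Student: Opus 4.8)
The plan is to reproduce, in the predicate setting, the classical uniqueness argument for fixed-points of $\GL$, whose only genuinely modal ingredient is the \Lob\ axiom. In fact the whole argument can be carried out already in $\QGL$; since $\QGL \subseteq \NQGL$, the statement for $\NQGL$ follows at once, and the infinitary rule \textbf{BL} is not needed. First I would invoke the substitution lemma (Lemma \ref{L16}): because $A(p)$ is modalized in $p$ and $F_0, F_1$ contain no free variables bound in $A(p)$, its modalized clause gives
\[
\QKf \prove \Box(F_0 \leftrightarrow F_1) \to \left( A(F_0) \leftrightarrow A(F_1) \right).
\]
Substituting the two fixed-point equivalences $A(F_i) \leftrightarrow F_i$ (the non-boxed conjuncts of the two $\boxdot$'s) and rearranging yields, in $\QKf$,
\[
(A(F_0) \leftrightarrow F_0) \land (A(F_1) \leftrightarrow F_1) \to \left( \Box(F_0 \leftrightarrow F_1) \to (F_0 \leftrightarrow F_1) \right).
\]

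Next I would necessitate this implication, then distribute the box across the outer implication and across the conjunction (using Ax2 and $\QK \prove \Box(X \land Y) \leftrightarrow \Box X \land \Box Y$), obtaining
\[
\QKf \prove \Box(A(F_0) \leftrightarrow F_0) \land \Box(A(F_1) \leftrightarrow F_1) \to \Box\left( \Box(F_0 \leftrightarrow F_1) \to (F_0 \leftrightarrow F_1) \right).
\]
Applying the \Lob\ axiom (a theorem of $\QGL$, hence of $\NQGL$) to the sentence $F_0 \leftrightarrow F_1$ and chaining, I would then conclude
\[
\NQGL \prove \Box(A(F_0) \leftrightarrow F_0) \land \Box(A(F_1) \leftrightarrow F_1) \to \Box(F_0 \leftrightarrow F_1).
\]

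Finally, combining $\Box(F_0 \leftrightarrow F_1)$ with the first displayed implication and the two non-boxed equivalences $A(F_i) \leftrightarrow F_i$ gives $F_0 \leftrightarrow F_1$; since $\boxdot X \equiv \Box X \land X$, this is precisely the claimed implication. I do not anticipate a serious difficulty: once the atoms $F_0, F_1, A(F_0), A(F_1)$ are fixed, the reasoning is purely propositional modal logic, and the quantifier structure intervenes only through the variable side-condition of Lemma \ref{L16}. The one place demanding care is the necessitation-and-distribution step feeding into \Lob\ --- making sure the box genuinely distributes and that the hypotheses used there are exactly the $\Box$-parts of the given $\boxdot$'s --- but this is routine bookkeeping.
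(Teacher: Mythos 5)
Your proof is correct, and it takes a genuinely different route from the paper's. You run the classical de Jongh--Sambin/Bernardi uniqueness argument directly: Lemma \ref{L16} gives $\Box(F_0 \leftrightarrow F_1) \to (A(F_0) \leftrightarrow A(F_1))$, the non-boxed fixed-point hypotheses turn this into $\Box(F_0\leftrightarrow F_1)\to(F_0\leftrightarrow F_1)$, necessitation and the \Lob\ axiom then yield $\Box(F_0\leftrightarrow F_1)$ from the boxed hypotheses, and one final detachment closes the loop. Every ingredient (substitution lemma, necessitation, K-distribution, \Lob) is available in $\QGL$, and the variable side-condition of Lemma \ref{L16} is exactly what the hypotheses on $F_0,F_1$ supply, so the argument is sound and in fact establishes the stronger statement that uniqueness is already provable in $\QGL$, with $\NQGL$ inherited by inclusion. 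The paper instead proves, for each $n$, that $\Box^{n+1}\bot$ together with $\boxdot(A(F)\leftrightarrow F)$ forces $F\leftrightarrow A_n$, where $A_n$ is the explicit fixed point constructed in Section \ref{QK}; comparing both $F_0$ and $F_1$ to $A_n$ and then discharging the $\Box^{n+1}\bot$ hypothesis via the infinitary rule $\mathbf{BL}$ gives the lemma. What the paper's detour buys is a canonical-form statement of independent interest (under $\Box^{n+1}\bot$, every fixed point is provably equivalent to $A_n$), and it showcases the machinery of Theorem \ref{T2}; what your route buys is economy --- no appeal to Theorem \ref{T2}, to the formulas $A_n$, or to the rule $\mathbf{BL}$ --- and a sharper conclusion about $\QGL$ itself.
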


\begin{proof}
We claim that, for any $n \in \mathbb{N}$, $\mathcal{L}'$-formula $A(p)$ which is modalized in $p$, and $\mathcal{L}$-formula $F$ which contains no bounded variables occurring freely in $A(p)$,
	\begin{equation*}
	\QGL \prove \Box^{n+1} \bot \to \left( \boxdot \left( A(F) \leftrightarrow F \right) \to (F \leftrightarrow A_n) \right), 
	\end{equation*}
where $A_n$ is the $\mathcal{L}$-formula defined in Section \ref{QK}. 
By Lemma \ref{L16},
$\QKf \prove \Box \left( F \leftrightarrow A_n \right) \to \left( A(F) \leftrightarrow A(A_n) \right)$.
By Theorem \ref{T2}, $\QK \prove \Box^{n+1} \bot \to \left(A(A_n) \leftrightarrow A_n \right)$. Thus
$\QKf \prove \Box^{n+1} \bot \to \left( \Box \left( F \leftrightarrow A_n \right) \to \left( A(F) \leftrightarrow A_n \right) \right)$.
Then 
	\begin{align}
	\QKf &\prove \Box^{n+1} \bot \land \left( A(F) \leftrightarrow F \right) 
	\to \left( \Box \left( F \leftrightarrow A_n \right) \to \left( F \leftrightarrow A_n \right) \right), \label{E4}\\
	\QKf &\prove \Box^{n+2} \bot \land \Box \left( A(F) \leftrightarrow F \right)
	\to \Box \left( \Box \left( F \leftrightarrow A_n \right) \to \left( F \leftrightarrow A_n \right) \right), \nonumber\\
	\QGL &\prove \Box^{n+2} \bot \land \Box \left( A(F) \leftrightarrow F \right)
	\to \Box \left( F \leftrightarrow A_n \right). & \tag{ by \textbf{\Lob}} \nonumber
	\end{align}
Since $\QKf \prove \Box^{n+1}\bot \to \Box^{n+2}\bot$, we obtain
	\begin{equation*}
	\QGL \prove \Box^{n+1} \bot \land \Box \left( A(F) \leftrightarrow F \right) \to \Box \left( F \leftrightarrow A_n \right).
	\end{equation*}
From this and (\ref{E4}),
$\QGL \prove \Box^{n+1} \bot \to \left( \boxdot \left( A(F) \leftrightarrow F \right) \to (F \leftrightarrow A_n) \right)$.
The proof of the claim is completed. 

Let $A(p)$, $F_0$ and $F_1$ be formulas as in the statement of Lemma. 
By the claim, for any $n \in \mathbb{N}$,
	\begin{align*}
	\QGL &\prove \Box^{n+1}\bot \to \left( \boxdot \left( A \left(F_0 \right) \leftrightarrow F_0 \right)
	\to \left(F_0 \leftrightarrow A_n \right) \right), \text{ and }\\
	\QGL &\prove \Box^{n+1}\bot \to \left( \boxdot \left( A \left(F_1 \right) \leftrightarrow F_1 \right)
	\to \left(F_1 \leftrightarrow A_n \right) \right).
	\end{align*}
Therefore
\begin{align*}
	\QGL \prove \Box^{n+1}\bot \to \left(\boxdot \left( A \left(F_0 \right) \leftrightarrow F_0 \right) \land \boxdot \left( A \left(F_1 \right) \leftrightarrow F_1 \right) \to \left(F_0 \leftrightarrow F_1 \right) \right). 
\end{align*}
Applying the rule $\mathbf{BL}$ of $\NQGL$, we conclude
	\begin{align*}
	\NQGL \prove \boxdot \left( A \left(F_0 \right) \leftrightarrow F_0 \right) \land \boxdot \left( A \left(F_1 \right) \leftrightarrow F_1 \right) \to \left(F_0 \leftrightarrow F_1 \right). 
	\end{align*}
\end{proof}

\begin{proof}[Proof of Theorem \ref{T3}]
Let $A(p) \equiv \forall u \Box \left( p \to P(u) \right)$. By Lemma \ref{L14}, for any unary predicate symbols $Q$ and $R$ other than $P$, and any variables $v_0$ and $v_1$,
	\begin{align*}
	\NQGL & \prove \boxdot \left( A \left( Q(v_0) \right) \leftrightarrow Q(v_0) \right)
	\land \boxdot \left( A \left( R(v_1) \right) \leftrightarrow R(v_1) \right)
	\to \left( Q(v_0) \leftrightarrow R(v_1)\right), \\
	\NQGL & \prove \forall v_0 \forall v_1 \left( 
	\boxdot \left( A \left( Q(v_0) \right) \leftrightarrow Q(v_0) \right)
	\land \boxdot \left( A \left( R(v_1) \right) \leftrightarrow R(v_1) \right) \right. \\
	& \hspace{76mm} \left. \to \left( Q(v_0) \leftrightarrow R(v_1) \right)
	\right),
	\end{align*}
and hence
	\begin{align}
	\NQGL &\prove \exists v_0 \left( 
	\boxdot \left( A \left( Q(v_0) \right) \leftrightarrow Q(v_0) \right)
	\land Q(v_0) \right) \nonumber \\
	&\hspace{27mm}\to \forall v_1 \left(
	\boxdot \left( A \left( R(v_1) \right) \leftrightarrow R(v_1) \right) 
	\to R(v_1) \right). \label{E5}
	\end{align}

We show that the implication (\ref{E5}) has no Craig interpolants. Suppose, for the contradiction, that (\ref{E5}) has a Craig interpolant $G$, then $G$ is an $\mathcal{L}$-sentence containing only the predicate symbol $P$ such that
	\begin{align*}
	\NQGL & \prove \exists v_0 \left( \boxdot \left( A\left( Q(v_0) \right) \leftrightarrow Q(v_0) \right) \land Q(v_0) \right)
	\to G, \text{ and}\\
	\NQGL & \prove G \to
	\forall v_1 \left( \boxdot \left( A \left( R(v_1) \right) \leftrightarrow R(v_1) \right) \to R(v_1) \right).
	\end{align*}
Hence
	\begin{align}
	\NQGL & \prove \forall v_0 \left( \boxdot \left( A\left( Q(v_0) \right) \leftrightarrow Q(v_0) \right) 
	\to \left( Q(v_0) \to G\right) \right), \text{ and} \label{E6}\\
	\NQGL & \prove \forall v_1 \left( \boxdot \left( A \left( R(v_1) \right) \leftrightarrow R(v_1) \right)
	\to \left( G \to R(v_1)\right) \right). \label{E7}
	\end{align}
We may assume $G$ does not contain $v_0$ and $v_1$. By Lemma \ref{L13}, substituting $Q(v_0)$ for $R(v_1)$ in (\ref{E7}), we have 
$\NQGL \prove \forall v_0 \left( \boxdot \left( A \left( Q(v_0) \right) \leftrightarrow Q(v_0) \right)
\to \left( G \to Q(v_0)\right) \right)$. From this and (\ref{E6}),
	\begin{equation*}
	\NQGL \prove \forall v_0 \left( \boxdot \left( A \left( Q(v_0) \right) \leftrightarrow Q(v_0) \right)
	\to \left( Q(v_0) \leftrightarrow G \right) \right). 
	\end{equation*}
By Lemma \ref{L13}, substituting $A(G)$ for $Q(v_0)$, we have
	\begin{equation}
	\NQGL \prove \boxdot \left( A \left( A(G) \right) \leftrightarrow A(G) \right) \to \left( A(G) \leftrightarrow G \right). \label{E8}
	\end{equation}
By the derivation of $\QKf$, we get $\NQGL \prove \Box \left( A \left( A(G) \right) \leftrightarrow A(G) \right) 
\to \Box \left( A(G) \leftrightarrow G \right)$. By Lemma \ref{L16}, $\QKf \prove \Box (A(G) \leftrightarrow G) \to (A(A(G)) \leftrightarrow A(G))$. Thus
	\begin{equation*}
	\NQGL \prove \Box \left( A \left( A(G) \right) \leftrightarrow A(G) \right) 
	\to \left( A\left( A(G) \right) \leftrightarrow A(G) \right).
	\end{equation*}
Since the \Lob\ rule is admissible in $\NQGL$, we obtain $\NQGL \prove A \left( A(G) \right) \leftrightarrow A(G)$, and hence
$\NQGL \prove \boxdot \left( A \left( A(G) \right) \leftrightarrow A(G) \right)$. From this and (\ref{E8}),
	\begin{equation*}
	\NQGL\prove A(G) \leftrightarrow G.
	\end{equation*}
This means that $G$ would be a fixed-point of $A(p)$ in $\NQGL$. However, Corollary 2.10.2 says that $A(p)$ has no fixed-points in $\NQGL$, contradiction.
\end{proof}

\section{Formulas having a fixed-point in $\QGL$}

In this section, we investigate a sufficient condition for formulas to have a fixed-points in $\QGL$. 
We introduce the notion of $\Sigma$-formulas, and then we prove that if $A(p)$ is a Boolean combination of $\Sigma$ formulas and formulas without $p$, then $A(p)$ has a fixed-point in $\QGL$. 

Let $\mathcal{L}''$ be the language $\mathcal{L}$ together with Boolean connectives $\lor, \land$, the existential quantifier $\exists$, and \textit{countably infinite} propositional variables $p, q, \ldots$. 
We assume that an $\mathcal{L}''$-formula $A(p)$ may contain propositional variables other than $p$. 
Let $\QGL''$ be the natural extension of the system $\QGL$ to the language $\mathcal{L}''$. 
It is easy to show that if an $\mathcal{L}''$-formula $A$ is proved in $\QGL''$, then the $\mathcal{L}$-formula obtained by substituting $\top$ for all propositional variables appearing in $A$ is proved in $\QGL$. 
This shows that the system $\QGL''$ is a conservative extension of $\QGL$. 
Thus in this section, we write simply $\QGL$ instead of $\QGL''$. 
Also it is easy to see that the substitution lemma (Lemma \ref{L16}) is extended to the language $\mathcal{L}''$.

\begin{defi}[$\Sigma$-formulas]\label{D2}
\textit{$\Sigma$-formulas} are defined inductively as follows: 
	\begin{itemize}
	\item An $\mathcal{L}''$-formula of the form $\Box B$ is a $\Sigma$-formula;
	\item If $B$ and $C$ are $\Sigma$-formulas, then $B \lor C$, $B \land C$ and $\exists u B$ are $\Sigma$-formulas.
	\end{itemize}
\end{defi}

If $A(p)$ is a $\Sigma$-formula, then $A(p)$ contains no occurrences of $p$ of depth $0$, and for any $\mathcal{L}''$-formula $B$, the formula $A(B)$ is also a $\Sigma$-formula.

\begin{thm}\label{T4}
If $A(p)$ is a Boolean combination of $\Sigma$-formulas and $\mathcal{L}''$-formulas containing no occurrences of $p$, then there exist an $\mathcal{L}''$-formula $F$ such that $F$ contains only predicate symbols, propositional variables, free variables occurring in $A(p)$, not containing $p$, and such that $\QGL \prove F \leftrightarrow A(F)$.
\end{thm}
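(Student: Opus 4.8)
The plan is to isolate the one feature of $\Sigma$-formulas that \Smor's formula $\forall u\, \Box(p\to P(u))$ (Theorem \ref{SmT}) lacks, namely a modal analogue of $\Sigma_1$-completeness, and to turn it, via the substitution lemma (Lemma \ref{L16}) and the formal \Lob\ rule, into an explicit fixed-point. First observe that every building block of $A(p)$ either contains $p$ only inside a $\Box$ (the $\Sigma$-formulas) or contains no $p$ at all, so every occurrence of $p$ in $A(p)$ has depth $\geq 1$; hence $A(p)$ is modalized and Lemma \ref{L16} applies to it. The crucial lemma I would establish first is \emph{$\Sigma$-completeness}: for every $\Sigma$-formula $S$, $\QGL \prove S \to \Box S$. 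This is proved by induction on the generation of $S$: the base case $\Box B \to \Box\Box B$ is the axiom $\mathbf{4}$; the cases $B\lor C$ and $B\land C$ are immediate; and for $\exists u\, B$ one combines the induction hypothesis with the $\QK$-theorem $\exists u\,\Box B \to \Box\,\exists u\, B$ (necessitate $B\to\exists u\, B$ and generalise existentially). Dually, every negated $\Sigma$-formula satisfies $\QGL \prove \Diamond\neg S \to \neg S$. It is exactly this persistence that $\forall u\,\Box(p\to P(u))$ violates, and it is what the hypothesis of the theorem buys us.

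For a \emph{single} $\Sigma$-formula $S(p)$ I expect $F :\equiv S(\top)$ to already work, i.e.\ $\QGL \prove S(\top)\leftrightarrow S(S(\top))$. Since $\Box(S(\top)\leftrightarrow\top)$ is literally $\Box S(\top)$, Lemma \ref{L16} gives $\QKf \prove \Box S(\top) \to (S(S(\top))\leftrightarrow S(\top))$; feeding in $\Sigma$-completeness $S(\top)\to\Box S(\top)$ yields the direction $\QGL \prove S(\top)\to S(S(\top))$ outright. For the converse I would run the \Lob\ rule on $Z :\equiv S(S(\top))\to S(\top)$: assuming $\Box Z$ and $S(S(\top))$, $\Sigma$-completeness gives $\Box S(S(\top))$, hence $\Box S(\top)$ by normality through $\Box Z$, and then Lemma \ref{L16} again delivers $S(S(\top))\leftrightarrow S(\top)$, so $S(\top)$ follows. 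Thus $\QGL \prove \Box Z \to Z$, and the \Lob\ rule gives $Z$.

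The genuine difficulty is the Boolean combination, since a negation destroys $\Sigma$-completeness: if $A(p)$ mixes $\Sigma$-atoms $S_1(p),\dots,S_k(p)$ under $\neg,\land,\lor,\to$ (together with $p$-free formulas, which are untouched by the substitution), then $A$ itself need not satisfy $A\to\Box A$, and the one-step argument fails—already for $A(p)\equiv\neg\Box p$ the fixed-point is $\Diamond\top$, not $A(\top)$. My plan is to track the finitely many $\Sigma$-atoms \emph{with their polarities}: each positive atom is upward persistent ($S_i\to\Box S_i$) and each negative one is downward persistent ($\Diamond\neg S_i\to\neg S_i$). I would then build $F$ by iterating the substitution, $F_0:\equiv\top$ and $F_{m+1}:\equiv A(F_m)$ (equivalently through the depth-truncated approximants $A_n$ of Section \ref{QK}, for which Theorem \ref{T2} already provides $\QK\prove \Box^{n+1}\bot \to (A_n\leftrightarrow A(A_n))$), and prove that in $\QGL$ this sequence \emph{stabilizes}, $\QGL\prove F_{m+1}\leftrightarrow F_m$ for all large $m$. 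The underlying reason is that along any branch of a finite-height model the truth-values of the persistent atoms can only change monotonically, so after boundedly many steps the substitution ceases to matter; the syntactic counterpart is driven by Lemma \ref{L16} (one $\Box$ of agreement propagates equivalence) fed by the two persistence properties, and is closed off by the \Lob\ rule to strip away the guard $\Box^{n+1}\bot$. Taking $F$ to be the stable formula, it contains only predicate symbols, propositional variables and free variables of $A(p)$ and no $p$, and $\QGL\prove F\leftrightarrow A(F)$ follows; we may work throughout in the conservative extension $\QGL''$, which is harmless by the remark preceding the theorem.

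The hard part will be precisely this stabilization step for Boolean combinations: one cannot apply $\Sigma$-completeness to $A$ as a whole, so the argument must manage the positive and negative $\Sigma$-atoms separately and then reassemble them, and it must convert the height-bounded equivalences of Theorem \ref{T2} into a single \emph{unguarded} $\QGL$-theorem by means of the \Lob\ rule rather than the infinitary rule $\mathbf{BL}$. This is exactly the point at which the hypothesis is indispensable, and it is what separates the present class from Montagna's counterexample (Theorem \ref{QFPT}), where no such stabilization occurs.
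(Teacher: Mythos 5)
Your handling of a single $\Sigma$-formula is essentially the paper's: the persistence property $\QGL \prove S \to \Box S$ is the paper's Lemma \ref{L8} (``self-provers''), your \Lob-rule argument is packaged there as Lemma \ref{L10}, and $S(\top)$ is indeed the fixed point produced in Lemma \ref{L11}. The problem is the step you yourself flag as ``the hard part'': for a genuine Boolean combination you propose to iterate $F_{m+1} :\equiv A(F_m)$ and prove stabilization in $\QGL$, but you give no proof of stabilization, and neither of the mechanisms you gesture at supplies one. First, ``stripping away the guard $\Box^{n+1}\bot$'' from the equivalences of Theorem \ref{T2}, which hold for every $n$, is exactly the infinitary rule $\mathbf{BL}$ of $\NQGL$; $\QGL$ is strictly weaker and cannot perform this step, and the \Lob\ rule is not a substitute for it. Second, the \Lob-style closure that works in the single-$\Sigma$ case depends on the candidate and its image under $A$ being self-provers (that is what lets Lemma \ref{L10} convert $\Box Z \to Z$ into $Z$); for a Boolean combination, $A(F)$ is in general \emph{not} a self-prover -- as you note, negation destroys persistence -- so the closure step has no justification for the stabilized formula. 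Tracking $\Sigma$-atoms ``with their polarities'' is also not well defined when an atom occurs both positively and negatively, e.g.\ $A(p) \equiv \Box p \leftrightarrow \Box q$. So as written the argument for the Boolean case does not go through.

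The paper avoids stabilization entirely. It writes $A(p) \equiv B\left( S_0(p), \ldots, S_{n-1}(p), R_0, \ldots, R_{m-1} \right)$ with $B$ propositional and the $R_j$ free of $p$, forms $C_i(q_0, \ldots, q_{n-1}) :\equiv S_i\left( B(q_0, \ldots, q_{n-1}, R_0, \ldots, R_{m-1}) \right)$ -- these are again $\Sigma$-formulas, since substitution into a $\Sigma$-formula yields a $\Sigma$-formula -- finds \emph{simultaneous} fixed points $F_0, \ldots, F_{n-1}$ of the $C_i$ by the standard one-variable-at-a-time reduction (Lemma \ref{L12}, resting on Lemma \ref{L11}), and sets $F :\equiv B(F_0, \ldots, F_{n-1}, R_0, \ldots, R_{m-1})$. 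Then $\QGL \prove F_i \leftrightarrow S_i(F)$ and hence $\QGL \prove F \leftrightarrow A(F)$, with all the \Lob\ machinery confined to $\Sigma$-formulas, where self-proving is available. To salvage your route you would have to prove the unguarded stabilization $\QGL \prove F_{m+1} \leftrightarrow F_m$ directly; the simultaneous-fixed-point decomposition is the standard and much easier way around it.
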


Before proving the theorem, we give a definition and prove some lemmas.

	\begin{defi}[Self-provers]
	An $\mathcal{L}''$-formula $A$ is said to be a \textit{self-prover} if $\QGL \prove A \to \Box A$.
	\end{defi}

	\begin{lem}\label{L8}
	The Boolean constant $\top$ and $\mathcal{L}''$-formulas of the form $\Box A$ are self-provers.
	Moreover, the set of all self-provers is closed under $\land, \lor, \exists$. Consequently,
	every $\Sigma$-formula is a self-prover.
	\end{lem}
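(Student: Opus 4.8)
The plan is to prove Lemma \ref{L8} by induction on the construction of $\Sigma$-formulas, after first establishing the two base cases and the three closure properties (under $\land$, $\lor$, $\exists$) separately. The statement of the lemma is naturally modular: once the base cases and the closure properties are in hand, the final sentence (``every $\Sigma$-formula is a self-prover'') follows immediately by the inductive definition of $\Sigma$-formulas in Definition \ref{D2}, since that definition builds $\Sigma$-formulas precisely from boxes using $\land$, $\lor$, and $\exists$.

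For the base cases, I would first note that $\top$ is a self-prover because $\QGL \prove \Box \top$ (by necessitation applied to the theorem $\top$), so trivially $\QGL \prove \top \to \Box \top$. For a formula $\Box A$, the key fact is that $\QGL \prove \Box A \to \Box \Box A$; this is exactly the axiom schema $\mathbf{4}$, which is available since $\QKf \subseteq \QGL$ (recall $\QKf$ contains $\mathbf{4}$, and $\QGL$ is obtained from $\QK$ by adding \textbf{\Lob}, with $\QKf \subseteq \QGL$ noted in the excerpt). Hence $\Box A$ is a self-prover.

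For the closure properties, suppose $B$ and $C$ are self-provers, i.e.\ $\QGL \prove B \to \Box B$ and $\QGL \prove C \to \Box C$. For conjunction, from these two implications I would derive $\QGL \prove B \land C \to \Box B \land \Box C$, and then use the $\QK$-derivable fact $\QGL \prove \Box B \land \Box C \to \Box(B \land C)$ to conclude $\QGL \prove B \land C \to \Box(B \land C)$. For disjunction, I would similarly obtain $\QGL \prove B \lor C \to \Box B \lor \Box C$, and then use $\QGL \prove \Box B \lor \Box C \to \Box(B \lor C)$ (which holds since $\Box$ is monotone: $\QGL \prove \Box B \to \Box(B \lor C)$ and likewise for $C$). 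The existential case is the one requiring genuine care in the predicate setting. Assuming $\QGL \prove B \to \Box B$, by generalization $\QGL \prove \forall u (B \to \Box B)$, whence $\QGL \prove \exists u B \to \exists u \Box B$; the remaining step is $\QGL \prove \exists u \Box B \to \Box \exists u B$, i.e.\ the Barcan-type principle $\forall u \Box C \to \Box \forall u C$ in its dual form, which is valid in the Kripke semantics here precisely because the frame condition forces $D_w \subseteq D_v$ whenever $w \prec v$ (domains are increasing along $\prec$), so any witness available at $w$ remains available at each successor $v$.

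The main obstacle I expect is the existential (Barcan) step, since quantified modal logic does not in general validate $\exists u \Box B \to \Box \exists u B$; it must be justified syntactically from the axioms of $\QK$ together with the increasing-domain semantics (or, more directly, from the predicate-logic axioms in $\mathbf{Ax1}$ that govern the interaction of $\forall$ with substitution, combined with $\mathbf{Ax2}$). I would confirm that this principle is derivable in $\QK$—it follows from $\Box(\exists u B \to \exists u B)$ reformulated via the necessitation of $B \to \exists u B$ and distribution of $\Box$ over implication, giving $\Box B \to \Box \exists u B$ and hence $\exists u \Box B \to \Box \exists u B$. Once this is verified, all cases close uniformly and the induction completes.
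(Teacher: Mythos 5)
Your proposal is correct and follows essentially the same route as the paper: axiom $\mathbf{4}$ for $\Box A$, distribution of $\Box$ over $\land$, monotonicity of $\Box$ for $\lor$, and for $\exists$ the derivation of $\exists u \Box B \to \Box \exists u B$ from necessitation of $B \to \exists u B$ plus $\mathbf{Ax2}$ and existential elimination. Your worry that this last step is a Barcan-type principle is unfounded (and the label is slightly off --- it is not the dual of $\forall u \Box C \to \Box \forall u C$ but simply a $\QK$-theorem), yet you resolve it with exactly the syntactic derivation the paper uses, so there is no gap.
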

	
	\begin{proof}
	Since $\QGL \prove \top \to \Box \top$ and $\QGL \prove \Box A \to \Box \Box A$, 
	$\top$ and $\Box A$ are self-provers. Suppose that $A$ and $B$ are self-provers.
		\begin{itemize}
		\item Since $A$ and $B$ are self-provers, $\QGL \prove A \land B \to \Box A \land \Box B$. 
		On the other hand, $\QGL \prove \Box A \land \Box B \to \Box ( A \land B)$. Thus we have
		$\QGL \prove A \land B \to \Box ( A \land B )$, and hence $A \land B$ is a self-prover.
		\item Since $\QGL \prove A \to A \lor B$, we have $\QGL \prove \Box A \to \Box ( A \lor B)$.
		Since $A$ is a self-prover, we get $\QGL \prove A \to \Box (A \lor B)$. By a similar argument,
		$\QGL \prove B \to \Box ( A \lor B )$. Thus, $\QGL \prove A \lor B \to \Box ( A \lor B )$,
		and hence $A \lor B$ is a self-prover. 
		\item Since $\QGL \prove A \to \Box A$, we have $\QGL \prove \exists u A \to \exists u \Box A$.
		On the other hand, from $\QGL \prove A \to \exists u A$, we have $\QGL \prove \Box A \to \Box \exists u A$,
		and hence $\QGL \prove \exists u \Box A \to \Box \exists u A$.
		Thus, $\QGL \prove \exists u A \to \Box \exists u A$, and hence $\exists u A$ is a self-prover.
		\end{itemize}
	\end{proof}

	\begin{lem}\label{L10}
	Let $A$ and $B$ be self-provers. If $\QGL \prove \Box A \to (A \leftrightarrow B)$,
	then $\QGL \prove A \leftrightarrow B$.
	\end{lem}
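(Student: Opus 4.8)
The plan is to prove the two implications $A \to B$ and $B \to A$ separately in $\QGL$ and then conjoin them. The forward direction is immediate and uses only that $A$ is a self-prover: since $\QGL \prove A \to \Box A$ by assumption and $\QGL \prove \Box A \to (A \leftrightarrow B)$ by hypothesis, chaining these gives $\QGL \prove A \to (A \leftrightarrow B)$, whence $\QGL \prove A \to B$ by pure propositional reasoning. No appeal to the \Lob\ axiom is needed here.

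The converse $\QGL \prove B \to A$ is the substantive part, and this is where the \Lob\ axiom enters. Recall that the \Lob\ rule — from $\QGL \prove \Box C \to C$ infer $\QGL \prove C$ — is derivable in $\QGL$: necessitate $\Box C \to C$, apply the \Lob\ axiom to obtain $\Box C$, and then detach. I would apply this rule with $C \equiv (B \to A)$, so that it suffices to establish $\QGL \prove \Box(B \to A) \to (B \to A)$.

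To prove that implication, I would argue under the antecedent $\Box(B \to A)$ together with $B$, aiming to derive $A$. First, since $B$ is a self-prover, $\QGL \prove B \to \Box B$, so from $B$ we get $\Box B$. Next, distributing the box over the assumed $\Box(B \to A)$ gives $\QGL \prove \Box(B \to A) \to (\Box B \to \Box A)$, so we obtain $\Box A$. Finally, feeding $\Box A$ into the hypothesis $\Box A \to (A \leftrightarrow B)$ yields $A \leftrightarrow B$, and combining this with $B$ delivers $A$. This establishes $\QGL \prove \Box(B \to A) \to (B \to A)$, and the \Lob\ rule then gives $\QGL \prove B \to A$.

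Conjoining the two directions yields $\QGL \prove A \leftrightarrow B$. The main obstacle — indeed the only step requiring more than routine normal-modal bookkeeping — is the converse direction: one cannot obtain $B \to A$ by a direct chain as in the forward case, because the hypothesis only becomes usable once $\Box A$ is in hand. The device that breaks this circularity is the \Lob\ rule, which lets us assume $\Box(B \to A)$ for free while proving $B \to A$; the self-prover property of $B$ is then exactly what converts the assumed box back into the $\Box A$ needed to invoke the hypothesis. Throughout, $A$ and $B$ are treated as fixed units, so any free variables they contain play no role and the argument stays entirely at the level of the modal and propositional apparatus of $\QGL$.
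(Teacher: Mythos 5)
Your proof is correct and follows essentially the same route as the paper: the forward direction uses only that $A$ is a self-prover, and the converse combines the self-prover property of $B$ with \Lob's principle to get $\Box A$ under the assumption $B$ before invoking the hypothesis. The only cosmetic difference is that the paper instantiates the \Lob\ axiom at $A$ itself (deriving $\Box B \to \Box(\Box A \to A)$ and then applying $\Box(\Box A \to A) \to \Box A$), whereas you apply the derived \Lob\ rule to the formula $B \to A$; the two are interchangeable here.
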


	\begin{proof}
	Since $A$ is a self-prover, $\QGL \prove A \to \Box A$. From this and the assumption,
	$\QGL \prove A \to ( A \leftrightarrow B)$, and hence $\QGL \prove A \to B$.
	On the other hand, by the assumption, $\QGL \prove B \to (\Box A \to A)$, and hence
	$\QGL \prove \Box B \to \Box (\Box A \to A)$. Applying the axiom of $\QGL$, we get 
	$\QGL \prove \Box B \to \Box A$. Since $B$ is a self-prover, $\QGL \prove B \to \Box A$. From this
	and the assumption, $\QGL \prove B \to (A \leftrightarrow B)$, and hence $\QGL \prove B \to A$. Thus $\QGL \prove A \leftrightarrow B$.
	\end{proof}

We assume that, by replacing variables appropriately, for any formula $A$, the set of free variables of $A$ and the set of bound variables of $A$ are disjoint. ($\dag$)

	\begin{lem}\label{L11}
	For any $\Sigma$-formula $S(p)$, there is an $\mathcal{L}''$-formula $F$ containing
	only predicate symbols, propositional variables and free variables occurring in $S$,
	not containing $p$, and such that $\QGL \prove F \leftrightarrow S(F)$.
	\end{lem}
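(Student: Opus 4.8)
The plan is to exhibit an explicit fixed-point, namely $F :\equiv S(\top)$, the formula obtained from $S(p)$ by substituting $\top$ for every occurrence of $p$. This $F$ immediately satisfies the syntactic demands of the statement: it contains no $p$, and its predicate symbols, propositional variables and free variables are all among those of $S$. It then remains only to verify $\QGL \prove F \leftrightarrow S(F)$, that is, $\QGL \prove S(\top) \leftrightarrow S(S(\top))$.

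First I would record two structural observations about $\Sigma$-formulas. Since $S(p)$ is a $\Sigma$-formula, every occurrence of $p$ lies within the scope of some $\Box$, so $S(p)$ is modalized in $p$. Moreover, substituting any $\mathcal{L}''$-formula for $p$ again produces a $\Sigma$-formula, so both $S(\top)$ and $S(S(\top))$ are $\Sigma$-formulas and hence self-provers by Lemma \ref{L8}.

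The heart of the argument is a single application of the substitution lemma followed by Lemma \ref{L10}. Applying the modalized case of Lemma \ref{L16} (extended to $\mathcal{L}''$) to $S(p)$ with the two instances $\top$ and $S(\top)$ gives $\QGL \prove \Box(\top \leftrightarrow S(\top)) \to (S(\top) \leftrightarrow S(S(\top)))$; the free/bound variable side condition holds by the standing convention ($\dag$), since the free variables of $S(\top)$ are among those of $S$ and hence disjoint from its bound variables. Because $\top \leftrightarrow S(\top)$ is provably equivalent to $S(\top)$, this simplifies to $\QGL \prove \Box S(\top) \to (S(\top) \leftrightarrow S(S(\top)))$. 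This is precisely the hypothesis of Lemma \ref{L10} taken with $A \equiv S(\top)$ and $B \equiv S(S(\top))$, both of which are self-provers; Lemma \ref{L10} therefore yields $\QGL \prove S(\top) \leftrightarrow S(S(\top))$, which is the desired $\QGL \prove F \leftrightarrow S(F)$.

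I do not anticipate a genuine obstacle here: once the candidate $F = S(\top)$ is guessed, the whole proof collapses into the bookkeeping that $S(\top)$ and $S(S(\top))$ are self-provers together with the mechanical combination of Lemmas \ref{L16} and \ref{L10}. The one point deserving a line of care is the variable side condition of the substitution lemma, which is handled uniformly by the convention ($\dag$) guaranteeing that the free variables of the substituted formula $S(\top)$ are never captured by a quantifier of $S$.
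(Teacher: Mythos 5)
Your proof is correct, and it takes a genuinely shorter route than the paper's. The paper proves Lemma \ref{L11} by induction on the construction of $S(p)$: the candidate $S(\top)$ with the argument you give (substitution lemma plus Lemma \ref{L10} applied to the self-provers $S(\top)$ and $S(S(\top))$) appears there only as the base case $S(p) \equiv \Box A(p)$, and the cases $\land$, $\lor$, $\exists$ are then handled by combining fixed-points of the subformulas ($F \land G$, $F \lor G$, $\exists u F$), each requiring its own round of the substitution lemma and Lemma \ref{L10}. Your observation is that nothing in the base-case argument actually uses the shape $\Box A(p)$: all it needs is that $S(p)$ is modalized in $p$ (so the $\Box$-form of Lemma \ref{L16} applies), that substitution instances of $\Sigma$-formulas are again $\Sigma$-formulas, and that $\Sigma$-formulas are self-provers (Lemma \ref{L8}) --- and all three hold for arbitrary $\Sigma$-formulas. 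This collapses the induction into a single uniform argument and yields the explicit fixed-point $S(\top)$ for every $\Sigma$-formula. The paper's compositional construction buys a fixed-point assembled from fixed-points of the constituents, which is perhaps more illustrative of how the $\Sigma$-structure propagates, but since Lemma \ref{L12} and Theorem \ref{T4} use Lemma \ref{L11} only as an existence statement, nothing downstream depends on the particular syntactic form, and your version suffices. The one side condition you rightly flag --- that the free variables of $S(\top)$ are not captured when substituted into $S(p)$ --- is exactly what the convention ($\dag$) guarantees, so the argument is complete.
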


	\begin{proof}
	Induction on the construction of $S(p)$.
		\begin{itemize}
		\item Assume $S(p) \equiv \Box A(p)$.
		Then $\QGL \prove S(\top) \leftrightarrow \left(\top \leftrightarrow S(\top) \right)$.
		By the derivation of $\QGL$, we have
			\begin{equation}
			\QGL \prove \Box S(\top) \leftrightarrow \Box \left( \top \leftrightarrow S(\top) \right). \label{E1}
			\end{equation} 
		Recall that $S(p)$ contains no occurrences of $p$ of depth $0$,
		and there is no variable which occurs freely in $S(\top)$ and is bounded in $S(p)$.
		By the substitution lemma,
			\begin{align*}
			\QGL \prove \Box \left( \top \leftrightarrow S(\top) \right)
			\to \left( S(\top) \leftrightarrow S(S(\top)) \right).
			\end{align*}
		From this and (\ref{E1}), we obtain
		$\QGL \prove \Box S(\top) \to \left( S(\top) \leftrightarrow S(S(\top)) \right)$.
		Since the formula $S(p)$ is a $\Sigma$-formula, so are $S(\top)$ and $S(S(\top))$.
		By Lemma \ref{L8}, $S(\top)$ and $S(S(\top))$ are self-provers.
		By Lemma \ref{L10},
		$\QGL \prove S(\top) \leftrightarrow S(S(\top))$.

		\item Assume $S(p) \equiv A(p) \land B(p)$, and let $F$ and $G$ be $\mathcal{L}''$-formulas such that
		$\QGL \prove F \leftrightarrow A(F)$ and $\QGL \prove G \leftrightarrow B(G)$.
		First, we have $\QGL \prove (F \land G) \to \left( F \leftrightarrow (F \land G) \right)$.
		By the derivation in $\QGL$, we get
			\begin{equation}
	 		\QGL \prove \Box (F \land G) \to \Box \left( F \leftrightarrow (F \land G) \right).  \label{E2}
			\end{equation}
		Note that all free variables occurring in $F$ (or $G$) are free variables occurring in $A(p)$
		(or $B(p)$, resp.). 
		By our supposition ($\dag$), no free variable occurring in $F$ or $F \land G$
		is bounded in $S(p)$, i.e., bounded in $A(p)$.
		By the substitution lemma,
			\begin{align*}
			\QGL \prove \Box (F \leftrightarrow F \land G) \to
			\left( A(F) \leftrightarrow A(F \land G) \right). 
			\end{align*}
		From this and (\ref{E2}),
		$\QGL \prove \Box (F \land G) \to \left( A(F) \leftrightarrow A(F \land G) \right)$.
		By $\QGL \prove F \leftrightarrow A(F)$, we obtain
		$\QGL \prove \Box (F \land G) \to \left( F \leftrightarrow A(F \land G) \right)$. 
		Similarly, we can derive
		$\QGL \prove \Box (F \land G) \to \left( G \leftrightarrow B(F \land G) \right)$. Thus,
		$\QGL \prove \Box (F \land G) \to
		\left( F \land G \leftrightarrow A(F\land G) \land B(F\land G)\right)$, i.e.,
		$\QGL \prove \Box (F \land G) \to \left( F \land G \leftrightarrow S(F\land G) \right)$.

		We claim that $F$ and $G$ are self-provers. We show this only for $F$.
		Since $A(F)$ is a $\Sigma$-formula, by Lemma \ref{L8}, $A(F)$ is a self-prover, 
		and hence $\QGL \prove A(F) \to \Box A(F)$. By the induction hypothesis,
		$\QGL \prove F \leftrightarrow A(F)$, and hence $\QGL \prove \Box F \leftrightarrow \Box A(F)$.
		Thus $\QGL \prove F \to \Box F$.
		
		By Lemma \ref{L8}, $F \land G$ is a self-prover. Since $S(p)$ is a $\Sigma$-formula,
		and so is $S(F\land G)$. By Lemma \ref{L8}, $S(F\land G)$ is a self-prover.
		By Lemma \ref{L10}, $\QGL \prove F \land G \leftrightarrow S(F\land G)$.

		\item Assume $S(p) \equiv A(p) \lor B(p)$, and let $F$ and $G$ be $\mathcal{L}''$-formulas such that
		$\QGL \prove F \leftrightarrow A(F)$ and $\QGL \prove G \leftrightarrow B(G)$.
		First, we have $\QGL \prove F \to ( F \leftrightarrow F \lor G)$. Then
			\begin{equation}
			\QGL \prove \Box F \to \Box ( F \leftrightarrow F \lor G ). \label{E3}
			\end{equation}
		Note that all free variables occurring in $F$ (or $G$) are free variables occurring in $A(p)$
		(or $B(p)$, resp.). 
		By our supposition ($\dag$), every free variable occurring in $F$ or $F \lor G$
		is not bounded in $S(p)$, i.e., not bounded in $A(p)$.
		By the substitution lemma,
			\begin{equation*}
			\QKf \prove \Box (F \leftrightarrow F \lor G )
			\to \left( A(F) \leftrightarrow A(F \lor G) \right). 
			\end{equation*}
		From this and (\ref{E3}),
		$\QKf \prove \Box F \to \left( A(F) \leftrightarrow A(F \lor G) \right)$. By the induction hypothesis,
		$\QGL \prove \Box F \to \left( F \leftrightarrow A(F \lor G) \right)$.
		Note that $F$ and $A(F \lor G)$ are self-provers. By Lemma \ref{L10},
		$\QGL \prove F \leftrightarrow A(F \lor G)$. Similarly, we can derive
		$\QGL \prove G \leftrightarrow B(F \lor G)$. Thus
		$\QGL \prove F \lor G \leftrightarrow A(F \lor G) \lor B(F \lor G)$, i.e.,
		$\QGL \prove F \lor G \leftrightarrow S(F \lor G)$.

		\item Assume $S(p) \equiv \exists u A(u)$, and let $F$ be an $\mathcal{L}''$-formula such that
		$\QGL \prove F \leftrightarrow A(F)$. Since $\QGL \prove F \to (F \leftrightarrow \exists u F)$, 
		we have $\QGL \prove \Box F \to \Box (F \leftrightarrow \exists u F)$.
		Note that no free variable occurring in $F$ or $\exists u F$ is bounded in $A(p)$.
		By the substitution lemma,
		$\QGL \prove \Box F \to \left( A(F) \leftrightarrow A(\exists u F) \right)$.
		By the induction hypothesis,
		$\QGL \prove \Box F \to \left( F \leftrightarrow A(\exists u F) \right)$.
		Recall that $F$ and $\exists u F$ are self-provers. By Lemma \ref{L10},
		$\QGL \prove F \leftrightarrow A(\exists u F)$, and hence
		$\QGL \prove \exists u F \leftrightarrow \exists u A(\exists u F)$,
		i.e., $\QGL \prove \exists u F \leftrightarrow S( \exists u F)$.
		\end{itemize}
	\end{proof}

	\begin{lem}\label{L12}
	For any $\Sigma$-formulas $S_0(p_0, \ldots, p_n), \ldots, S_n(p_0, \ldots, p_n)$,
	there are $\mathcal{L}''$-formulas $F_0, \ldots, F_n$ satisfying the desired properties such that for any $i \leq n$,
	$\QGL \prove F_i \leftrightarrow S_i (F_0, \ldots, F_n)$.
	\end{lem}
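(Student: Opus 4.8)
The plan is to induct on $n$, reducing a system of $n+1$ simultaneous equations to a single application of Lemma \ref{L11} together with the induction hypothesis for $n$ equations. The base case $n = 0$ is exactly Lemma \ref{L11}, so assume the statement for systems indexed by $0, \ldots, n-1$ and let $S_0, \ldots, S_n$ be $\Sigma$-formulas in $p_0, \ldots, p_n$.

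First I would eliminate the last variable. Regarding $S_n$ as a $\Sigma$-formula in the single variable $p_n$ (with $p_0, \ldots, p_{n-1}$ treated as additional propositional parameters), Lemma \ref{L11} yields an $\mathcal{L}''$-formula $G$ not containing $p_n$, with only the predicate symbols, propositional variables and free variables occurring in $S_n$, such that
\[
\QGL \prove G \leftrightarrow S_n(p_0, \ldots, p_{n-1}, G).
\]
Substituting $G$ for $p_n$ in the remaining formulas, I set $T_i(p_0, \ldots, p_{n-1}) :\equiv S_i(p_0, \ldots, p_{n-1}, G)$ for each $i < n$. By the closure of $\Sigma$-formulas under substitution noted after Definition \ref{D2}, every $T_i$ is again a $\Sigma$-formula, now in the $n$ variables $p_0, \ldots, p_{n-1}$. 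The induction hypothesis applied to $T_0, \ldots, T_{n-1}$ then produces $\mathcal{L}''$-formulas $F_0, \ldots, F_{n-1}$, with the required containment properties, satisfying $\QGL \prove F_i \leftrightarrow T_i(F_0, \ldots, F_{n-1})$ for all $i < n$.

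Finally I would set $F_n :\equiv G(F_0, \ldots, F_{n-1})$, the result of substituting $F_0, \ldots, F_{n-1}$ for $p_0, \ldots, p_{n-1}$ in $G$. For $i < n$, unwinding $T_i$ gives $T_i(F_0, \ldots, F_{n-1}) \equiv S_i(F_0, \ldots, F_{n-1}, F_n)$, so $\QGL \prove F_i \leftrightarrow S_i(F_0, \ldots, F_n)$ at once. For $i = n$, I would substitute $F_0, \ldots, F_{n-1}$ for $p_0, \ldots, p_{n-1}$ throughout the displayed equation for $G$; since $\QGL$-provability is preserved under substitution of $\mathcal{L}''$-formulas for propositional variables, this gives $\QGL \prove F_n \leftrightarrow S_n(F_0, \ldots, F_{n-1}, F_n)$, which is the remaining equation. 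The containment conditions on $F_n$ are inherited from those on $G$ and the $F_i$.

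The step requiring the most care is the control of variable capture in these substitutions: replacing a propositional variable by $G$ or by one of the $F_i$ must never allow a free variable of the substituted formula to be captured by a quantifier, and this is exactly what assumption ($\dag$) and the stipulation that each constructed formula contains only free variables already occurring in the $S_j$ are meant to secure. Verifying that provability and the extended substitution lemma survive these substitutions is the principal bookkeeping obstacle, whereas the genuine fixed-point content is supplied entirely by Lemma \ref{L11}.
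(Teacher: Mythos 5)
Your proof is correct and follows essentially the same route as the paper's: an induction on the number of equations that peels off one unknown via Lemma \ref{L11} and handles the remaining system by the induction hypothesis, relying on closure of $\Sigma$-formulas under substitution and on uniform substitution preserving $\QGL$-provability. The only (immaterial) difference is the order of elimination --- the paper applies the induction hypothesis to the first $n+1$ equations with the last propositional variable kept as a parameter and then invokes Lemma \ref{L11} for the final equation, whereas you solve the final equation first and then apply the induction hypothesis to the substituted system.
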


	\begin{proof}
	We prove Lemma by the induction on $n$. If $n=0$, then it follows from Lemma \ref{L11}.
 
	Suppose that Lemma holds for $\leq n$. Let $S_0(p_0, \ldots, p_{n+1}), \ldots, S_{n+1}(p_0, \ldots, p_{n+1})$
	be $\Sigma$-formulas. By the induction hypothesis, there are $\mathcal{L}''$-formulas
		\begin{gather*}
		F_0(p_{n+1}), \ldots, F_n(p_{n+1})
		\end{gather*}
	such that for any $i \leq n$,
	$\QGL \prove F_i(p_{n+1}) \leftrightarrow S_i \left(F_0(p_{n+1}), \ldots, F_n(p_{n+1}), p_{n+1} \right)$.
	Let $F$ be an $\mathcal{L}'$-formula such that
	$\QGL \prove F \leftrightarrow S_{n+1} \left(F_0(F), \ldots, F_{n}(F), F \right)$.
	(The existence of such an $F$ is guaranteed by Lemma \ref{L11}.)
	Then for any $i \leq n$,
	$\QGL \prove F_i(F) \leftrightarrow S_i \left(F_0(F), \ldots, F_n (F), F \right)$.
	Therefore, $\langle F_0(F), \ldots, F_n(F), F \rangle$ are desired formulas. The proof of the case $n+1$ is completed.
	\end{proof}

Finally, we prove Theorem \ref{T4}. 

\begin{proof}[Proof of Theorem \ref{T4}]
Let $A(p)$ be a Boolean combination of $\Sigma$-formulas and formulas containing no occurrences of $p$. Then there are a propositional formula $B(q_0, \ldots, q_{n-1}, r_0, \ldots, r_{m-1}) $, $\Sigma$-formulas $S_0(p), \ldots, S_{n-1}(p)$, and $\mathcal{L}''$-formulas $R_0, \ldots, R_{m-1}$ containing no occurrences of $p$, such that
	\begin{equation*}
	A(p) \equiv B \left( S_0(p), \ldots, S_{n-1}(p), R_0, \ldots, R_{m-1} \right).
	\end{equation*}
For each $i < n$, put $C_i (q_0, \ldots, q_{n-1}) :\equiv S_i \left( B(q_0, \ldots, q_{n-1}, R_0, \ldots, R_{m-1}) \right)$. By Lemma \ref{L12}, there are $F_0, \ldots, F_{n-1}$ such that for each $i < n$,
$\QGL \prove F_i \leftrightarrow C_i \left( F_0, \ldots, F_{n-1} \right)$.
Let $F :\equiv B( F_0, \ldots, F_{n-1}, R_0, \ldots, R_{m-1})$. Then we have $\QGL \prove F_i \leftrightarrow S_i(F)$, and hence $\QGL \prove F \leftrightarrow B \left( S_0(F), \ldots, S_{n-1}(F), R_0, \ldots, R_{m-1} \right)$, i.e., $\QGL \prove F \leftrightarrow A(F)$.

\end{proof}

\begin{prob}
Is there a formula $A(p)$ satisfying the following conditions?
	\begin{itemize}
	\item $A(p)$ is modalized in $p$;
	\item $A(p)$ is not provably equivalent to any Boolean combination of $\Sigma$-formulas
	and formulas containing no occurrences of $p$:
	\item $A(p)$ has a fixed-point in $\QGL$.
	\end{itemize}
\end{prob}

\end{document}